
\documentclass{article}


\usepackage{amsmath,amsthm,amsfonts,amssymb}
\usepackage{bbold}


\usepackage{geometry}
 \geometry{
 a4paper,
 total={170mm,257mm},
 left=20mm,
 top=20mm,
 }

\usepackage{ifpdf,graphicx,subfig}
\ifpdf
  \DeclareGraphicsExtensions{.eps,.pdf,.png,.jpg}
\else
  \DeclareGraphicsExtensions{.eps}
\fi

\usepackage{color}
\usepackage[colorlinks=true]{hyperref}

\usepackage{mdframed}

\usepackage{booktabs}
\usepackage{algorithm,algorithmic}
\usepackage[sort&compress,numbers]{natbib}
\bibliographystyle{abbrv}
\setlength{\bibsep}{0pt plus 0.3ex}

\newcommand{\Rmnum}[1]{\expandafter\@slowromancap\romannumeral #1@}
\renewcommand{\labelenumi}{(\roman{enumi})}

\usepackage{siunitx}
\sisetup{output-exponent-marker=\ensuremath{\mathrm{e}}}

\theoremstyle{plain}
\newtheorem{theorem}{Theorem}[section]
\newtheorem{lemma}[theorem]{Lemma}
\newtheorem{proposition}[theorem]{Proposition}

\theoremstyle{definition}
\newtheorem{definition}{Definition}[section]

\theoremstyle{remark}
\newtheorem*{remark}{Remark}

\numberwithin{equation}{section}

\DeclareMathOperator{\sgn}{sgn}
\DeclareMathOperator{\supp}{supp}
\DeclareMathOperator{\dom}{dom}
\DeclareMathOperator{\dist}{dist}

\newcommand{\bfn}{\mathbf{n}}
\newcommand{\bfs}{\mathbf{s}}

\newcommand{\bfu}{\mathbf{u}}
\newcommand{\bfv}{\mathbf{v}}

\newcommand{\bfx}{\mathbf{x}}
\newcommand{\bfy}{\mathbf{y}}
\newcommand{\bfz}{\mathbf{z}}
\newcommand{\bfh}{\mathbf{h}}

\newcommand{\bfA}{\mathbf{A}}
\newcommand{\bfB}{\mathbf{B}}
\newcommand{\bfI}{\mathbf{I}}

\newcommand{\bbR}{\mathbb{R}}
\newcommand{\calE}{\mathcal{E}}
\newcommand{\calF}{\mathcal{F}}
\newcommand{\sfi}{\mathsf{i}}
\newcommand{\sfg}{\mathsf{g}}
\newcommand{\sfG}{\mathsf{G}}
\newcommand{\sfS}{\mathsf{S}}

\newcommand{\sfm}{\mathsf{m}}

\newcommand{\sfN}{\mathsf{N}}

\newcommand{\ve}{\varepsilon}

\newcommand{\set}[1]{\left\{#1\right\}}
\newcommand{\abs}[1]{\left\vert#1\right\vert}
\newcommand{\norm}[1]{\left\Vert#1\right\Vert}
\renewcommand{\vec}[1]{\mbox{\boldmath \small $#1$}}


\allowdisplaybreaks

\renewcommand{\abstract}[1]{\textbf{Abstract.}~#1}
\providecommand{\keywords}[1]{\textbf{Keywords.}~#1}
\providecommand{\subjclass}[2]{\textbf{Mathematics subject classification (2010).}~#1}

\begin{document}


\title{\bf An Efficient and Globally Convergent Algorithm for $\ell_{p,q}$-$\ell_{r}$ Model  in Group Sparse Optimization}
\author{Yunhua Xue,Yanfei Feng and
	Chunlin Wu\thanks{Corresponding author. wucl@nankai.edu.cn},
	\\
{\em School of Mathematical Sciences and LPMC, Nankai University, Tianjin {\rm 300071}, China}}

\date{\today}

\maketitle

\abstract{ Group sparsity combines the underlying sparsity and group structure of the data in problems. We develop a proximally linearized algorithm InISSAPL for the non-Lipschitz group sparse $\ell_{p,q}$-$\ell_r$ optimization problem. The algorithm gives a unified framework for all the parameters $p\geq 1,0<q<1,1\leq r\leq \infty$, which is applicable to different kinds of measurement noise. In particular, it includes the addition of the non-smooth $\ell_{1,q}$ regularization term and the non-smooth $\ell_1$/$\ell_{\infty}$ fidelity term as special cases. It allows an inexact inner loop accessible to the implementation of scaled ADMM, and still has global convergence. The algorithm is efficient and fast with computation only on the shrinking group support set. Many numerical experiments are presented for the algorithm with diversity of parameters $p,q,r$. The comparisons show that our algorithm is superior to others in the existing works.

}

\keywords{group sparse, $\ell_{p,q}$-$\ell_r$ model, non-Lipschitz optimization, Laplace noise, Gaussian noise, uniform distribution noise, lower bound theory, Kurdyka-\L ojasiewicz property}

\subjclass{49M05,~65K10,~90C26,~90C30}

\section{Introduction}
\label{sec:Intro}

We consider the following $\ell_{p,q}$-$\ell_{r}$ minimization problem
\begin{equation}\label{eq:lp-mini-prob}
	\min_{\bfx \in \bbR^{N}}
    \calE (\bfx):= \norm{\bfx}_{p,q}^{q} + F_r(\bfx),
\end{equation}
where
\begin{equation*}
	F_r(\bfx)=\left\{\begin{array}{ll}
\displaystyle\frac{1}{r\alpha} \norm{\bfA\bfx-\bfy}_r^r,&r\geq 1,\\
\displaystyle\frac{1}{\alpha} \norm{\bfA\bfx-\bfy}_{\infty},&r=\infty,
\end{array}\right.
\end{equation*}
and $ p \in[1,\infty)$, $q \in (0,1) $,$r\in [1,\infty]$, $ \alpha \in( 0, \infty)$, $\bfA \in \bbR^{M  \times N}$, $\bfx \in \bbR^{N} $, $\bfy \in \bbR^{M}$, the $\ell_{p,q}$ regularization term measures the group sparse structure of $\bfx$, which is a quasi-norm, defined by
$$\norm{\bfx}_{p,q}= \left(\sum_{\sfi=\sf1}^{\sfg}\norm{\bfx_{\sfi}}_{p}^{q}\right)^{1/q},$$
where $\bfx_{\sfi},\sfi=\sf1,\cdots,\sfg$ are the group members defined in Section \ref{sec:Pre} and $\|\cdot\|_p$ is the standard $L^p$ norm for vectors.

In Big Data era, data used to describe the structures, segments and features always have group property. Namely, they have a natural grouping of their components. Sparsity allows us to reconstruct high-dimensional data with only a small number of variables, leading to better recovery performance. By combining them, the recovery or reconstruction of group sparse data is enhanced to an active research topic in sparse optimization. The group sparse minimization problem~\eqref{eq:lp-mini-prob} by underdetermined linear measurements has a wide variety of applications, such as signal recovery~\cite{Deng2013GSO, HuYaohuaetal2017GSO}, image processing~\cite{q-wang2017}, compressed sensing~\cite{Usman2011MRI}, model selection in birth weight prediction~\cite{Yuan2006Group}, sparse learning~\cite{Yang2010ICML}, variable selection in gene finding~\cite{Meier2008JRSS} and so on. Therefore, it is meaningful to study efficient algorithms for this general group sparse optimization problem.

The \emph{general} means that it covers a lot of case models for different parameters $p,q,r$. We assume the observation
\[\bfy = \bfA \bfx + \bfn,\]
where $\bfn \in \bbR^{M}$ represents the noise. The model here can be adapted for the diversity of noise by the parameter $r$ in the data fitting term $F_r(\bfx)$. As well known, for Gaussian noise, people use the $\ell_{2}$ fidelity term ($r=2$). For Laplace noise or heavy-tailed noise such as impulsive noise, the $\ell_{1}$ fidelity term ($r=1$) is a good choice. For the noise by uniform distribution or quantization error, the $\ell_{\infty}$ fidelity term ($r=\infty$) suits.


There are many references to study the sparse optimization problem without group structure in it, i.e. the non-group model in which the number of groups $\sfg$ equals $N$. Then the $\ell_{p,q}$ term in ~\eqref{eq:lp-mini-prob} is degenerated to $\ell_q~(0<q<1)$ regularization one. One class of methods is smoothing approximate methods~\cite{Bian-Chen2013,Chen2010Lower,Chen2013Optimality,Lanza2015Generalized,Chan2014Half}. By a smoothing function $\varphi(x,\theta)$, the non-Lipschitz property of the objective function can be removed. The second class of methods is general iterative shrinkage-thresholding algorithms~(GISA) for $\ell_{q}$-$\ell_{2}$ problem~\cite{Xu2012L12,Zuo2013Generalized,Bredies2015Minimization}. GISA was inspired by the great success of soft thresholding and iterative shrinkage-thresholding algorithms (ISTA)~\cite{Daubechies2004iterative,Beck2009fast} for convex $\ell_{1}$-$\ell_{2}$ problem. The third class of methods is the iterative reweighted minimization methods for $\ell_{q}$-$\ell_{2}$ minimization problem; see, e.g.~\cite{Lai2013Improved,Candes2008Enhancing,Chen2014Convergence,Lu2014}. Actually reweighted methods reformulate the original non-Lipschitz $\ell_{q}$-$\ell_{2}$ to Lipschitz ones by a de-singularizing parameter. Very recently, \cite{Zhifangliuaxiv,zeng2018} developed methods by successively shrinking the support of the variables to overcome non-Lipschitz property, in which \cite{Zhifangliuaxiv} considered the non-group case with $r\neq \infty$ and \cite{zeng2018} focused on the image restoration with $r=2$. To the best of our knowledge, we note that most of the references considered only $r=2$ in these methods.

For the group sparse optimization problem \eqref{eq:lp-mini-prob}, most algorithms were proposed only in the case of $r= 2$ as well. Hu et al. \cite{HuYaohuaetal2017GSO} investigated this problem via $\ell_{p,q}$ regularization, others developed algorithms for $\ell_{2,1}$ regularized least squares, e.g. group Lasso \cite{Yuan2006Group, Boyd2011Distributed,Deng2013GSO}. As noted before, it is important and necessary to develop algorithm for general $1\leq r\leq \infty$. This will bring the difficulty to universally handle the noise parameter $r$ with regularized parameters $p,q$ in the group structure. In addition, the regularization term $\ell_{p,q}$ with parameters $p\geq1, 0<q<1$ in the objective function $\calE$ in~\eqref{eq:lp-mini-prob} leads to a non-convex, non-Lipschitz optimization problem. This non-smoothness becomes even serious for the $\ell_{1,q}$ regularization case. All these characteristics of the minimization model \eqref{eq:lp-mini-prob} result in a great challenge to solve it.

In this paper, we extend our recent work \cite{Zhifangliuaxiv} to solve the general group sparse optimization problem (\ref{eq:lp-mini-prob}). This extension is not trivial, because model (\ref{eq:lp-mini-prob}) is more complicated and includes more nonsmooth cases than the non-group one in \cite{Zhifangliuaxiv}, as mentioned in the former paragraph.
We firstly establish a motivating proposition by developing subdifferential lemmas in group variables. This gives us the rationality to design a unified iterative support shrinking algorithm over group support set of unknown variables for various $p,q,r$.
To make the algorithm more practical and easily implementable, we linearize the regularization term and present the InISSAPL algorithm to calculate the approximate solution. Although the algorithm allows an inexact inner loop, we prove its global convergence from a new lower bound theory for the $\ell_p$ norm of the nonzero groups of iteration sequence. The algorithm implementation by scaled
ADMM is also discussed where, especially for the case of $r=\infty$, we give an analytical derivation of the explicit
solution of the corresponding subproblem. Numerical experiments show that the algorithm is not only robust to the diversity of noise, but also has good performance for different $p,q$. Compared with others in group sparse optimization on relative errors, successful rates and running time, our algorithm outperforms them. The main
characters of InISSAPL algorithm for model (\ref{eq:lp-mini-prob}) are presented as follows,

\begin{enumerate}
\item The algorithm provides a unified framework for all the parameters $p,q,r$. It can particularly deal with the case of the addition of non-smooth $\ell_{1,q}$ regularization term and non-smooth $\ell_{1}$/$\ell_{\infty}$ fidelity term.
\item The computation is implemented only on the shrinking group support set of $\bfx$ at each iteration step. Naturally our algorithm is efficient, especially for large scale sparse recovery problems.
\item  The key step is to overcome the non-Lipschitz property of the objective function and construct an appropriate subdifferential formula, when using KL property to prove the global convergence of the algorithm. It is solved by developing a lower bound theory of the nonzero groups of the iterative sequence and a technical construction of the subdifferential; see section 4 for details.
\end{enumerate}

The rest of the paper is outlined as follows. In section~\ref{sec:Pre}, we give some basic notations and preliminaries. In section~\ref{sec:algo}, we give the motivating proposition and propose the corresponding algorithms. In section~\ref{sec:con-ana}, we establish the global convergence theorem for the proposed algorithms. In section~\ref{sec:imlpem}, we describe the implementation of the algorithm by scaled ADMM. Numerical experiments and comparisons are showed in section~\ref{sec:experiments}. Section~\ref{sec:conclusion} concludes the paper.

\section{Notations and preliminaries}
\label{sec:Pre}
Suppose that $\bfA$ is an $M\times N$ matrix and $\bfx$ is a column vector with $N$ components. $I = \set{1,2,\dots, M }$ denotes the row index set of $\bfA$. To be specialized, we use another kind of upright font to express the group index such as $\sfG,\sfi,\sfg$. Let $\bfx:=\left(\bfx_{\sf1}^T,\bfx_{\sf2}^T,\cdots,\bfx_{\sfg}^T\right)^T$ represent the group structure of $\bfx$. $\sfG =\set{\sf1,\sf2,\dots,\sfg}$ denotes the group index set of $\bfx$. For each group member $\bfx_{\sfi}$, we denote by $J_{\sfi}=\{1,2,\cdots,N_{\sfi}\}$ the index set, then $N=N_{\sf1}+\cdots + N_{\sfg}$.
We also refer to $\bfx_{\sfi,j}$ as its $j$th entry of $\bfx_{\sfi}$ and denote the group support set of $\bfx$ by
\[
\supp_{\sfG}(\bfx):=\set{\sfi \in \sfG: \bfx_{\sfi} \neq \bf0},
\]
where $\bfx_{\sfi}\neq \bf0$ means that $\bfx_{\sfi,j}\neq 0$ for some $j\in J_{\sfi}$. Furthermore, we use $\bfx_{\sfi}=\bf0$ when $\bfx_{\sfi,j}=0$ for all $j\in J_{\sfi}$. The support of group member $\bfx_{\sfi}$ is defined by
\[
\supp(\bfx_{\sfi})=\set{j\in J_{\sfi}: \bfx_{\sfi,j}\neq 0}.
\]

Let $\sfS$ be a subset of $\sfG$.
We denote by $\bfx_{\sfS}$ the group vectors of $\bfx$  indexed by $\sfS$, which consists of the nonzero group members of $\bfx$ when $\sfS = \supp_{\sfG}(\bfx)$.

For a matrix $\bfA \in \bbR^{M \times N}$, we partition it into submatrices $A_{k,\sfi}, k\in I, \sfi\in \sfG$, which is the $k$th row of $\bfA$ partitioned according to the group structure of $\bfx$, i.e.,
\[
\bfA =
\left[
\begin{array}{cccc}
A_{1,\sf1}&A_{1,\sf2}&\cdots &A_{1,\sfg}\\
  \cdots&\cdots&\cdots&\cdots \\
  A_{M,\sf1}&A_{M,\sf2}&\cdots&A_{M,\sfg}\\
\end{array}
\right].
\]
Because $A_{k,\sfi}, k\in I, \sfi\in \sfG$ are row vectors, we denote by $(A_{k,\sfi})_j$ the $j$-th entry of it.
In a similar way with $\bfx_{\sfS}$, we denote by $\bfA_{\sfS}$ the column sub-matrix of $\bfA$ consisting of the columns indexed by $\sfS$.

Define $\phi: [0,\infty) \to [0,\infty) $ by $\phi(x) = x^{q} (0 < q < 1)$. We state some useful properties for $\phi(\cdot)$.
\begin{proposition}\label{prop-phi}
The function $\phi(\cdot)$ has the following properties:
\begin{enumerate}
\item $\phi(0) = 0$ and $\phi^{\prime}(x) = q x^{q-1}  > 0$ on $(0,\infty)$.
\item  $\phi(x)$ is concave and the following inequality holds,
\begin{equation}\label{eq:first-approx}
	\phi(y) \leq \phi(x) + \phi^{\prime}(x)(y - x),\; \forall x\in (0,\infty), y \in [0,\infty).
\end{equation}
\item For any $c > 0$, $\phi^{\prime}(x)$ is $ L_{c}$-Lipschitz continuous on $[c,\infty)$, i.e., there exists a constant $ L_{c} > 0$ determined by $c$, such that $\forall x,y \in [c, \infty)$,
\begin{equation}\label{eq:grad-Lip-cond}
	\abs{\phi^{\prime}(x) - \phi^{\prime}(y)} \leq L_{c} \abs{x-y}.
\end{equation}
\end{enumerate}
\end{proposition}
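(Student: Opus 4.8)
The plan is to dispatch all three items by elementary one–variable calculus, exploiting that $\phi(x)=x^{q}$ is $C^{\infty}$ on $(0,\infty)$ with explicit derivatives. For item (i) I would simply record $\phi(0)=0^{q}=0$, differentiate by the power rule to obtain $\phi'(x)=qx^{q-1}$ on $(0,\infty)$, and note that $q>0$ and $x^{q-1}>0$ force $\phi'(x)>0$ there.

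For item (ii), the plan is to compute $\phi''(x)=q(q-1)x^{q-2}$ and observe that $0<q<1$ makes $q(q-1)<0$, so $\phi''<0$ on $(0,\infty)$; hence $\phi$ is (strictly) concave on $(0,\infty)$, and concavity extends to $[0,\infty)$ by continuity of $\phi$. To get the tangent-line estimate \eqref{eq:first-approx} I would first treat $x,y\in(0,\infty)$ via the standard chord argument: for $t\in(0,1)$, concavity gives $\phi\big(x+t(y-x)\big)\ge(1-t)\phi(x)+t\phi(y)$, so $\tfrac{1}{t}\big(\phi(x+t(y-x))-\phi(x)\big)\ge\phi(y)-\phi(x)$, and letting $t\to0^{+}$ yields $\phi'(x)(y-x)\ge\phi(y)-\phi(x)$. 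The boundary case $y=0$ needs a separate line: since $x+t(y-x)=(1-t)x$ stays in $(0,\infty)$, the same limit argument applies; alternatively one checks directly that $\phi(x)+\phi'(x)(0-x)=x^{q}-qx^{q}=(1-q)x^{q}\ge0=\phi(0)$.

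For item (iii), the plan is to bound $\phi''$ on $[c,\infty)$: one has $\abs{\phi''(x)}=q(1-q)x^{q-2}$, and because the exponent $q-2<0$, the map $x\mapsto x^{q-2}$ is nonincreasing, so $\abs{\phi''(x)}\le q(1-q)c^{q-2}=:L_{c}$ for all $x\ge c$. Then for any $x,y\in[c,\infty)$ the mean value theorem gives $\phi'(x)-\phi'(y)=\phi''(\xi)(x-y)$ for some $\xi$ between $x$ and $y$, hence $\xi\in[c,\infty)$ and $\abs{\phi'(x)-\phi'(y)}\le L_{c}\abs{x-y}$. None of the steps is a genuine obstacle; the only places calling for a little care are the treatment of the boundary value $y=0$ in \eqref{eq:first-approx} — handled either by the limiting argument above or by the one-line direct check — and the explicit identification of $L_{c}=q(1-q)c^{q-2}$, which relies precisely on the monotonicity of $x^{q-2}$ when $q<1$.
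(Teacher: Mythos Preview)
Your proposal is correct in every detail. The paper states this proposition without proof (treating it as an elementary collection of facts about $x\mapsto x^{q}$), so your argument supplies exactly the routine calculus verification the authors omit; in particular your explicit constant $L_{c}=q(1-q)c^{q-2}$ and your handling of the boundary case $y=0$ are both sound.
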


\begin{lemma}\label{lemma:norm}Let $\bfy\in \mathbb{R}^m$ be the m-dimensional vector, the following inequality holds:
$$\norm{\bfy}_{\gamma_2}\leq \norm{\bfy}_{\gamma_1}, 0< \gamma_1\leq \gamma_2.$$

\end{lemma}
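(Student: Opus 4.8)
The plan is to reduce the inequality to the elementary scalar fact that for $t\in[0,1]$ and exponents $0<\gamma_1\le\gamma_2$ one has $t^{\gamma_2}\le t^{\gamma_1}$, applied after rescaling $\bfy$ so that its $\gamma_1$-quasi-norm equals $1$. The only structural fact about $\norm{\cdot}_\gamma$ that will be used is positive homogeneity, $\norm{c\bfy}_\gamma=\abs{c}\,\norm{\bfy}_\gamma$, which holds for every $\gamma>0$; in particular the argument does not care whether $\gamma_1,\gamma_2$ are $\geq1$ or in $(0,1)$.

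First I would dispose of the degenerate situation: if $\bfy=\mathbf{0}$, both sides vanish and there is nothing to prove, so assume $\bfy\neq\mathbf{0}$. If $\gamma_2=\infty$, the claim $\norm{\bfy}_\infty=\max_j\abs{y_j}\le\big(\sum_j\abs{y_j}^{\gamma_1}\big)^{1/\gamma_1}$ is immediate since each $\abs{y_j}^{\gamma_1}$ is one nonnegative summand; so it suffices to treat $\gamma_2<\infty$. Next, set $t_j:=\abs{y_j}/\norm{\bfy}_{\gamma_1}$ for $j=1,\dots,m$, so that $\sum_{j=1}^m t_j^{\gamma_1}=1$, and consequently $t_j\in[0,1]$ for every $j$. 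Because $\gamma_2\ge\gamma_1$ and $t_j\le1$, each term obeys $t_j^{\gamma_2}\le t_j^{\gamma_1}$; summing over $j$ gives $\sum_{j=1}^m t_j^{\gamma_2}\le\sum_{j=1}^m t_j^{\gamma_1}=1$. Raising to the power $1/\gamma_2$ yields $\norm{(t_1,\dots,t_m)}_{\gamma_2}\le1$, and multiplying through by $\norm{\bfy}_{\gamma_1}$ via positive homogeneity recovers $\norm{\bfy}_{\gamma_2}\le\norm{\bfy}_{\gamma_1}$.

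I do not anticipate any genuine obstacle here: the argument is a one-line normalization plus a termwise monotonicity estimate. The only two places needing a word of care are that the rescaling step requires $\bfy\neq\mathbf{0}$ (handled by treating the zero vector separately) and the endpoint $\gamma_2=\infty$ (handled by the direct bound above). This monotonicity is exactly the tool invoked later when comparing $\norm{\cdot}_p$ for different values of $p$ inside the group terms $\norm{\bfx_{\sfi}}_p$.
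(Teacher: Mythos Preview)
Your argument is correct and complete. It is, however, a genuinely different route from the paper's own proof. The paper simply sets $f(t)=\norm{\bfy}_t$ for $t>0$ and asserts that $f$ is monotone decreasing because $f'(t)<0$; no further detail is given. Your approach instead normalizes so that $\norm{\bfy}_{\gamma_1}=1$, observes that each rescaled coordinate lies in $[0,1]$, and then applies the elementary scalar inequality $t^{\gamma_2}\le t^{\gamma_1}$ term by term.

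Your proof is more elementary and self-contained: it avoids differentiating the somewhat awkward function $t\mapsto\bigl(\sum_j\abs{y_j}^t\bigr)^{1/t}$, and it handles the endpoint $\gamma_2=\infty$ and the degenerate case $\bfy=\mathbf{0}$ explicitly. In fact the paper's bare assertion that $f'(t)<0$ is not literally true in all cases (for instance when $\bfy$ has a single nonzero entry, $f$ is constant), whereas your argument gives the nonstrict inequality directly without any such caveat. The paper's approach, on the other hand, is shorter to state and signals that the monotonicity is differentiable in $t$, which could in principle give quantitative control; but for the use made of this lemma later (a single comparison of $\norm{\cdot}_{p-1}$ versus $\norm{\cdot}_p$), your version is entirely sufficient and arguably cleaner.
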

\begin{proof}
Let $f(t)=\|\bfy\|_t, ~t>0$, then $f(t)$ is monotone decreasing by the fact $f^{\prime}(t)<0$ for $t>0$.

\end{proof}

\begin{lemma}\label{lemma:norm-equivalence}
Let $s>0, \bfy\in \mathbb{R}^m$, then there exists constant $C_s>0$, such that,
\begin{equation*}
\norm{\bfy}_s\leq C_s\norm{\bfy}_{s+1}.
\end{equation*}
\end{lemma}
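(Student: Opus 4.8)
The plan is to prove the stronger, fully explicit inequality $\norm{\bfy}_s \le m^{1/(s(s+1))}\,\norm{\bfy}_{s+1}$, from which the statement follows with $C_s = m^{1/(s(s+1))}$. It is worth noting that this is exactly the reverse direction of Lemma~\ref{lemma:norm} (which gives $\norm{\bfy}_{s+1}\le\norm{\bfy}_s$), so the two lemmas together express that the $\ell_s$ and $\ell_{s+1}$ (quasi-)norms are equivalent on $\bbR^m$ with dimension-dependent constants, which is presumably why both are recorded here.

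The case $\bfy = \bf0$ is trivial, so assume $\bfy \neq \bf0$. First I would reduce to a normalized situation: by positive homogeneity of both $\norm{\cdot}_s$ and $\norm{\cdot}_{s+1}$ it suffices to prove $\norm{\bfy}_s \le m^{1/(s(s+1))}$ under the assumption $\norm{\bfy}_{s+1}=1$. The key step is then to apply Hölder's inequality in summation form to the product $\sum_{i=1}^m \abs{y_i}^s\cdot 1$ with the conjugate exponent pair $p = (s+1)/s$ and $p' = s+1$; since $p,p' > 1$ for every $s>0$, this is valid even when $0<s<1$, where $\norm{\cdot}_s$ is only a quasi-norm (which is precisely why one cannot simply invoke ``all norms on $\bbR^m$ are equivalent''). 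Hölder's inequality gives $\sum_i \abs{y_i}^s \le \big(\sum_i \abs{y_i}^{s+1}\big)^{s/(s+1)}\, m^{1/(s+1)} = m^{1/(s+1)}$, and raising both sides to the power $1/s$ yields $\norm{\bfy}_s \le m^{1/(s(s+1))}$, which completes the argument.

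If one is content with a cruder constant, there is an even more elementary route: once $\norm{\bfy}_{s+1}=1$ we have $\abs{y_i}\le 1$, hence $\abs{y_i}^s\le 1$, for each $i$, so $\sum_i \abs{y_i}^s \le m$ and therefore $\norm{\bfy}_s \le m^{1/s}$, proving the lemma with $C_s = m^{1/s}$. I do not anticipate any genuine obstacle in this proof; the only point that deserves a moment's care is the regime $0<s<1$, where the triangle inequality fails for $\norm{\cdot}_s$, but both arguments above are insensitive to that since they work directly with the sums $\sum_i\abs{y_i}^s$.
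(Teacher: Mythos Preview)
Your argument is correct. The H\"older step with exponents $p=(s+1)/s$ and $p'=s+1$ is valid for every $s>0$, and the resulting constant $C_s=m^{1/(s(s+1))}$ is in fact sharp (equality when all $|y_i|$ are equal). The cruder bound $m^{1/s}$ is also fine.

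The paper's own proof takes a different and less direct route: it splits into the two regimes $s\ge 1$ and $0<s<1$. For $s\ge 1$ it simply appeals to the equivalence of all norms on a finite-dimensional space, with an unspecified constant. For $0<s<1$ it invokes an external result (Lemma~1 of \cite{HuYaohuaetal2017GSO}) giving $\norm{\bfy}_s\le m^{1-2^{-Z}}\norm{\bfy}_2$ for a certain integer $Z$, and then applies norm equivalence once more to pass from $\norm{\cdot}_2$ to $\norm{\cdot}_{s+1}$. Your approach is more elementary and more informative: it is a single uniform argument for all $s>0$, requires no external citation, and delivers the optimal explicit constant, whereas the paper's proof yields only an existence statement with an implicit, generally larger constant.
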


\begin{proof}
For $s\geq 1$, the result can be verified easily from the norm equivalence in finite dimensional space.  For $0<s<1$, from \cite[Lemma 1]{HuYaohuaetal2017GSO}, we have
\begin{equation*}
\norm{\bfy}_s\leq m^{1-2^{-Z}} \norm{\bfy}_2,
\end{equation*}
where $Z$ is the smallest integer such that $2^{Z-1}s\geq 1$. We use the norm equivalence once again to have
\begin{equation*}
\norm{\bfy}_s\leq C_s \norm{\bfy}_{s+1},
\end{equation*}
where $C_s=m^{1-2^{-Z}}\cdot C$, and $C$ is the relation coefficient of norm equivalence.
\end{proof}

\section{Motivation and the proposed algorithm}
\label{sec:algo}

\subsection{Subdifferentials and regularity}

By the definition of $\phi(\cdot)$, we have
$\norm{\bfx}_{p,q}^{q}=\sum_{\sfi \in \sfG}\phi(\norm{\bfx_{\sfi}}_p)$. We also define the norm function $g(\bfy)=\norm{\bfy}_p$ for a vector $\bfy$. In order to calculate the subdifferential of the object function $\mathcal{E}(\bfx)$ in \eqref{eq:lp-mini-prob}, we give two lemmas firstly.

\begin{lemma}[Subdifferential]\label{lem-subdifferential}
Let $\bfy\in\mathbb{R}^m$ be an $m$-dimensional vector, we have the following results,
\begin{enumerate}
\item For $\bfy=\bf0$ and $p\geq 1$, the subdifferential is,
$$\partial (\phi \circ g)(\bfy)=\prod_{j=1}^m S_j,$$
where $S_j=(-\infty,\infty),\forall j=1,2,\cdots,m$ and $\Pi$ means the Cartesian product of sets;
\item For $\bfy\neq \bf0$, the subdifferential would be
$$\partial (\phi \circ g)(\bfy)=\prod_{j=1}^m S_j,$$
where
\[
     S_j =
       \begin{cases}
         \phi^{\prime}(\norm{\bfy}_p)\norm{\bfy}_p^{1-p}|\bfy_{j}|^{p-1}\sgn(\bfy_{j}), & p> 1,\\
         \phi^{\prime}(\norm{\bfy}_1)\sgn(\bfy_{j}),& j\in \supp(\bfy) \mbox{ and } p=1,\\
         [-\phi^{\prime}(\norm{\bfy}_1),\phi^{\prime}(\norm{\bfy}_1)], &  j\notin \supp(\bfy) \mbox{ and } p=1.\\
       \end{cases}
\]
\end{enumerate}
\end{lemma}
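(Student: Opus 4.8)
The plan is to split into the three regimes that appear in the statement, because $h:=\phi\circ g\colon\bfy\mapsto\norm{\bfy}_p^q$ is genuinely non‑Lipschitz at the origin, continuously differentiable away from the origin when $p>1$, and only a smooth function of a convex one when $p=1$. Throughout I would work with the limiting (Mordukhovich) subdifferential $\partial$, using that it contains the Fr\'echet subdifferential $\hat\partial$ and coincides with it at points of regularity, and that it is positively homogeneous.

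\emph{The origin.} Here $h(\mathbf{0})=0$ and $h(\bfy)=\norm{\bfy}_p^q\ge 0$. For an arbitrary $\bfv\in\mathbb{R}^m$, Cauchy--Schwarz and norm equivalence on $\mathbb{R}^m$ (there is $c>0$ with $\norm{\bfy}_p\ge c\norm{\bfy}_2$) give
\[
\liminf_{\bfy\to\mathbf{0}}\frac{h(\bfy)-h(\mathbf{0})-\langle\bfv,\bfy\rangle}{\norm{\bfy}_2}
\ \ge\ \liminf_{\bfy\to\mathbf{0}}\Big(\frac{\norm{\bfy}_p^{q}}{\norm{\bfy}_2}-\norm{\bfv}_2\Big)
\ \ge\ \liminf_{\bfy\to\mathbf{0}}\big(c^{q}\norm{\bfy}_2^{q-1}-\norm{\bfv}_2\big)=+\infty ,
\]
since $q-1<0$. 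Hence $\bfv\in\hat\partial h(\mathbf{0})$ for every $\bfv$, so $\hat\partial h(\mathbf{0})=\mathbb{R}^m$, and as $\hat\partial h(\mathbf{0})\subseteq\partial h(\mathbf{0})$ we get $\partial h(\mathbf{0})=\mathbb{R}^m=\prod_{j=1}^m(-\infty,\infty)$, which is item~(i).

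\emph{The case $\bfy\ne\mathbf{0}$, $p>1$.} For $p>1$ the scalar map $t\mapsto\abs{t}^p$ is $C^1$ with derivative $p\abs{t}^{p-1}\sgn(t)$, so $g(\bfy)=\big(\sum_j\abs{\bfy_j}^p\big)^{1/p}$ is $C^1$ on $\mathbb{R}^m\setminus\set{\mathbf{0}}$ with $\partial g/\partial\bfy_j=\norm{\bfy}_p^{1-p}\abs{\bfy_j}^{p-1}\sgn(\bfy_j)$. Since $\norm{\bfy}_p>0$ and $\phi$ is $C^1$ on $(0,\infty)$, the classical chain rule makes $h$ differentiable at $\bfy$ with $\nabla h(\bfy)=\phi'(\norm{\bfy}_p)\nabla g(\bfy)$; a differentiable function satisfies $\partial h(\bfy)=\set{\nabla h(\bfy)}$, which is exactly the stated product with $S_j=\phi'(\norm{\bfy}_p)\norm{\bfy}_p^{1-p}\abs{\bfy_j}^{p-1}\sgn(\bfy_j)$.

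\emph{The case $\bfy\ne\mathbf{0}$, $p=1$.} Now $g=\norm{\cdot}_1$ is convex and Lipschitz, with the familiar subdifferential $\partial g(\bfy)=\prod_j T_j$, $T_j=\set{\sgn(\bfy_j)}$ for $j\in\supp(\bfy)$ and $T_j=[-1,1]$ otherwise, while $\phi$ is $C^1$ on a neighbourhood of $\norm{\bfy}_1>0$ with $\phi'(\norm{\bfy}_1)>0$. I would invoke the chain rule for the limiting subdifferential of a composition of a strictly differentiable outer function with a locally Lipschitz inner one: strict differentiability of $\phi$ at $\norm{\bfy}_1$ makes it hold with equality, so $\partial h(\bfy)=\partial\big(\phi'(\norm{\bfy}_1)\,g\big)(\bfy)=\phi'(\norm{\bfy}_1)\,\partial g(\bfy)$, the last step by positive homogeneity since $\phi'(\norm{\bfy}_1)>0$. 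Substituting the $T_j$ gives the last two cases of the formula. (Alternatively, localize: near $\bfy$ the sign pattern is frozen, $\norm{\bfz}_1=\langle\mathbf{s},\bfz_A\rangle+\norm{\bfz_B}_1$ with $A=\supp(\bfy)$ and $\mathbf{s}=\sgn(\bfy_A)$, and one computes $\hat\partial h(\bfy)$ from the definition and checks it is closed, hence equals $\partial h(\bfy)$.)

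The main obstacle is the $p=1$, $\bfy\ne\mathbf{0}$ regime: one must ensure the subdifferential chain rule yields an \emph{equality}, not merely an inclusion — this is precisely what the $C^1$‑ness (strict differentiability) of $\phi$ at the positive value $\norm{\bfy}_1$ provides — and one must treat the origin by hand, since there $h$ is not locally Lipschitz, Clarke‑type calculus does not apply, and the Fr\'echet estimate above is needed.
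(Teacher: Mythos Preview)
Your proposal is correct. Parts (i) and the $p>1$ case of (ii) follow exactly the paper's route: at the origin you compute the Fr\'echet subdifferential directly via norm equivalence and $q<1$, and for $p>1$ you use that $\phi\circ g$ is $C^1$ away from the origin.

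The genuine difference is the $p=1$, $\bfy\neq\mathbf{0}$ case. The paper proceeds by hand: it first pins down $\widehat{\partial}(\phi\circ g)(\bfy)$ via two inclusions --- one by taking coordinate-direction limits, the other by building smooth minorants $h(\bfz)=\big(\sum_{j\in\supp(\bfy)}|\bfz_j|+\sum_{j\notin\supp(\bfy)}k_j\bfz_j\big)^q$ with $k_j\in[-1,1]$ and invoking \cite[Proposition~8.5]{Rockafellar2009Variational} --- and then shows $\partial=\widehat{\partial}$ by a sequential argument. You instead invoke the chain rule $\partial(\phi\circ g)(\bfy)=\phi'(\norm{\bfy}_1)\,\partial g(\bfy)$, justified by strict differentiability of $\phi$ at $\norm{\bfy}_1>0$ together with local Lipschitzness of $g=\norm{\cdot}_1$; since $\phi'(\norm{\bfy}_1)>0$ and $\phi$ is locally $C^1$-invertible, the rule holds with equality, and the convex subdifferential of $\norm{\cdot}_1$ gives the result immediately. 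Your argument is shorter and more conceptual, at the cost of relying on a nontrivial calculus rule that you should cite precisely (or derive in two lines from the definition, using that $\phi(g(\bfz))-\phi(g(\bfy))=\phi'(\norm{\bfy}_1)(g(\bfz)-g(\bfy))+o(\norm{\bfz-\bfy})$ because $g$ is Lipschitz). The paper's argument is more self-contained and, as a byproduct, establishes $\partial=\widehat{\partial}$ explicitly, which it reuses immediately afterwards in the regularity Lemma~\ref{lemma:regular}; your route does not produce this identity directly, so if you need regularity later you would have to argue it separately.
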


\begin{proof}
For brevity, denote the set $\prod_{j=1}^m S_j$ by $S$. In \textit{(i)}, let $\bfu\in \widehat{\partial}(\phi\circ g)(\bfy) $, which is the regular subdifferential at $\bfy=\bf0$. By the definition,
\[
\liminf\limits_{\substack{\bfz\to \bf0 \\ \bfz\neq\bf0}}\frac{\norm{\bfz}_p^q-<\bfu,\bfz-\bf0>}{\norm{\bfz-\bf0}_2}\geq 0.
\]
From the equivalence of norms when $p\geq 1$, we have
$$\norm{\bfz}_p\geq C\norm{\bfz}_2,$$
where $C>0$ is a constant. It is sufficient to have
\[
\frac{\norm{\bfz}_p^q-<\bfu,\bfz-\bf0>}{\norm{\bfz-\bf0}_2}\geq \frac{C^q\norm{\bfz}_2^q-<\bfu,\bfz>}{\norm{\bfz}_2}\geq 0,\hspace{2mm} \bfz\to\bf0.
\]
This is true for any $\bfu \in S$ due to $0<q<1$. Then the proof is finished by the fact that $\widehat{\partial}(\phi\circ g)(\bfy)\subseteq \partial (\phi\circ g)(\bfy)$.

In \textit{(ii)},~for $p>1$, the function $(\phi\circ g)(\bfy)$ is continuously differential at $\bfy$, so the subdifferential is the gradient in this case. For $p=1$, we show that $S=\widehat{\partial}(\phi\circ g)(\bfy)$ firstly.
~On one hand, let $\bfu\in \widehat{\partial}(\phi\circ g)(\bfy) $ and $\bfy\neq \bf0$, the limit inferior hold along the special direction,
\[
\liminf\limits_{\substack{\bfz_k=\bfy_{k},k\neq j \\ \bfz_j\to \bfy_{j}\\ \bfz\neq\bfy}}\frac{\norm{\bfz}_1^q-\norm{\bfy}_1^q-<\bfu,\bfz-\bfy>}{\norm{\bfz-\bfy}_2}\geq 0.
\]
Then we have
\[ \begin{cases}
(\bfu)_j=\phi^{\prime}(\norm{\bfy}_1)\cdot \sgn(\bfy_{j}),&j\in \supp(\bfy),\\
\abs{(\bfu)_j}\leq \phi^{\prime}(\norm{\bfy}_1), &j\notin \supp(\bfy),\\
\end{cases}
\]
by the differential mean value theorem. So $\widehat{\partial}(\phi\circ g)(\bfy)\subseteq S$.

On the other hand, we construct function $h(\bfz)$ when $\bfz$ is in the neighbourhood of $\bfy$:
\[
h(\bfz)=\left(\sum_{j\in \supp(\bfy)}\abs{\bfz_j}+\sum_{j\notin \supp(\bfy)}k_j\bfz_j\right)^q,
\]
where $k_j\in[ -1,1]$. Then $h$ is differentiable at $\bfy$ and $h(\bfz)\leq \phi(\norm{\bfz}_1)$ , $h(\bfy)=\phi(\norm{\bfy}_1)$. From ~\cite[Proposition 8.5]{Rockafellar2009Variational}, we have $\nabla h(\bfy)\in \widehat{\partial} (\phi\circ g)(\bfy)$. Here
\[
\left(\nabla h(\bfy)\right)_j=\begin{cases}
\phi^{\prime}(\norm{\bfy}_1)\cdot \sgn(\bfy_{j}),&j\in \supp(\bfy),\\
\phi^{\prime}(\norm{\bfy}_1)\cdot k_j,&j\notin  \supp(\bfy),\\
\end{cases}
\]
to obtain $S\subseteq\widehat{\partial}(\phi\circ g)(\bfy)$ by the arbitrary $k_j\in[-1,1],j\notin \supp(\bfy)$. Hence $S=\widehat{\partial}(\phi\circ g)(\bfy)$.

The left is to show $\partial (\phi\circ g)(\bfy)\subseteq\widehat{\partial}(\phi\circ g)(\bfy)$, since the inclusion relationship in the other direction holds from the \textit{remark} of Definition \ref{def-subdiff}.

In fact, suppose $\bfu \in \partial (\phi\circ g)(\bfy)$, by the definition, there exists $\bfz^{(k)} \to \bfy, \phi(\norm{\bfz^{(k)}}_1) \to \phi(\norm{\bfy}_1)$ and $\bfu^{(k)}\in \widehat{\partial} (\phi\circ g)(\bfz^{(k)}),\bfu^{(k)}\to \bfu$, thus $\bfz^{(k)}$ and $\bfy$ have the identical support when $k$ is sufficiently large. Based on it and from the fact
\[ \begin{cases}
(\bfu^{(k)})_j=\phi^{\prime}(\norm{\bfz^{(k)}}_1)\cdot \sgn(\bfz_j^{(k)}),&j\in \supp(\bfz^{(k)}),\\
\abs{(\bfu^{(k)})_j}\leq \phi^{\prime}(\norm{\bfz^{(k)}}_1), &j\notin \supp(\bfz^{(k)}),\\
\end{cases}
\]
we obtain that $\bfu\in \widehat{\partial}(\phi\circ g)(\bfy)$ by the limit process.
\end{proof}

The regularity property of function is essential for dealing with the subdifferential of the addition of two non-smooth norms, i.e. $\ell_{1,q}$ term and $\ell_1$/$\ell_{\infty}$ noise term, we give the lemma here.
\begin{lemma}[Regularity]\label{lemma:regular}
Let $\bfy\in\mathbb{R}^m$ be the $m$-dimensional vector, then $(\phi\circ g)(\bfy)$ is regular at $\bfy$ for $p\geq 1$.
\end{lemma}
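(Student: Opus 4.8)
The plan is to show that $(\phi\circ g)(\bfy)$ is regular at every point, which by definition (Rockafellar--Wets, Definition 7.25) amounts to checking that the function is locally lower semicontinuous near $\bfy$ and that its regular subdifferential $\widehat\partial(\phi\circ g)(\bfy)$ coincides with its (limiting) subdifferential $\partial(\phi\circ g)(\bfy)$. Local lower semicontinuity (in fact continuity) is immediate since $\phi$ and $g=\norm{\cdot}_p$ are continuous, so the whole content is the equality of the two subdifferentials, and this is exactly what Lemma~\ref{lem-subdifferential} was set up to deliver. I would therefore split into cases according to the values of $p$ and whether $\bfy=\bf0$.

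First, for $p>1$ and $\bfy\neq\bf0$: here $g$ is $C^1$ near $\bfy$ (the $\ell_p$ norm is smooth away from the origin for $p>1$) and $\phi$ is $C^1$ on $(0,\infty)$, so $\phi\circ g$ is continuously differentiable at $\bfy$; a $C^1$ function is regular, and both subdifferentials reduce to the singleton $\{\nabla(\phi\circ g)(\bfy)\}$. Second, for $p=1$ and $\bfy\neq\bf0$: this is the substantive case, but the work has already been done inside the proof of Lemma~\ref{lem-subdifferential}, where it is shown that $\widehat\partial(\phi\circ g)(\bfy)=S=\partial(\phi\circ g)(\bfy)$; hence regularity holds at such $\bfy$ as well. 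Third, for $\bfy=\bf0$ (any $p\geq1$): Lemma~\ref{lem-subdifferential}(i) computes $\widehat\partial(\phi\circ g)(\bf0)=\prod_{j=1}^m(-\infty,\infty)=\mathbb{R}^m$, and since always $\widehat\partial(\phi\circ g)(\bf0)\subseteq\partial(\phi\circ g)(\bf0)\subseteq\mathbb{R}^m$, both coincide with $\mathbb{R}^m$; equality again gives regularity.

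An alternative, more structural route would be to invoke a chain-rule/composition theorem for regularity: $\phi$ is concave (Proposition~\ref{prop-phi}(ii)), hence regular as a concave function, it is nondecreasing on $[0,\infty)$, and $g=\norm{\cdot}_p$ is convex, hence regular; the composition of a nondecreasing regular outer function with a regular (here convex) inner function is regular, which would handle all cases uniformly. I would probably still present the case analysis, since it ties directly to Lemma~\ref{lem-subdifferential} and avoids having to state the composition theorem with its hypotheses.

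The main obstacle is essentially bookkeeping rather than a deep difficulty: one must make sure the ``$\partial\subseteq\widehat\partial$'' inclusion is genuinely available in every case, because regularity is the reverse of the always-true inclusion $\widehat\partial\subseteq\partial$. For $p=1$, $\bfy\neq\bf0$ this is the limiting-support argument already carried out in Lemma~\ref{lem-subdifferential}; for $\bfy=\bf0$ it is trivial because $\widehat\partial$ is all of $\mathbb{R}^m$; for $p>1$ it is the $C^1$ fact. So the proof will mostly consist of quoting Lemma~\ref{lem-subdifferential} and the definition of regularity, with a one-line verification of lower semicontinuity.
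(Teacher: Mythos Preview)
Your case analysis and appeal to Lemma~\ref{lem-subdifferential} are correct in spirit, but your stated characterization of regularity is incomplete, and this leaves a genuine gap at $\bfy=\bf0$. Definition~7.25 in Rockafellar--Wets defines subdifferential regularity via Clarke regularity of the epigraph; the workable characterization is Corollary~8.11, which requires \emph{two} equalities:
\[
\partial(\phi\circ g)(\bfy)=\widehat{\partial}(\phi\circ g)(\bfy)
\quad\text{and}\quad
\partial^{\infty}(\phi\circ g)(\bfy)=(\widehat{\partial}(\phi\circ g)(\bfy))^{\infty}.
\]
You only address the first. For $\bfy\neq\bf0$ this is harmless, since $\phi\circ g$ is locally Lipschitz there (so $\partial^{\infty}=\{\bf0\}$ and $\widehat{\partial}$ is bounded, whence its horizon cone is $\{\bf0\}$). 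But at $\bfy=\bf0$ the function is \emph{not} locally Lipschitz because $0<q<1$, so the horizon condition is not automatic and must be checked separately. Your argument that $\widehat{\partial}(\phi\circ g)(\bf0)=\mathbb{R}^m$ forces $\partial(\phi\circ g)(\bf0)=\mathbb{R}^m$ is fine for the first equality, but says nothing about the second.

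The paper's proof fills exactly this gap: at $\bfy=\bf0$ it verifies $(\widehat{\partial}(\phi\circ g)(\bf0))^{\infty}=(\mathbb{R}^m)^{\infty}=\mathbb{R}^m$ and, by taking $\bfv^{(k)}=k\bfv\in\widehat{\partial}(\phi\circ g)(\bf0)$ with $\lambda^{(k)}=1/k$, shows $\partial^{\infty}(\phi\circ g)(\bf0)=\mathbb{R}^m$ as well; at $\bfy\neq\bf0$ it uses boundedness of $\widehat{\partial}$ to conclude both horizon objects are $\{\bf0\}$. The fix is short, but without it your proof does not establish regularity at the origin.
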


\begin{proof}
By~\cite[Corollary 8.11]{Rockafellar2009Variational}, $(\phi\circ g)(\bfy)$ is regular at $\bfy$ if and only if
\begin{equation}\label{equiv-condition}
\partial(\phi\circ g)(\bfy)=\widehat{\partial}(\phi\circ g)(\bfy),\hspace{3mm} \partial^{\infty}(\phi\circ g)(\bfy)=(\widehat{\partial}(\phi\circ g)(\bfy))^{\infty}.
\end{equation}
In the proof of Lemma~\ref{lem-subdifferential}, we know that the first equality in~\eqref{equiv-condition} holds. The left is to verify the second equality.

For $\bfy=\bf0$, we have
\begin{equation*}
\widehat{\partial}(\phi\circ g)(\bfy)=(-\infty,\infty)^m,
\end{equation*}
thus the horizon cone $(\widehat{\partial}(\phi\circ g)(\bfy))^{\infty}$ is the same set $(-\infty,\infty)^m$ by letting $\bfv^{(k)}=k\bfv$ and $\lambda^{(k)}=1/k$ in Definition~\ref{def-horizon-cone}. We can also conclude the horizon subdifferential $\partial^{\infty}(\phi\circ g)(\bfy)=(-\infty,\infty)^m$ by the same trick.

For $\bfy\neq\bf0$, we have the following from Definition \ref{def-subdiff} and the {\em remark} of Definition \ref{def-horizon-cone}:
\begin{equation*}
\partial^{\infty}(\phi\circ g)(\bfy)=(\widehat{\partial}(\phi\circ g)(\bfy))^{\infty}=\{\bf0\},
\end{equation*}
due to the boundedness of $\widehat{\partial}(\phi\circ g)(\bfy)$.
\end{proof}

\begin{remark}
From~\cite[Proposition 10.5]{Rockafellar2009Variational} for separable functions, the sum function
\begin{equation*}
\norm{\bfx}_{p,q}^{q}=\sum_{\sfi \in \sfG}\phi(\norm{\bfx_{\sfi}}_p)
\end{equation*}
is also regular.
\end{remark}

The objective function $\calE$ in~\eqref{eq:lp-mini-prob} reads
\begin{equation}\label{eq:object-func}
	\calE(\bfx) = \sum_{\sfi \in \sfG}\phi(\norm{\bfx_{\sfi}}_p)  +F_r(\bfx),  ~p\geq 1,~1\leq r \leq \infty,
\end{equation}
which is bounded below, coercive, and continuous. It has at least one minimizer.

Now, we derive the subdifferential of $\calE$ at $\bfx$. From Lemma~\ref{lemma:regular} and the remark, we know that $\sum_{\sfi \in \sfG}\phi(\norm{\bfx_{\sfi}}_p)$ is regular. For $1\leq r\leq \infty$, $F_r(\bfx)$ is convex and also regular. By~\cite[Exercise 10.9]{Rockafellar2009Variational}, we get
\begin{equation}\label{eq:obj-func-subdiff-p2}
  \partial \calE(\bfx) = \partial \left(\sum_{\sfi \in \sfG} \phi(\norm{\bfx_{\sfi}}_p)\right)
  +\partial F_r(\bfx).
\end{equation}
The subdifferential on the first term in \eqref{eq:obj-func-subdiff-p2} can be obtained by~\cite[Proposition 10.5]{Rockafellar2009Variational},
\begin{equation}\label{eq:subdiff-first-term}
\partial \left(\sum_{\sfi \in \sfG} \phi(\norm{\bfx_{\sfi}}_p)\right)=\prod\limits_{\sfi \in \sfG} \partial (\phi\circ g) (\bfx_{\sfi}).
\end{equation}
The subdifferential factors in the right-hand term can be calculated by Lemma~\ref{lem-subdifferential} according to the specific cases of $\bfx_{\sfi}$. The subdifferential on the second term in \eqref{eq:obj-func-subdiff-p2} can be obtained by the chain rule of composite subdifferential,
\[ \displaystyle\partial F_r(\bfx)=\begin{cases}
 \displaystyle\frac{1}{\alpha r}\bfA^{T} \partial \|\bfv\|_r^r|_{\bfv=\bfA\bfx-\bfy},& r\geq 1,\\
\displaystyle\frac{1}{\alpha} \bfA^T \partial \|\bfv\|_{\infty}|_{\bfv=\bfA\bfx-\bfy},& r=\infty.
\end{cases}
\]
where the subdifferential of the infinity norm can be derived as follows.
From the Danskin-Bertsekas Theorem for subdifferential in ~\cite[Proposition A.22]{Bertsekas1971thesis}, it holds that
\begin{equation}\label{eq:infty-p2}
\partial \norm{\bfh}_{\infty}=\{\bfu\in\mathbb{R}^M ~|\norm{\bfu}_1\leq 1,\bfh^T\bfu=\norm{\bfh}_{\infty}\}.
\end{equation}
Hence, the each entry of element in $\partial F_r(\bfx)$, denoted by $\vec{\eta}_{\sfi,j}(\bfx),\sfi \in \sfG,j\in J_{\sfi}$ has the following representation,
\begin{equation}\label{eq:partialFr}
\vec{\eta}_{\sfi,j}(\bfx)=\begin{cases}
\dfrac{1}{\alpha}  \displaystyle\sum_{k \in I}\left(\abs{\sum_{\sfm\in\sfG}A_{k,\sfm} \bfx_{\sfm}-y_k}^{r-1}\cdot\sgn\left(\sum_{\sfm\in\sfG}A_{k,\sfm} \bfx_{\sfm}-y_k\right)\cdot (A_{k,\sfi})_j\right),& r>1,\\
\in \dfrac{1}{\alpha}\displaystyle\sum_{k\in I}\partial\abs{\cdot}(\sum_{\sfm\in\sfG}A_{k,\sfm}\bfx_{\sfm}-y_k)(A_{k,\sfi})_j,&r=1,\\
\in \displaystyle\frac{1}{\alpha}\left\{ \sum_{k\in I}(A_{k,\sfi})_j u_k~| \norm{\bfu}\leq 1, \bfu^T(\bfA\bfx-\bfy)=\norm{\bfA\bfx-\bfy}_{\infty}    \right\},&r=\infty.
\end{cases}
\end{equation}

From the definition of the subdifferential,  we have that $\bfx^{\ast}$ is a stationary point of ~\eqref{eq:lp-mini-prob} if and only if
\begin{equation}\label{eq:first-opt-cond}
       \bf0 \in \partial \calE(\bfx^{\ast}).
\end{equation}
\subsection{A motivating proposition}

The following proposition inspires us to design the algorithm in the next section.

\begin{proposition}\label{lem-motivation}
Suppose $\bfx \in \bbR^N$ has the group structure $\bfx:=\left(\bfx_{\sf1}^T,\bfx_{\sf2}^T,\cdots,\bfx_{\sfg}^T\right)^T$. If $\bfx$ is sufficiently close to a local minimizer (or a stationary point) $\bfx^{\ast}$ of~\eqref{eq:lp-mini-prob}.
Then it holds that
   \begin{equation}\label{eq:motivation}
     \bfx_{\sfi}^{\ast} = \bf0, \; \forall \sfi \in \sfG \setminus \supp_{\sfG}(\bfx).
   \end{equation}
\end{proposition}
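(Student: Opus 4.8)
The plan is to combine the first-order optimality condition at $\bfx^{\ast}$ with the explicit subdifferential formulas of Lemma~\ref{lem-subdifferential} and~\eqref{eq:partialFr} to obtain a strictly positive lower bound for the $\ell_p$-norm of every nonzero group of $\bfx^{\ast}$, and then to observe that a point $\bfx$ close to $\bfx^{\ast}$ cannot have a vanishing $\sfi$-th group when $\bfx^{\ast}_{\sfi}$ would violate this bound; the contrapositive is exactly~\eqref{eq:motivation}.

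First I would use that $\bf0\in\partial\calE(\bfx^{\ast})$, which holds whether $\bfx^{\ast}$ is a local minimizer or only a stationary point. By~\eqref{eq:obj-func-subdiff-p2}--\eqref{eq:subdiff-first-term} there are $\vec{\xi}$ with $\vec{\xi}_{\sfi}\in\partial(\phi\circ g)(\bfx^{\ast}_{\sfi})$ for each $\sfi$ and $\vec{\eta}\in\partial F_r(\bfx^{\ast})$ such that $\vec{\xi}=-\vec{\eta}$. Fix $\sfi\in\supp_{\sfG}(\bfx^{\ast})$, so $\bfx^{\ast}_{\sfi}\neq\bf0$, and let $p^{\ast}$ be the conjugate exponent ($1/p+1/p^{\ast}=1$, with $p^{\ast}=\infty$ if $p=1$). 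Reading off Lemma~\ref{lem-subdifferential}(ii), a short computation gives $\norm{\vec{\xi}_{\sfi}}_{p^{\ast}}=\phi^{\prime}(\norm{\bfx^{\ast}_{\sfi}}_p)=q\norm{\bfx^{\ast}_{\sfi}}_p^{q-1}$: for $p>1$ this uses $(p-1)p^{\ast}=p$ together with $\sum_j\abs{\bfx^{\ast}_{\sfi,j}}^p=\norm{\bfx^{\ast}_{\sfi}}_p^p$, while for $p=1$ the entries indexed by $\supp(\bfx^{\ast}_{\sfi})$ already have absolute value $\phi^{\prime}(\norm{\bfx^{\ast}_{\sfi}}_1)$, which pins down the $\ell_\infty$-norm. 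On the other hand $\vec{\eta}_{\sfi}=-\vec{\xi}_{\sfi}$ is, by~\eqref{eq:partialFr}, a fixed linear image of a vector whose norm is bounded in every regime: that vector is built from $\abs{\bfA\bfx^{\ast}-\bfy}^{r-1}\sgn(\cdot)$ for $1<r<\infty$ (finite since $\bfx^{\ast}$ is fixed), has entries in $[-1,1]$ for $r=1$, and lies in an $\ell_1$-ball of radius $1$ for $r=\infty$; hence $\norm{\vec{\eta}_{\sfi}}_{p^{\ast}}\leq C_{\sfi}<\infty$. Equating the two expressions yields $q\norm{\bfx^{\ast}_{\sfi}}_p^{q-1}\leq C_{\sfi}$, and since $q-1<0$ this is the lower bound $\norm{\bfx^{\ast}_{\sfi}}_p\geq(q/C_{\sfi})^{1/(1-q)}>0$. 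As $\supp_{\sfG}(\bfx^{\ast})$ is finite, $\delta:=\min_{\sfi\in\supp_{\sfG}(\bfx^{\ast})}(q/C_{\sfi})^{1/(1-q)}$ is strictly positive, and every nonzero group of $\bfx^{\ast}$ has $\ell_p$-norm at least $\delta$ (using coercivity of $\calE$ one could even make $\delta$ independent of the particular minimizer, but that is not needed here).

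Next I would pass to the conclusion. By finite-dimensional norm equivalence there is $C^{\prime}>0$ with $\norm{\bfz_{\sfi}}_p\leq C^{\prime}\norm{\bfz}_2$ for all $\bfz\in\bbR^N$ and all $\sfi\in\sfG$. Take $\bfx$ close to $\bfx^{\ast}$ in the quantitative sense $\norm{\bfx-\bfx^{\ast}}_2<\delta/C^{\prime}$, and let $\sfi\in\sfG\setminus\supp_{\sfG}(\bfx)$, i.e.\ $\bfx_{\sfi}=\bf0$. Then $\norm{\bfx^{\ast}_{\sfi}}_p=\norm{\bfx^{\ast}_{\sfi}-\bfx_{\sfi}}_p\leq C^{\prime}\norm{\bfx^{\ast}-\bfx}_2<\delta$. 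If $\bfx^{\ast}_{\sfi}$ were nonzero, then $\sfi\in\supp_{\sfG}(\bfx^{\ast})$ and the lower bound would force $\norm{\bfx^{\ast}_{\sfi}}_p\geq\delta$, a contradiction. Hence $\bfx^{\ast}_{\sfi}=\bf0$, which is~\eqref{eq:motivation}.

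The step I expect to be the main obstacle is evaluating $\norm{\vec{\xi}_{\sfi}}_{p^{\ast}}$ uniformly in $p\in[1,\infty)$ together with the finiteness of $\norm{\vec{\eta}_{\sfi}}_{p^{\ast}}$ uniformly in $r\in[1,\infty]$: the $p=1$ branch of Lemma~\ref{lem-subdifferential} is genuinely set-valued, so one must argue through the support entries of $\vec{\xi}_{\sfi}$ (which is precisely what makes the dual norm equal $\phi^{\prime}(\norm{\bfx^{\ast}_{\sfi}}_1)$), and the three branches of~\eqref{eq:partialFr} have to be treated separately to bound $\vec{\eta}_{\sfi}$. Everything afterwards — inverting $q\,t^{q-1}\leq C$ using $0<q<1$, taking a minimum over the finite set $\supp_{\sfG}(\bfx^{\ast})$, and the closeness/contradiction argument — is routine.
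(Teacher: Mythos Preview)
Your proof is correct and follows essentially the same approach as the paper's: both combine the stationarity condition $\mathbf{0}\in\partial\calE(\bfx^{\ast})$ with the explicit subdifferential of $\phi\circ g$ to extract a strictly positive lower bound on $\norm{\bfx^{\ast}_{\sfi}}_p$ for each nonzero group, and then contradict that bound by closeness to a point with $\bfx_{\sfi}=\mathbf{0}$. The only technical difference is that you measure $\vec{\xi}_{\sfi}$ in the conjugate norm $\ell_{p^{\ast}}$, obtaining the exact identity $\norm{\vec{\xi}_{\sfi}}_{p^{\ast}}=q\norm{\bfx^{\ast}_{\sfi}}_p^{q-1}$, whereas the paper sums absolute values over $\supp(\bfx^{\ast}_{\sfi})$ and then uses Lemma~\ref{lemma:norm} to pass from $\norm{\cdot}_{p-1}^{p-1}$ to $\norm{\cdot}_p^{p-1}$; your route is a touch cleaner but the underlying idea is identical.
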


\begin{proof}
We prove~\eqref{eq:motivation} by contradiction.

As $\bfx^{\ast}$ is a local minimizer (or a stationary point) of $\calE$, the condition~\eqref{eq:first-opt-cond} implies that $\bf0\in \partial \cal{E}(\bfx^{\ast})$.

If $\bfx_{\sfi^{\prime}}^{\ast}\neq \bf0,\sfi^{\prime}\in\sfG\setminus\supp_{\sfG}(\bfx)$, that is, $\bfx^{\ast}_{\sfi^{\prime},j}\neq 0$ for some $ j\in J_{\sfi^{\prime}} $. For $1 < r < \infty$, we have
    \begin{equation}\label{eq:first-opt-cond-component-q2}
        0 = \phi^{\prime}(\norm{\bfx_{\sfi^{\prime}}^{\ast}}_p)\norm{\bfx_{\sfi^{\prime}}^{\ast}}_p^{1-p}|\bfx^{\ast}_{\sfi^{\prime},j}|^{p-1}\sgn(\bfx^{\ast}_{\sfi^{\prime},j})
       + \vec{\eta}_{\sfi^{\prime},j}(\bfx^{\ast}).
    \end{equation}
Summing up all the absolute values of the two terms in~\eqref{eq:first-opt-cond-component-q2} for $j\in \supp(\bfx^{\ast}_{\sfi^{\prime}})$, we have
\begin{equation}\label{eq:first-opt-cond-component-q3}
q\norm{\bfx^{\ast}_{\sfi^{\prime}}}_p^{q-1}\leq q\norm{\bfx_{\sfi^{\prime}}^{\ast}}_p^{q-p}\norm{\bfx^{\ast}_{\sfi^{\prime}}}^{p-1}_{p-1}=\sum_{j\in \supp(\bfx_{\sfi^{\prime}}^{\ast})}\abs{\vec{\eta}_{\sfi^{\prime},j}(\bfx^{\ast})},
\end{equation}
the left inequality holds from Lemma \ref{lemma:norm} for $p>1$ and from
$\norm{\bfx^{\ast}_{\sfi^{\prime}}}^{p-1}_{p-1}=\# \{ \mbox{nonzero etries of } \bfx^{\ast}_{\sfi^{\prime}}\}$
for $p=1$.

The right side of~\eqref{eq:first-opt-cond-component-q3} is uniformly bounded in the neighborhood of $\bfx$, and the bound is independent of $\bfx^*$. Since $\bfx_{\sfi^{\prime}}^{\ast}$ can be sufficiently close to $\bfx_{\sfi^{\prime}} = 0, \sfi^{\prime}\in \sfG\setminus\supp_{\sfG}(\bfx)$, it contradicts ~\eqref{eq:first-opt-cond-component-q3} by $0 < q < 1$.

For $r=1$ and $r=\infty$, we have
\begin{equation}\label{eq:first-opt-cond-component-q4}
        0 \in \phi^{\prime}(\norm{\bfx_{\sfi^{\prime}}^{\ast}}_p)\norm{\bfx_{\sfi^{\prime}}^{\ast}}_p^{1-p}|\bfx^{\ast}_{\sfi^{\prime},j}|^{p-1}\sgn(\bfx^{\ast}_{\sfi^{\prime},j})
       + \vec{\eta}_{\sfi^{\prime},j}(\bfx^{\ast}),
    \end{equation}
Thus, the results can be derived similarly from the uniform boundedness of the sets $\vec{\eta}_{\sfi^{\prime},j}(\bfx^{\ast})$ in the neighborhood of $\bfx$.

\end{proof}

\begin{remark}
For the special case $r=2$ in fidelity term, \cite{Chen2010Lower,HuYaohuaetal2017GSO} established the lower bound theory, which can also inspire our proposition.
\end{remark}

\subsection{Algorithm}
Motivated by Proposition~\ref{lem-motivation}, we propose to solve the problem~\eqref{eq:lp-mini-prob} by an iterative process, which generates a sequence whose group support set is nonincreasing. Suppose that $\bfx^{(l)}$ is an approximate solution in the $l$th iteration. In the next iteration, we minimize the objective function only on the group support set $\sfS^{(l)}$ of $\bfx$, with the remaining group components being null. This idea yields the following iterative support shrinking algorithm (ISSA).

\vspace{3mm}
\begin{mdframed}[frametitle = {ISSA: Iterative Support Shrinking Algorithm}, frametitlerule = true]

\noindent{\bf Initialization:} Select $\bfx^{(0)} \in \bbR^{N}$.

\noindent{\bf Iteration:} For $l = 0, 1, \ldots$ until convergence:
\begin{enumerate}
\renewcommand{\labelenumi}{ \arabic{enumi}.}
  \item Set $\sfS^{(l)}  = \supp_{\sfG}(\bfx^{(l)})$.
  \item Compute $\bfx^{(l+1)}_{\sfS^{(l)}}$ by solving
\begin{equation}\label{eq:constrained-lp-lq-prob}
    \begin{aligned}
      &  \min_{\bfx_{\sfS^{(l)}}} \sum_{ \sfi \in \sfS^{(l)} } \phi( \norm{\bfx_{\sfi}}_p)
	+F_{r}^{(l)}(\bfx),
    \end{aligned}
    \tag{$\mathcal{P}_{o}$}
\end{equation}
where $F_r^{(l)}(\bfx)$ is the distance of $F_r(\bfx)$ at the $l$-th step over the group support set $\sfS^{(l)}$,
\begin{equation*}
F_{r}^{(l)}(\bfx)=\begin{cases}
\displaystyle\frac{1}{r\alpha}  \sum_{k \in I}\abs{\sum_{\sfi\in \sfS^{(l)}} A_{k,\sfi} \bfx_{\sfi} - y_k }^{r}, &r\geq 1,\\
\displaystyle\frac{1}{\alpha} \max_{k\in I}\abs{\sum_{\sfi\in \sfS^{(l)}} A_{k,\sfi} \bfx_{\sfi} - y_k },&r=\infty.
\end{cases}
\end{equation*}
\item Set $$\bfx_{\sfi}^{(l+1)}=\vec{0}, \mbox{ for }\sfi\in \sfG \setminus {\sfS}^{(l)}.$$
\end{enumerate}

\end{mdframed}

To make {\bf ISSA} more practical, each term $\phi( \norm{\bfx_{\sfi}}_p), \sfi \in \sfS^{(l)}$ can be linearized at $\|\bfx_{\sfi}^{(l)}\|_p \neq 0$. We introduce the following energy functional with proximal linearization:
\begin{equation}\label{eq:regularized-linear-approx-obj}
\begin{split}
   \calE^{(l)}(\bfx)  =&  \sum_{ \sfi \in \sfS^{(l)} } \phi(\|\bfx_{\sfi}^{(l)}\|_p)
	+ \phi^{\prime}(\|\bfx_{\sfi}^{(l)}\|_p) \left(\norm{\bfx_{\sfi}}_p - \|\bfx_{\sfi}^{(l)}\|_p\right)\\
	&+F_r^{(l)}(\bfx) + \frac{\beta}{2}\| \bfx - \bfx^{(l)} \|_{2}^{2}.
\end{split}	
\end{equation}
where $\beta\geq 0$.

We present an inexact iterative support shrinking algorithm with proximal linearization to solve~\eqref{eq:lp-mini-prob}.

\vspace{4mm}
\begin{mdframed} [frametitle = {InISSAPL: Inexact Iterative Support Shrinking Algorithm with Proximal Linearization}, frametitlerule = true]

\noindent {\bf Initialization:}  Select $\bfx^{(0)}=c\mathbb{1}$ with $c\neq 0$ or randomly, where $\mathbb{1}$ is the all one vector.

\noindent {\bf Iteration:} For $l = 0, 1, \ldots$ until convergence:
\begin{enumerate}
\renewcommand{\labelenumi}{ \arabic{enumi}.}
\renewcommand{\labelenumii}{ \arabic{enumi}.\arabic{enumii}.}
  \item Set $\sfS^{(l)}  = \supp_{\sfG}(\bfx^{(l)})$. Set $\beta=0$ for $l=0$ and $\beta>0$ fixed for $l\geq 1$.
  \item Compute $\bfx^{(l+1)}_{\sfS^{(l)}}$ by approximately solving
  \begin{equation}\label{eq:prob-z}
	 \min_{\bfx_{\sfS^{(l)}} } \calE^{(l)}(\bfx)\\
\tag{$\mathcal{P}_{x}$}
\end{equation}
such that
\begin{equation}\label{eq:sub-opt-cond}
\bfu^{(l)}(\bfx^{(l+1)})\in \partial \calE^{(l)}(\bfx^{(l+1)}), \|\bfu^{(l)}(\bfx^{(l+1)})\|_2\leq \frac{\beta}{2}\varepsilon \|\bfx^{(l+1)}-\bfx^{(l)}\|_2.
\end{equation}
with the tolerance error $\varepsilon$.
\item Set
$$\bfx_{\sfi}^{(l+1)}=\vec{0}, \bfu_{\sfi}^{(l)}(\bfx^{(l+1)})=0, \mbox{ for }\sfi\in \sfG \setminus {\sfS}^{(l)}.$$
\end{enumerate}
\end{mdframed}

\begin{remark}
The condition~\eqref{eq:sub-opt-cond} in InISSAPL is motivated by~\cite{Attouch2013Convergence,Zhifangliuaxiv}. It corresponds to an inexact inner loop and a guide to select the approximate solution for \eqref{eq:prob-z}. Due to the strong convexity of the problem \eqref{eq:prob-z}, it can be solved to any given accuracy. Therefore, the condition~\eqref{eq:sub-opt-cond} in InISSAPL can hold, as long as the problem \eqref{eq:prob-z} is solved sufficiently accurately.
\end{remark}

\begin{remark}
From the motivating Proposition \ref{lem-motivation}, $\bfx^{(0)}$ is required to be with as large support as possible. There are two strategies to choose the starting point. One is to set $\bfx^{(0)}$ by nonzero scalar multiplication of the all one vector, which yields a group lasso when $p=2$ for the first step. The other is to set $\bfx^{(0)}$ by randomly generating data of i.i.d Gaussian (with zero probability to obtain zero group member), indicating a weighted group lasso when $p=2$. Due to the fact that $\bfx^{(0)}$  is not the proximal solution, we also set $\beta=0$ for the first step in the algorithm. The results of experiments with suggested two kinds of starting points are given in section \ref{discuss-init}.

\end{remark}

For the convenience of description later, we give the representation of the subdifferential in~\eqref{eq:sub-opt-cond} for $~\sfi\in \sfS^{(l)},~j\in J_{\sfi}$,
\begin{equation}\label{eq:partial_calEL}
\begin{split}
\bfu_{\sfi,j}^{(l)}(\bfx)&=\vec{\zeta}_{\sfi,j}^{(l)}(\bfx)+\vec{\eta}^{(l)}_{\sfi,j}(\bfx)+\beta(\bfx_{\sfi,j}-\bfx^{(l)}_{\sfi,j}),
\end{split}
\end{equation}
where
\[
\vec{\zeta}_{\sfi,j}^{(l)}(\bfx)=
\begin{cases}
\phi^{\prime}(\|\bfx_{\sfi}^{(l)}\|_p)\norm{\bfx_{\sfi}}_p^{1-p}\abs{\bfx_{\sfi,j}}^{p-1}\sgn(\bfx_{\sfi,j}),&~p>1,\\
\phi^{\prime}(\|\bfx_{\sfi}^{(l)}\|_1) \sgn(\bfx_{\sfi,j}),&~p=1,\mbox{ and }j\in \supp(\bfx_{\sfi}),\\
\in [-\phi^{\prime}(\|\bfx_{\sfi}^{(l)}\|_1),\phi^{\prime}(\|\bfx_{\sfi}^{(l)}\|_1)], &~p=1,\mbox{ and }j\notin \supp(\bfx_{\sfi}),
\end{cases}
\]
and
\[
\vec{\eta}_{\sfi,j}^{(l)}(\bfx)=\begin{cases}
\dfrac{1}{\alpha}  \displaystyle\sum_{k \in I}\left(\abs{\sum_{\sfm\in\sfS^{(l)}}A_{k,\sfm} \bfx_{\sfm}-y_k}^{r-1}\cdot\sgn\left(\sum_{\sfm\in\sfS^{(l)}}A_{k,\sfm} \bfx_{\sfm}-y_k\right)\cdot (A_{k,\sfi})_j\right),& r>1,\\
\in \dfrac{1}{\alpha}\displaystyle\sum_{k\in I}\partial\abs{\cdot}(\sum_{\sfm\in\sfS^{(l)}}A_{k,\sfm}\bfx_{\sfm}-y_k)(A_{k,\sfi})_j,&r=1,\\
\in \displaystyle\frac{1}{\alpha}\left\{ \sum_{k\in I}(A_{k,\sfi})_j u_k~| \norm{\bfu}\leq 1, \bfu^T(\bfA\bfx-\bfy)=\norm{\bfA\bfx-\bfy}_{\infty}    \right\},&r=\infty.
\end{cases}
\]

\section{Convergence analysis}
\label{sec:con-ana}

In this section, we establish the global convergence result of the sequence generated by the InISSAPL algorithm. Theorem~\ref{thm-convergence} in the appendix gives a celebrating theoretical framework for the convergence of sequence in decent methods. Recently it has extensive applications~\cite{Attouch2010Proximal, Attouch2013Convergence, Bolte2014Proximal}, especially in non-convex optimization. When we turn back to our problem, the key issue is to deal with the non-Lipschitz property of $\mathcal{E}(\bfx)$. In this paper, a lower bound theory of the iterative sequence is developed to overcome the difficulty of the non-Lipschitz property. Furthermore, due to the non-smooth property of $\mathcal{E}(\bfx)$, the construction of the element in $\partial \mathcal{E}(\bfx)$ to prove the relative error condition {\em (H2)} in Theorem~\ref{thm-convergence} is more technical.

From the iteration process, we can see that it produces a nonincreasing sequence of group support set. The lemma is given in the following.

\begin{lemma}\label{lem-finite-num-iter}
The sequence $\set{\sfS^{(l)}}$ converges in a finite number of iterations, i.e., there exists an integer $ L > 0$ such that if $l \geq L$, then $\sfS^{(l)} \equiv  \sfS^{(L)}$.
\end{lemma}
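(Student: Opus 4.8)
The plan is to show two things: first, that the group supports are nested, $\sfS^{(l+1)}\subseteq\sfS^{(l)}$ for every $l$; and second, that once $\sfS^{(l+1)}=\sfS^{(l)}$ happens, it persists for all later indices. Since each $\sfS^{(l)}$ is a subset of the finite set $\sfG=\{\sf1,\dots,\sfg\}$, a nonincreasing chain of subsets must become constant after at most $\sfg$ strict decreases, which yields the claimed integer $L$.

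For the nesting step, recall that in the InISSAPL iteration we only ever solve the subproblem $(\mathcal{P}_x)$ over the variables indexed by $\sfS^{(l)}$, and we explicitly set $\bfx_{\sfi}^{(l+1)}=\bf0$ for $\sfi\in\sfG\setminus\sfS^{(l)}$ in step 3. Hence $\supp_{\sfG}(\bfx^{(l+1)})\subseteq\sfS^{(l)}$. By definition $\sfS^{(l+1)}=\supp_{\sfG}(\bfx^{(l+1)})$, so $\sfS^{(l+1)}\subseteq\sfS^{(l)}$, giving the monotone chain
\begin{equation*}
\sfS^{(0)}\supseteq\sfS^{(1)}\supseteq\sfS^{(2)}\supseteq\cdots.
\end{equation*}
Note this argument does not even need the inexact optimality condition \eqref{eq:sub-opt-cond}: a group that has been zeroed out at some step can never become nonzero again, because it is frozen at $\bf0$ and excluded from all subsequent subproblems. (If one prefers to state it as: the algorithm never reintroduces a killed group, that is exactly the content of step 3 of the iteration.)

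For the persistence step, suppose $\sfS^{(l+1)}=\sfS^{(l)}=:\sfS$. Then at iteration $l+1$ we have $\sfS^{(l+1)}=\sfS$ and we again solve $(\mathcal{P}_x)$ over exactly the index set $\sfS$; the problem data $(\calE^{(l+1)}$, the submatrix $\bfA_{\sfS}$, etc.$)$ depend on $l$ only through $\bfx^{(l)}$ and the support $\sfS$, and the linearization point $\|\bfx_{\sfi}^{(l)}\|_p$ is nonzero for every $\sfi\in\sfS$. The resulting $\bfx^{(l+2)}$ again has $\supp_{\sfG}(\bfx^{(l+2)})\subseteq\sfS$. Combined with the chain above, the sequence of supports is nonincreasing and, once two consecutive terms agree, no further strict decrease is possible — more simply, a nonincreasing sequence in the finite lattice of subsets of $\sfG$ is eventually constant. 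Let $L$ be the first index at which $\sfS^{(L)}=\sfS^{(L+1)}$; then $\sfS^{(l)}\equiv\sfS^{(L)}$ for all $l\geq L$.

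The only point requiring a little care — and the one I would regard as the main obstacle if one wants a fully rigorous argument rather than the structural one above — is ensuring that the iteration is actually well-defined once we reach a stable support, i.e. that the subproblem $(\mathcal{P}_x)$ over $\sfS$ always produces an iterate whose support is again a subset of $\sfS$ (never larger). This is immediate from the construction since the components outside $\sfS^{(l)}$ are hard-set to zero; but if one wanted the support to possibly shrink within a stable regime while still calling the lemma's conclusion valid, one would have to rule that out, which follows because $(\mathcal{P}_x)$ is strongly convex (coefficient $\beta>0$ for $l\ge 1$) hence has a unique solution, and by a lower-bound-type argument — anticipating the lower bound theory referenced in Section~\ref{sec:con-ana} — the nonzero groups cannot have their $\ell_p$ norms driven to zero in one step. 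For the purposes of this lemma, however, the finiteness conclusion needs only the nesting $\sfS^{(l+1)}\subseteq\sfS^{(l)}$ and the finiteness of $\sfG$, so the proof is short.
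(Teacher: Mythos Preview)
Your core argument --- the nesting $\sfS^{(l+1)}\subseteq\sfS^{(l)}$ coming from step~3 of the algorithm, together with the finiteness of $\sfG$ --- is correct and is exactly the paper's (two-line) proof. The ``persistence'' paragraph and the lower-bound digression are unnecessary detours: a nonincreasing chain of subsets of a finite set stabilizes regardless of whether two consecutive equal terms force all later terms to coincide, and in fact your specific choice of $L$ as the \emph{first} index with $\sfS^{(L)}=\sfS^{(L+1)}$ is not justified by anything you wrote (nothing prevents the support from shrinking again at step $l+2$), though of course some stabilizing $L$ exists by the monotonicity-plus-finiteness argument you correctly fall back on at the end.
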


\begin{proof}
Since $\sfG$ is a finite set and
\[
  \sfG \supseteq \sfS^{(0)} \supseteq \cdots \supseteq \sfS^{(l)} \supseteq \cdots,
\]
 $\set{\sfS^{(l)}}$ converges in a finite number of iterations.
\end{proof}

In the next, we verify the conditions {\em (H1)-(H3)} in Theorem~\ref{thm-convergence} for the sequence of the objective function $\mathcal{E}(\bfx^{(l)})$. {\em (H1)} is the sufficient decrease condition for the sequence, and it is given in Lemma~\ref{lem-bound-sufficient-decrease}. Here we introduce the energy functional with proximal linearization once again, but defined over $\bfx\in \mathbb{R}^N$:
\begin{equation}\label{eq:prox-linear-approx-obj}
\begin{split}
   \calF^{(l)}(\bfx) & =  \sum_{ \sfi \in \sfS^{(l)} } \phi(\|\bfx_{\sfi}^{(l)}\|_p)
	+ \phi^{\prime}(\|\bfx_{\sfi}^{(l)}\|_p) \left(\|\bfx_{\sfi}\|_p - \|\bfx_{\sfi}^{(l)}\|_p\right)
	+ F_r(\bfx)+ \frac{\beta}{2}\| \bfx - \bfx^{(l)} \|_{2}^{2}.
\end{split}	
\end{equation}
It should be noted that it is different from $\mathcal{E}^{(l)}(\bfx)$ in (\ref{eq:regularized-linear-approx-obj}) by the fidelity term.

\begin{lemma}\label{lem-bound-sufficient-decrease}
For any $\beta > 0$ and $0 \leq \ve < 1$, let $\set{\bfx^{(l)}}$ be a sequence generated by {\rm InISSAPL}. Then
\begin{enumerate}
\item The sequence $\set{\calE(\bfx^{(l)})}$ is nonincreasing and satisfies
    \begin{equation}\label{eq:sufficient-decrease-condition}
    \calE(\bfx^{(l+1)})+ \frac{\beta}{2}(1-\ve)\| \bfx^{(l+1)} - \bfx^{(l)} \|_{2}^{2} \leq \calE(\bfx^{(l)}).
    \end{equation}
\item The sequence $\set{\bfx^{(l)}}$  is bounded and satisfies $\lim_{l \to \infty} \| \bfx^{(l+1)} - \bfx^{(l)} \|_{2} = 0$.
\end{enumerate}
\end{lemma}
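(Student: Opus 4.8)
\textbf{Proof plan for Lemma~\ref{lem-bound-sufficient-decrease}.}

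The plan is to derive the sufficient decrease inequality \eqref{eq:sufficient-decrease-condition} by comparing $\calE(\bfx^{(l)})$ and $\calE(\bfx^{(l+1)})$ through the linearized surrogate $\calF^{(l)}$ defined in \eqref{eq:prox-linear-approx-obj}, and then reading off boundedness and the vanishing of successive differences as easy corollaries. First I would observe that $\bfx^{(l+1)}$, being an (approximate) minimizer of $\calE^{(l)}$ over $\bfx_{\sfS^{(l)}}$ with the complementary groups set to zero, in particular satisfies $\calF^{(l)}(\bfx^{(l+1)}) \leq \calF^{(l)}(\bfx^{(l)})$; here one uses that over the support $\sfS^{(l)}$ the two functionals $\calE^{(l)}$ and $\calF^{(l)}$ differ only by how the fidelity term is written (a genuine minimization step gives this with $\ve$-slack, but since \eqref{eq:sub-opt-cond} only asserts approximate stationarity, the cleaner route is to use the optimality residual directly, see below). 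Then I would plug in $\calF^{(l)}(\bfx^{(l)}) = \sum_{\sfi\in\sfS^{(l)}}\phi(\|\bfx^{(l)}_{\sfi}\|_p) + F_r(\bfx^{(l)}) = \calE(\bfx^{(l)})$ (using $\supp_\sfG(\bfx^{(l)})=\sfS^{(l)}$), and on the other side use concavity of $\phi$ via \eqref{eq:first-approx}: for each $\sfi\in\sfS^{(l)}$,
\[
\phi(\|\bfx^{(l)}_{\sfi}\|_p) + \phi'(\|\bfx^{(l)}_{\sfi}\|_p)\bigl(\|\bfx^{(l+1)}_{\sfi}\|_p - \|\bfx^{(l)}_{\sfi}\|_p\bigr) \geq \phi(\|\bfx^{(l+1)}_{\sfi}\|_p),
\]
so that $\calF^{(l)}(\bfx^{(l+1)}) \geq \sum_{\sfi\in\sfS^{(l)}}\phi(\|\bfx^{(l+1)}_{\sfi}\|_p) + F_r(\bfx^{(l+1)}) + \tfrac{\beta}{2}\|\bfx^{(l+1)}-\bfx^{(l)}\|_2^2 = \calE(\bfx^{(l+1)}) + \tfrac{\beta}{2}\|\bfx^{(l+1)}-\bfx^{(l)}\|_2^2$, where the last equality uses $\supp_\sfG(\bfx^{(l+1)}) \subseteq \sfS^{(l)}$ so no spurious groups are introduced into $F_r$ or into the $\ell_{p,q}$ sum. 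Chaining these gives $\calE(\bfx^{(l+1)}) + \tfrac{\beta}{2}\|\bfx^{(l+1)}-\bfx^{(l)}\|_2^2 \leq \calE(\bfx^{(l)})$, which is even stronger than \eqref{eq:sufficient-decrease-condition}; the factor $(1-\ve)$ appears once one accounts for the inexactness in \eqref{eq:sub-opt-cond}.

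To handle the inexactness rigorously I would use \eqref{eq:sub-opt-cond} directly rather than exact minimality: since $\calE^{(l)}$ is $\beta$-strongly convex on the $\sfS^{(l)}$-block and $\bfu^{(l)}(\bfx^{(l+1)})\in\partial\calE^{(l)}(\bfx^{(l+1)})$, strong convexity gives $\calE^{(l)}(\bfx^{(l)}) \geq \calE^{(l)}(\bfx^{(l+1)}) + \langle \bfu^{(l)}(\bfx^{(l+1)}), \bfx^{(l)}-\bfx^{(l+1)}\rangle + \tfrac{\beta}{2}\|\bfx^{(l)}-\bfx^{(l+1)}\|_2^2$. Bounding the inner product by Cauchy–Schwarz and the norm estimate $\|\bfu^{(l)}(\bfx^{(l+1)})\|_2 \leq \tfrac{\beta}{2}\ve\|\bfx^{(l+1)}-\bfx^{(l)}\|_2$ yields $\calE^{(l)}(\bfx^{(l+1)}) + \tfrac{\beta}{2}(1-\ve)\|\bfx^{(l+1)}-\bfx^{(l)}\|_2^2 \leq \calE^{(l)}(\bfx^{(l)})$. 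Translating back from $\calE^{(l)}$ to $\calE$ via the concavity inequality above (and the identity $\calE^{(l)}(\bfx^{(l)}) = \calE(\bfx^{(l)})$) then produces \eqref{eq:sufficient-decrease-condition} with the stated constant; for $l=0$ where $\beta=0$ the inequality $\calE(\bfx^{(1)})\leq\calE(\bfx^{(0)})$ follows from the same argument with the proximal term absent. Part (ii) is then routine: nonincreasingness of $\{\calE(\bfx^{(l)})\}$ together with coercivity of $\calE$ (noted after \eqref{eq:object-func}) gives that $\{\bfx^{(l)}\}$ lies in a bounded sublevel set; summing \eqref{eq:sufficient-decrease-condition} over $l$ telescopes to $\tfrac{\beta}{2}(1-\ve)\sum_l \|\bfx^{(l+1)}-\bfx^{(l)}\|_2^2 \leq \calE(\bfx^{(1)}) - \inf\calE < \infty$, whence $\|\bfx^{(l+1)}-\bfx^{(l)}\|_2\to 0$.

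The main obstacle is the bookkeeping around the shrinking support: one must be careful that $\calE^{(l)}$ and $\calF^{(l)}$ are only defined/compared on the block $\sfS^{(l)}$ with the off-support groups frozen at zero, and that step~3 of InISSAPL guarantees $\supp_\sfG(\bfx^{(l+1)})\subseteq\sfS^{(l)}=\supp_\sfG(\bfx^{(l)})$, so that evaluating the \emph{full} objective $\calE$ at $\bfx^{(l)}$ and $\bfx^{(l+1)}$ really does coincide with evaluating the surrogates — there is no hidden contribution from groups outside $\sfS^{(l)}$, and the $\ell_{p,q}$ sum over $\sfG$ reduces to the sum over $\sfS^{(l)}$ in both cases. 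Once this is pinned down, the concavity inequality \eqref{eq:first-approx} and strong convexity do all the real work, and nothing depends on $r$, so the argument is uniform over $1\leq r\leq\infty$.
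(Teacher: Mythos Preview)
Your proposal is correct and follows essentially the same route as the paper's proof: identify $\calE^{(l)}(\bfx^{(l)})=\calE(\bfx^{(l)})$, use a subgradient inequality for the convex surrogate together with Cauchy--Schwarz and \eqref{eq:sub-opt-cond} to control the inexactness, and then invoke concavity \eqref{eq:first-approx} to pass from the surrogate back to $\calE(\bfx^{(l+1)})$; part~(ii) follows from coercivity and telescoping exactly as you outline. The only minor difference is that you invoke $\beta$-strong convexity of $\calE^{(l)}$, whereas the paper uses plain convexity of $\calF^{(l)}$ and extracts the quadratic term from the explicit proximal summand in $\calF^{(l)}(\bfx^{(l+1)})$; your route in fact yields the sharper constant $\tfrac{\beta}{2}(2-\ve)$ once the proximal term inside $\calE^{(l)}(\bfx^{(l+1)})$ is also accounted for in the concavity step, which of course implies the stated inequality.
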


\begin{proof}
Due to the fact that $\phi(0) = 0$, we have
\begin{equation}\label{eq:mm-eq1}
\begin{split}
	\calF^{(l)}(\bfx^{(l)}) & =\sum_{ \sfi \in \sfS^{(l)} } \phi(\|\bfx_{\sfi}^{(l)}\|_p) + F_r(\bfx^{(l)}) \\
	& =\sum_{ \sfi \in \sfG } \phi(\|\bfx_{\sfi}^{(l)}\|_p) + F_r(\bfx^{(l)}) = \calE(\bfx^{(l)}).
\end{split}
\end{equation}
When $\bfx \in \bbR^{N}$ and $\supp_G(\bfx) \subseteq \sfS^{(l)}$, we obtain
\begin{equation}\label{eq:mm-eq2}
\begin{split}
	\calF^{(l)}(\bfx) &= \sum_{ \sfi \in \sfS^{(l)} } \phi(\|\bfx_{\sfi}^{(l)}\|_p)
	+ \phi^{\prime}(\|\bfx_{\sfi}^{(l)}\|_p) \left(\|\bfx_{\sfi}\|_p - \|\bfx_{\sfi}^{(l)}\|_p\right)
+ F_r(\bfx)
	+ \frac{\beta}{2}\| \bfx - \bfx^{(l)} \|_{2}^{2}
\\
[~\text{by~\eqref{eq:first-approx}}~]~  &\geq  \sum_{ \sfi \in \sfS^{(l)} } \phi(\|\bfx_{\sfi}\|_p) 	+ F_r(\bfx) + \frac{\beta}{2}\| \bfx - \bfx^{(l)} \|_{2}^{2} \\
	&=  \sum_{\sfi \in \sfG } \phi(\|\bfx_{\sfi}\|_p) 	+ F_r(\bfx)
	+ \frac{\beta}{2}\| \bfx - \bfx^{(l)} \|_{2}^{2} \\
	&= \calE(\bfx)+ \frac{\beta}{2}\| \bfx - \bfx^{(l)} \|_{2}^{2}.
\end{split}
\end{equation}

Let $\widehat{\bfu}^{(l)}(\bfx)\in \partial \calF^{(l)}(\bfx)$. Then
\begin{equation}
\widehat{\bfu}_{\sfi,j}^{(l)}(\bfx)=\begin{cases}
\bfu_{\sfi,j}^{(l)}(\bfx),& \sfi\in \sfS^{(l)},j\in J_{\sfi},\\
\vec{\eta}_{\sfi,j}(\bfx)+\beta(\bfx-\bfx^{(l)}),&\sfi\in \sfG\setminus\sfS^{(l)},j\in J_{\sfi},
\end{cases}
\end{equation}
where $\bfu_{\sfi,j}^{(l)}(\bfx)$ is defined in~\eqref{eq:partial_calEL} and $\vec{\eta}_{\sfi,j}(\bfx)$ is defined in~\eqref{eq:partialFr}.
Since for any $ \sfi \in \sfG\setminus  \sfS^{(l)}$, $\bfx_{\sfi}^{(l+1)} = \bfx_{\sfi}^{(l)} = \bf{0}$, we have
\begin{equation}\label{eq:mm-eq3}
\begin{split}
	\langle \widehat{\bfu}^{(l)}(\bfx^{(l+1)}) , \bfx^{(l)} - \bfx^{(l+1)} \rangle
	& = \sum_{\sfi\in \sfS^{(l)}}\sum_{j \in J_{\sfi} } \widehat{\bfu}_{\sfi,j}^{(l)}(\bfx^{(l+1)})(\bfx_{\sfi,j}^{(l)} - \bfx_{\sfi,j}^{(l+1)})\\
	&\geq - \|\bfu^{(l)}_{\sfS^{(l)}}(\bfx^{(l+1)}) \|_{2}\cdot \|\bfx^{(l)}_{\sfS^{(l)}} - \bfx^{(l+1)}_{\sfS^{(l)}} \|_{2}\\
[~\text{by~\eqref{eq:sub-opt-cond}}~]~	&\geq - \frac{\beta}{2}\ve \| \bfx^{(l)} - \bfx^{(l+1)}\|_{2}^{2}.
\end{split}
\end{equation}

Putting~\eqref{eq:mm-eq1}, \eqref{eq:mm-eq2} and~\eqref{eq:mm-eq3} together, we obtain
\[
\begin{split}
	\calE(\bfx^{(l)}) = \calF^{(l)}(\bfx^{(l)}) & \geq \calF^{(l)}(\bfx^{(l+1)}) + \langle \widehat{\bfu}^{(l)}(\bfx^{(l+1)}) , \bfx^{(l)} - \bfx^{(l+1)} \rangle \\
	& \geq \calF^{(l)}(\bfx^{(l+1)}) - \frac{\beta}{2}\ve \| \bfx^{(l+1)} - \bfx^{(l)} \|_{2}^{2}\\
	& \geq \calE(\bfx^{(l+1)})+ \frac{\beta}{2}(1-\ve)\| \bfx^{(l+1)} - \bfx^{(l)} \|_{2}^{2}.
\end{split}	
\]
With the fact that $\calE(\bfx)$ is bounded from below and $ \frac{\beta}{2}(1-\ve) > 0$, it follows that $\set{\calE(\bfx^{(l)})}$ is nonincreasing and converges to a finite value as $l \to \infty$. Thus
\[
\lim_{l\to \infty} \| \bfx^{(l+1)} - \bfx^{(l)} \|_{2} = 0.
\]
Because $\calE(\bfx)$ is coercive, we know that $\set{\bfx^{(l)}}$ is bounded.

\end{proof}

The following lemma is the lower bound theory on the nonzero groups of the iteration sequence, which can be used to overcome the non-Lipschitz property.
\begin{lemma}\label{lem-bound-theory}
There are $ 0 < c < C < \infty, L>0$ such that
\begin{equation}\label{eq:bound-theory}
	either \quad \bfx_{\sfi}^{(l)} = 0  \quad or \quad c \leq \|\bfx_{\sfi}^{(l)}\|_p \leq C, \; \forall \sfi \in \sfG, \; \forall l \geq L.
\end{equation}
\end{lemma}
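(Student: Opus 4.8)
The proof splits into an easy upper bound and a substantive lower bound. For the upper bound $C$, Lemma~\ref{lem-bound-sufficient-decrease} already gives that $\set{\bfx^{(l)}}$ is bounded, say $\norm{\bfx^{(l)}}_2\le C_0$ for all $l$; since each $\bfx_{\sfi}^{(l)}$ is a coordinate subvector of $\bfx^{(l)}$, finite-dimensional norm equivalence yields $\norm{\bfx_{\sfi}^{(l)}}_p\le C$ with $C$ depending only on $p$ and $\max_{\sfi\in\sfG}N_{\sfi}$. So the entire content of the lemma is the lower bound $c>0$.

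For the lower bound the plan is to exploit that the group support has stabilized. Fix $L$ as in Lemma~\ref{lem-finite-num-iter}, so $\sfS^{(l)}\equiv\sfS:=\sfS^{(L)}$ for all $l\ge L$; hence for every $\sfi\in\sfS$ and every $l\ge L$ one has $\bfx_{\sfi}^{(l)}\neq\bf0$ and $\bfx_{\sfi}^{(l+1)}\neq\bf0$, so there is $j^{\ast}\in\supp(\bfx_{\sfi}^{(l+1)})$. The key is to combine the inexact optimality condition~\eqref{eq:sub-opt-cond} with the explicit decomposition $\bfu_{\sfi,j}^{(l)}(\bfx^{(l+1)})=\vec{\zeta}_{\sfi,j}^{(l)}(\bfx^{(l+1)})+\vec{\eta}_{\sfi,j}^{(l)}(\bfx^{(l+1)})+\beta(\bfx_{\sfi,j}^{(l+1)}-\bfx_{\sfi,j}^{(l)})$ in~\eqref{eq:partial_calEL} and to bound the term $\vec{\zeta}_{\sfi,j}^{(l)}$ from below. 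For $p=1$ this is immediate at $j=j^{\ast}$, since $\abs{\vec{\zeta}_{\sfi,j^{\ast}}^{(l)}(\bfx^{(l+1)})}=\phi^{\prime}(\norm{\bfx_{\sfi}^{(l)}}_1)=q\norm{\bfx_{\sfi}^{(l)}}_1^{q-1}$. For $p>1$ I would instead sum absolute values over $j\in\supp(\bfx_{\sfi}^{(l+1)})$ and use $\sum_j\abs{\bfx_{\sfi,j}^{(l+1)}}^{p-1}=\norm{\bfx_{\sfi}^{(l+1)}}_{p-1}^{p-1}\ge\norm{\bfx_{\sfi}^{(l+1)}}_{p}^{p-1}$ (Lemma~\ref{lemma:norm}, as $p-1\le p$) together with $1-p<0$, which gives $\sum_{j}\abs{\vec{\zeta}_{\sfi,j}^{(l)}(\bfx^{(l+1)})}\ge\phi^{\prime}(\norm{\bfx_{\sfi}^{(l)}}_p)=q\norm{\bfx_{\sfi}^{(l)}}_p^{q-1}$. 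In either case a triangle inequality applied to~\eqref{eq:partial_calEL} (and extending the sum to all $j\in J_{\sfi}$) yields $q\norm{\bfx_{\sfi}^{(l)}}_p^{q-1}\le B$, where $B$ collects the three remaining contributions.

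It then remains to verify that $B$ can be chosen independent of $\sfi$ and $l$. The $\bfu$-contribution is at most $\sqrt{N_{\sfi}}\,\norm{\bfu^{(l)}(\bfx^{(l+1)})}_2\le\sqrt{N_{\sfi}}\,\tfrac{\beta}{2}\ve\norm{\bfx^{(l+1)}-\bfx^{(l)}}_2$, bounded by Lemma~\ref{lem-bound-sufficient-decrease}; the proximal contribution $\beta\sum_{j\in J_{\sfi}}\abs{\bfx_{\sfi,j}^{(l+1)}-\bfx_{\sfi,j}^{(l)}}\le\beta\sqrt{N_{\sfi}}\norm{\bfx^{(l+1)}-\bfx^{(l)}}_2$ is bounded for the same reason; and the fidelity contribution $\sum_{j\in J_{\sfi}}\abs{\vec{\eta}_{\sfi,j}^{(l)}(\bfx^{(l+1)})}$ is bounded because $\set{\bfx^{(l)}}$, hence $\set{\bfA\bfx^{(l)}-\bfy}$, is bounded, treating $r>1$ directly and the set-valued cases $r=1$, $r=\infty$ via $\abs{\partial\abs{\cdot}}\le 1$ and $\norm{\bfu}_1\le 1$ respectively (cf.~\eqref{eq:partialFr}). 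Taking the maximum over the finite set $\sfG$ and the supremum over $l\ge L$ produces $B<\infty$. Since $q-1<0$, the inequality $q\norm{\bfx_{\sfi}^{(l)}}_p^{q-1}\le B$ is equivalent to $\norm{\bfx_{\sfi}^{(l)}}_p\ge(q/B)^{1/(1-q)}=:c>0$; combining this with $\bfx_{\sfi}^{(l)}=\bf0$ for $\sfi\notin\sfS$, $l\ge L$, and with the upper bound $C$, gives~\eqref{eq:bound-theory}.

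The main obstacle is the $p>1$ bookkeeping that converts the component-wise subgradient relations into a genuine lower bound on $\norm{\bfx_{\sfi}^{(l)}}_p$ itself — in particular keeping the exponent arithmetic between $\norm{\cdot}_{p-1}$ and $\norm{\cdot}_p$ and the negative power $1-p$ pointing in the right direction, exactly as in the proof of Proposition~\ref{lem-motivation} — together with the uniform boundedness of the fidelity subgradient $\vec{\eta}_{\sfi,j}^{(l)}$ simultaneously for all admissible $r$, including the set-valued cases $r=1$ and $r=\infty$. Everything else is a routine application of Lemmas~\ref{lem-finite-num-iter} and~\ref{lem-bound-sufficient-decrease}.
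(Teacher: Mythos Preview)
Your proof is correct and follows essentially the same approach as the paper: both pivot on the subgradient decomposition~\eqref{eq:partial_calEL}, sum $\abs{\vec{\zeta}_{\sfi,j}^{(l)}}$ over $j\in\supp(\bfx_{\sfi}^{(l+1)})$, and invoke the same $\norm{\cdot}_{p-1}\ge\norm{\cdot}_p$ inequality (Lemma~\ref{lemma:norm}) to extract $q\norm{\bfx_{\sfi}^{(l)}}_p^{q-1}$ as a lower bound on a uniformly bounded quantity. The only cosmetic difference is that the paper phrases the lower-bound step by contradiction (passing to a subsequence with $\norm{\bfx_{\sfi'}^{(l)}}_p\to 0$), whereas you argue directly and obtain the explicit constant $c=(q/B)^{1/(1-q)}$.
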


\begin{proof}
From Lemma~\ref{lem-finite-num-iter}, for any $\sfi \in \sfS^{(L)}$ and $l \geq L$, $\bfx_{\sfi}^{(l)} \neq \bf0$. The sequence has upper bound from Lemma \ref{lem-bound-sufficient-decrease},
\begin{equation*}
 \|\bfx_{\sfi}^{(l)}\|_p \leq C.
\end{equation*}
We now prove by contradiction that $\|\bfx_{\sfi}^{(l)}\|_p $ has nonzero lower bound for any $\sfi \in \sfS^{(L)}, l \geq L$.

Suppose there exists $\sfi^{\prime} \in \sfS^{(L)}$ for some subsequence $\bfx^{(l_k)}$, still denoted by $\bfx^{(l)}$, such that
\[
  \bfx_{\sfi^{\prime}}^{(l)} \neq {\bf0} \text{ and } \lim_{ l \to \infty} \bfx_{\sfi^{\prime}}^{(l)} = {\bf0}.
\]
By the subdifferential expression ~\eqref{eq:partial_calEL}, we have for $j\in \supp(\bfx_{\sfi^{\prime}}^{(l+1)})$, and $p\geq 1$,
\begin{equation}\label{eq:bound-entry-triangle-K}
\abs{\zeta_{\sfi^{\prime},j}^{(l)}(\bfx^{(l+1)})}\leq \abs{ \bfu_{\sfi^{\prime},j}^{(l)}(\bfx^{(l+1)})}+\abs{\eta_{\sfi^{\prime},j}^{(l)}(\bfx^{(l+1)})}+\beta\abs{\bfx_{\sfi^{\prime},j}^{(l+1)}-\bfx_{\sfi^{\prime},j}^{(l)}}
\end{equation}
with the left term,
\[
\phi^{\prime}(\|\bfx_{\sfi^{\prime}}^{(l)}\|_p)\cdot\|\bfx_{\sfi^{\prime}}^{(l+1)}\|_p^{1-p}\cdot|\bfx_{\sfi^{\prime},j}^{(l+1)}|^{p-1}\leq |\zeta_{\sfi^{\prime},j}^{(l)}(\bfx^{(l+1)})|. \]
Summing up all the terms for $j\in \supp(\bfx_{\sfi^{\prime}}^{(l+1)})$, we have
\begin{eqnarray*}
\sum_{j\in \supp(\bfx_{\sfi^{\prime}}^{(l+1)})}\abs{ \bfu_{\sfi^{\prime},j}^{(l)}(\bfx^{(l+1)})}+\abs{\eta_{\sfi^{\prime},j}^{(l)}(\bfx^{(l+1)})}+\beta\abs{\bfx_{\sfi^{\prime},j}^{(l+1)}-\bfx_{\sfi^{\prime},j}^{(l)}}&\geq&\phi^{\prime}(\norm{\bfx_{\sfi^{\prime}}^{(l)}}_p)\cdot\norm{\bfx_{\sfi^{\prime}}^{(l+1)}}_p^{1-p}\cdot\norm{\bfx_{\sfi^{\prime}}^{(l+1)}}^{p-1}_{p-1} \\
 &\geq& \phi^{\prime}(\|\bfx_{\sfi^{\prime}}^{(l)}\|_p)\\
 &=& q \|\bfx_{\sfi^{\prime}}^{(l)}\|_p^{q-1},
\end{eqnarray*}
where the second inequality holds from the same reason as the motivating proposition (Proposition~\ref{lem-motivation}).  It follows from the boundedness of $\set{\bfx^{(l)}}$ that $\abs{\eta_{\sfi^{\prime},j}^{(l)}(\bfx^{(l+1)})}+\beta\abs{\bfx_{\sfi^{\prime},j}^{(l+1)}-\bfx_{\sfi^{\prime},j}^{(l)}}$ is bounded.
The condition~\eqref{eq:sub-opt-cond} implies that $\abs{\bfu_{\sfi^{\prime},j}^{(l)}(\bfx^{(l+1)} ) }$ is also bounded. Thus the equation~\eqref{eq:bound-entry-triangle-K} is impossible to hold when $l \to \infty$ because of $0<q<1$.

\end{proof}

By combining Lemma~\ref{lem-bound-theory} and~Proposition \ref{prop-phi}, we can obtain the Lipschitz property over the support of group members.
\begin{equation}\label{eq:grad-Lip-cond-K}
	\abs{ \phi^{\prime}(\|\bfx_{\sfi}^{(l+1)}\|_p) - \phi^{\prime}(\|\bfx_{\sfi}^{(l)}\|_p) }
	\leq L_{c}\abs{\|\bfx_{\sfi}^{(l+1)}\|_p - \|\bfx_{\sfi}^{(l)}\|_p}
	\leq L_{c}\|\bfx_{\sfi}^{(l+1)}-\bfx_{\sfi}^{(l)}\|_p,~\sfi\in \sfS^{(L)},~l\geq L
\end{equation}
when $ p\geq 1$.

Using this property, we can prove the relative error condition {\em (H2)} by Lemma \ref{lem-relative-error-condition} in which the sequence $\bfv^{(l+1)}$ of $\partial \mathcal{E}(\bfx^{(l+1)})$ is well constructed though $\mathcal{E}(\bfx)$ is non-smooth.

\begin{lemma}\label{lem-relative-error-condition}
For each $l \geq L$, there exists $\bfv^{(l+1)}  \in \partial \calE(\bfx^{(l+1)})$  and constant $\widetilde{C}>0$ such that
\begin{equation}\label{eq:relative-error-condition}
	\| \bfv^{(l+1)} \|_{2}
	\leq \widetilde{C}\|\bfx^{(l+1)} - \bfx^{(l)} \|_{2}.
\end{equation}
\end{lemma}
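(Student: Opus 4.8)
The plan is to construct an explicit element $\bfv^{(l+1)}\in\partial\calE(\bfx^{(l+1)})$ by comparing the subdifferential of $\calE$ at $\bfx^{(l+1)}$ with the subdifferential element $\bfu^{(l)}(\bfx^{(l+1)})$ of the linearized functional $\calE^{(l)}$ that is available from the inexact optimality condition~\eqref{eq:sub-opt-cond}. Recall from~\eqref{eq:obj-func-subdiff-p2}, \eqref{eq:subdiff-first-term} and Lemma~\ref{lem-subdifferential} that a generic entry of an element of $\partial\calE(\bfx^{(l+1)})$, for $\sfi\in\sfS^{(L)}=\sfS^{(l)}$ and $j\in J_\sfi$, has the form $\phi^{\prime}(\|\bfx^{(l+1)}_{\sfi}\|_p)\,\|\bfx^{(l+1)}_\sfi\|_p^{1-p}|\bfx^{(l+1)}_{\sfi,j}|^{p-1}\sgn(\bfx^{(l+1)}_{\sfi,j})+\vec\eta_{\sfi,j}(\bfx^{(l+1)})$ (with the obvious set-valued modification for $p=1$ and $r\in\{1,\infty\}$), whereas the corresponding entry of $\bfu^{(l)}(\bfx^{(l+1)})$ given by~\eqref{eq:partial_calEL} uses the \emph{old} weight $\phi^{\prime}(\|\bfx^{(l)}_\sfi\|_p)$ and carries the extra proximal term $\beta(\bfx^{(l+1)}_{\sfi,j}-\bfx^{(l)}_{\sfi,j})$, and moreover its fidelity part $\vec\eta^{(l)}_{\sfi,j}(\bfx^{(l+1)})$ involves only the columns indexed by $\sfS^{(l)}$. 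So I would \emph{define} $\bfv^{(l+1)}$ by: on the groups $\sfi\in\sfS^{(L)}$, take the entry of $\bfu^{(l)}(\bfx^{(l+1)})$, subtract $\beta(\bfx^{(l+1)}_{\sfi,j}-\bfx^{(l)}_{\sfi,j})$, replace the factor $\phi^{\prime}(\|\bfx^{(l)}_\sfi\|_p)$ by $\phi^{\prime}(\|\bfx^{(l+1)}_\sfi\|_p)$ in $\vec\zeta^{(l)}_{\sfi,j}$, and replace $\vec\eta^{(l)}_{\sfi,j}(\bfx^{(l+1)})$ by $\vec\eta_{\sfi,j}(\bfx^{(l+1)})$; on the groups $\sfi\in\sfG\setminus\sfS^{(L)}$ (where $\bfx^{(l+1)}_\sfi=\bf0$), set the entry to be whatever value in the (large) subdifferential set of $\phi\circ g$ at $\bf0$ plus $\vec\eta_{\sfi,j}(\bfx^{(l+1)})$ makes $\bfv^{(l+1)}$ lie in $\partial\calE$ — since $\partial(\phi\circ g)(\bf0)=\mathbb{R}^{N_\sfi}$ (Lemma~\ref{lem-subdifferential}(i)), one simply picks $\bfv^{(l+1)}_{\sfi,j}:=0$ there and absorbs $\vec\eta_{\sfi,j}(\bfx^{(l+1)})$, which is legitimate precisely because the zero-group subdifferential is all of $\mathbb{R}^{N_\sfi}$. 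With this construction $\bfv^{(l+1)}\in\partial\calE(\bfx^{(l+1)})$ by~\eqref{eq:obj-func-subdiff-p2}.

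Next I would estimate $\|\bfv^{(l+1)}\|_2$ by splitting the difference $\bfv^{(l+1)}-\bfu^{(l)}(\bfx^{(l+1)})$ (restricted to $\sfS^{(L)}$, since outside it $\bfu^{(l)}$ is set to $0$) into three pieces: (a) the proximal term $-\beta(\bfx^{(l+1)}-\bfx^{(l)})$, whose norm is $\beta\|\bfx^{(l+1)}-\bfx^{(l)}\|_2$; (b) the weight-replacement term, a typical entry being $\bigl(\phi^{\prime}(\|\bfx^{(l+1)}_\sfi\|_p)-\phi^{\prime}(\|\bfx^{(l)}_\sfi\|_p)\bigr)\|\bfx^{(l+1)}_\sfi\|_p^{1-p}|\bfx^{(l+1)}_{\sfi,j}|^{p-1}\sgn(\bfx^{(l+1)}_{\sfi,j})$, which by the local Lipschitz inequality~\eqref{eq:grad-Lip-cond-K} is bounded by $L_c\|\bfx^{(l+1)}_\sfi-\bfx^{(l)}_\sfi\|_p$ times a factor $\|\bfx^{(l+1)}_\sfi\|_p^{1-p}|\bfx^{(l+1)}_{\sfi,j}|^{p-1}$ whose $\ell_1$-sum over $j\in\supp(\bfx^{(l+1)}_\sfi)$ equals $\|\bfx^{(l+1)}_\sfi\|_p^{1-p}\|\bfx^{(l+1)}_\sfi\|_{p-1}^{p-1}$, which is bounded above (uniformly in $l\geq L$) by Lemma~\ref{lemma:norm}/Lemma~\ref{lemma:norm-equivalence} together with the two-sided bound $c\le\|\bfx^{(l+1)}_\sfi\|_p\le C$ from Lemma~\ref{lem-bound-theory}; and (c) the fidelity-replacement term $\vec\eta_{\sfi,j}(\bfx^{(l+1)})-\vec\eta^{(l)}_{\sfi,j}(\bfx^{(l+1)})$, which is controlled because the only discrepancy between $\bfA$ and $\bfA_{\sfS^{(l)}}$ acts on columns $\sfi\in\sfG\setminus\sfS^{(l)}$ where $\bfx^{(l+1)}_\sfi=\bf0$, so in fact $\sum_{\sfm\in\sfS^{(l)}}A_{k,\sfm}\bfx^{(l+1)}_\sfm=\sum_{\sfm\in\sfG}A_{k,\sfm}\bfx^{(l+1)}_\sfm=(\bfA\bfx^{(l+1)}-\bfy)_k+y_k$ and this piece vanishes (for $r>1$ identically; for $r\in\{1,\infty\}$ one chooses the same subgradient/dual vector $\bfu$ for both, so the difference is $0$). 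Using~\eqref{eq:sub-opt-cond} to bound $\|\bfu^{(l)}_{\sfS^{(l)}}(\bfx^{(l+1)})\|_2\leq\frac{\beta}{2}\ve\|\bfx^{(l+1)}-\bfx^{(l)}\|_2$ and adding the three pieces via the triangle inequality, one gets $\|\bfv^{(l+1)}\|_2\leq\bigl(\tfrac{\beta}{2}\ve+\beta+L_c\cdot(\text{const})\bigr)\|\bfx^{(l+1)}-\bfx^{(l)}\|_2=:\widetilde C\|\bfx^{(l+1)}-\bfx^{(l)}\|_2$.

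The main obstacle, and the step requiring genuine care rather than routine bookkeeping, is justifying that the constructed $\bfv^{(l+1)}$ really is a \emph{valid} element of $\partial\calE(\bfx^{(l+1)})$ — in particular, handling the non-smooth cases $p=1$ (where $\vec\zeta^{(l)}_{\sfi,j}$ and the target subgradient are set-valued on the complement of $\supp(\bfx_\sfi)$, and one must check the chosen values still lie in $[-\phi^{\prime}(\|\bfx^{(l+1)}_\sfi\|_1),\phi^{\prime}(\|\bfx^{(l+1)}_\sfi\|_1)]$ after the weight replacement, which uses $\phi^{\prime}(\|\bfx^{(l)}_\sfi\|_1)\le\phi^{\prime}(\|\bfx^{(l+1)}_\sfi\|_1)$ is \emph{not} automatic, so one instead scales the chosen $k_j\in[-1,1]$) and $r\in\{1,\infty\}$ (where one must verify that the \emph{same} dual vector $\bfu$ with $\|\bfu\|_1\le1$, $\bfu^T(\bfA\bfx^{(l+1)}-\bfy)=\|\bfA\bfx^{(l+1)}-\bfy\|_\infty$ simultaneously realizes the inner-loop subgradient $\vec\eta^{(l)}$ and the full subgradient $\vec\eta$ — true because the argument $\bfA\bfx^{(l+1)}-\bfy$ is literally the same vector in both). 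Once the membership is pinned down, the norm estimate is the straightforward three-term split above, and the uniform boundedness of the factor in piece (b) is exactly what Lemma~\ref{lem-bound-theory} was proved for.
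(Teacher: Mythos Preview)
Your proposal is correct and follows essentially the same route as the paper: construct $\bfv^{(l+1)}$ by stripping the proximal term from $\bfu^{(l)}(\bfx^{(l+1)})$ and replacing the linearization weight $\phi'(\|\bfx^{(l)}_\sfi\|_p)$ by $\phi'(\|\bfx^{(l+1)}_\sfi\|_p)$, set the entries on $\sfG\setminus\sfS^{(L)}$ to zero (absorbed by Lemma~\ref{lem-subdifferential}(i)), observe that $\vec\eta^{(l)}(\bfx^{(l+1)})=\vec\eta(\bfx^{(l+1)})$ since $\bfx^{(l+1)}$ vanishes off $\sfS^{(l)}$, and then bound the weight-replacement error via~\eqref{eq:grad-Lip-cond-K}. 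The one place you differ from the paper is the $p=1$, $j\notin\supp(\bfx^{(l+1)}_\sfi)$ case: the paper \emph{projects} $\vec\zeta^{(l)}_{\sfi,j}(\bfx^{(l+1)})$ onto the target interval $[-\phi'(\|\bfx^{(l+1)}_\sfi\|_1),\phi'(\|\bfx^{(l+1)}_\sfi\|_1)]$ (their $\psi_{\sfi,j}$ in~\eqref{psiij}), whereas you propose to \emph{rescale} the underlying $k_j\in[-1,1]$, i.e.\ set $\psi_{\sfi,j}=\phi'(\|\bfx^{(l+1)}_\sfi\|_1)\cdot\bigl(\vec\zeta^{(l)}_{\sfi,j}/\phi'(\|\bfx^{(l)}_\sfi\|_1)\bigr)$; both choices land in the required interval and both give $|\psi_{\sfi,j}-\vec\zeta^{(l)}_{\sfi,j}|\le|\phi'(\|\bfx^{(l+1)}_\sfi\|_1)-\phi'(\|\bfx^{(l)}_\sfi\|_1)|$, so the final estimate is identical. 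Your rescaling is arguably a bit cleaner than the paper's four-case projection.
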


\begin{proof}
For $l\geq L$, the vector $\bfu^{(l)}(\bfx^{(l+1)})$ in the set of $\partial \mathcal{E}^{(l)}(\bfx^{(l+1)}) $ has the form in ~\eqref{eq:partial_calEL},
\begin{equation*}
\bfu_{\sfi,j}^{(l)}(\bfx^{(l+1)})=\vec{\zeta}_{\sfi,j}^{(l)}(\bfx^{(l+1)})+\vec{\eta}^{(l)}_{\sfi,j}(\bfx^{(l+1)})+\beta(\bfx_{\sfi,j}^{(l+1)}-\bfx_{\sfi,j}^{(l)}), ~\sfi\in \sfS^{(L)},~j\in J_{\sfi}.
\end{equation*}
Then the intermediate variable $\widehat{\bfv}^{(l+1)}$ is introduced as follows,
\[
\widehat{\bfv}^{(l+1)}_{\sfi,j}=\begin{cases}
\vec{\zeta}_{\sfi,j}^{(l)}(\bfx^{(l+1)})+\vec{\eta}^{(l)}_{\sfi,j}(\bfx^{(l+1)}),&\sfi\in \sfS^{(L)},j\in J_{\sfi},\\
\vec{0},&\sfi\in \sfG\setminus\sfS^{(L)},j\in J_{\sfi}.
\end{cases}
\]
The upper bound of $\widehat{\bfv}^{(l+1)}$ can be measured by the iterative error,
\begin{equation}\label{eq:subdiff-bound-eq1}
\begin{split}
	\| \widehat{\bfv}^{(l+1)} \|_{2} &= \sqrt{\sum_{\sfi \in \sfS^{(L)}}\sum_{j\in J_{\sfi}} |\widehat{\bfv}_{\sfi,j}^{(l+1)}|^{2}}
	 = \sqrt{\sum_{\sfi \in \sfS^{(L)}}\sum_{j\in J_{\sfi}} |\bfu_{\sfi,j}^{(l)}(\bfx^{(l+1)}) - \beta (\bfx_{\sfi,j}^{(l+1)} - \bfx_{\sfi,j}^{(l)}) |^{2}} \\
	& \leq \| \bfu^{(l)}_{\sfS^{(L)}}(\bfx^{(l+1)}) \|_{2} + \beta \|\bfx^{(l+1)} - \bfx^{(l)} \|_{2} \\
[~\text{by~\eqref{eq:sub-opt-cond}}~]~	& \leq \frac{\beta}{2}( \ve + 2 ) \|\bfx^{(l+1)} - \bfx^{(l)} \|_{2}.
\end{split}
\end{equation}

Noting the difference of $\partial \mathcal{E}^{(l)}(\bfx^{(l+1)})$ and $\partial \mathcal{E}(\bfx^{(l+1)})$, we specially construct $\bfv^{(l+1)}$ to be the form,
\[
{\bfv}^{(l+1)}_{\sfi,j}=\begin{cases}
\vec{\zeta}_{\sfi,j}(\bfx^{(l+1)})+\vec{\eta}^{(l)}_{\sfi,j}(\bfx^{(l+1)}),&\sfi\in \sfS^{(L)},j\in J_{\sfi},\\
\vec{0},&\sfi\in \sfG\setminus\sfS^{(L)},j\in J_{\sfi}.
\end{cases}
\]
where $\vec{\eta}^{(l)}_{\sfi,j}(\bfx^{(l+1)})$ is the same as the part of $\widehat{\bfv}_{\sfi,j}^{(l+1)}$ and
\begin{equation}\label{zetaij}
\vec{\zeta}_{\sfi,j}(\bfx^{(l+1)})=\begin{cases}
\phi^{\prime}(\|\bfx_{\sfi}^{(l+1)}\|_p)\|\bfx_{\sfi}^{(l+1)}\|_p^{1-p}|\bfx_{\sfi,j}^{(l+1)}|^{p-1}\cdot\sgn(\bfx_{\sfi,j}^{(l+1)}),&\sfi\in \sfS^{(L)},p>1,\\
\phi^{\prime}(\|\bfx_{\sfi}^{(l+1)}\|_1)\sgn(\bfx_{\sfi,j}^{(l+1)}), &\sfi\in \sfS^{(L)},p=1, j\in \supp(\bfx_{\sfi}^{(l+1)}),\\
\psi_{\sfi,j},&\sfi\in \sfS^{(L)},p=1, j\notin \supp(\bfx_{\sfi}^{(l+1)}).
\end{cases}
\end{equation}
Here $\psi_{\sfi,j}$ in $\vec{\zeta}_{\sfi,j}(\bfx^{(l+1)})$ is to be defined by the requirement of $\bfv^{(l+1)}\in \partial\mathcal{E}(\bfx^{(l+1)})$. On one hand, by Lemma \ref{lem-subdifferential} {\em (i)} and  (\ref{eq:obj-func-subdiff-p2})-(\ref{eq:subdiff-first-term}), for $\sfi\in\sfG\setminus \sfS^{(L)}$, $\partial(\phi\circ g)(\bfx_{\sfi})=\Pi_{j\in J_{\sfi}}(-\infty,+\infty)$ and the set $\partial F_r(\bfx^{(l+1)})$ is bounded, then $\bfv_{\sfi}^{(l+1)}$ belongs to the corresponding entries of the element in  $\partial \mathcal{E}(\bfx^{(l+1)})$. On the other hand, by Lemma \ref{lem-subdifferential} {\em (ii)} and (\ref{eq:obj-func-subdiff-p2})-(\ref{eq:subdiff-first-term}), for $\sfi\in \sfS^{(L)}$, it can be checked that if $\psi_{\sfi,j}$ satisfies $|\psi_{\sfi,j}|\leq q\|\bfx_{\sfi}^{(l+1)}\|_1^{q-1}$, ~$\vec{\zeta}_{\sfi}(\bfx)$ will be in $\partial (\phi\circ g)(\bfx_{\sfi})$. Thus $\bfv_{\sfi}^{(l+1)}$ also belongs to the corresponding entries of the element in  $\partial \mathcal{E}(\bfx^{(l+1)})$. Therefore, the left is to construct $\psi_{\sfi,j}$. It is more technical. $\psi_{\sfi,j}$ is determined by estimating the $\ell^1$ error of $\bfv^{(l+1)}$ and $\widehat{\bfv}^{(l+1)}$ in the case of $p=1,j\notin \supp(\bfx_{\sfi}^{(l+1)})$ later. Thus, the main idea of constructing $\psi_{\sfi,j}$ is to compare $\psi_{\sfi,j}\in[-q\|\bfx_{\sfi}^{(l+1)}\|_1^{q-1},q\|\bfx_{\sfi}^{(l+1)}\|_1^{q-1}]$ and $\vec{\zeta}_{\sfi,j}^{(l)}(\bfx^{(l+1)})\in[-q\|\bfx_{\sfi}^{(l)}\|_1^{q-1},q\|\bfx_{\sfi}^{(l)}\|_1^{q-1}]$ in (\ref{eq:partial_calEL}). That is, let $I=[-q\|\bfx_{\sfi}^{(l+1)}\|_1^{q-1},q\|\bfx_{\sfi}^{(l+1)}\|_1^{q-1}]$, if $\vec{\zeta}_{\sfi,j}^{(l)}(\bfx^{(l+1)})\in I$, we choose it. Otherwise, we choose the nearest point in $I$. Hence we choose
\begin{equation}\label{psiij}
\psi_{\sfi,j}=\begin{cases}
\vec{\zeta}_{\sfi,j}^{(l)}(\bfx^{(l+1)}),&\mbox{ if }\|\bfx_{\sfi}^{(l)}\|_1\geq \|\bfx_{\sfi}^{(l+1)}\|_1,\\
\vec{\zeta}_{\sfi,j}^{(l)}(\bfx^{(l+1)}),&\mbox{ if }\|\bfx_{\sfi}^{(l)}\|_1<\|\bfx_{\sfi}^{(l+1)}\|_1 \mbox{ and }|\vec{\zeta}_{\sfi,j}^{(l)}(\bfx^{(l+1)})|\leq q\|\bfx_{\sfi}^{(l+1)}\|_1^{q-1},\\
-q\|\bfx_{\sfi}^{(l+1)}\|_1^{q-1} ,  &\mbox{ if }\|\bfx_{\sfi}^{(l)}\|_1<\|\bfx_{\sfi}^{(l+1)}\|_1 \mbox{ and }\vec{\zeta}_{\sfi,j}^{(l)}(\bfx^{(l+1)})\in\left[-q\|\bfx_{\sfi}^{(l)}\|_1^{q-1},-q\|\bfx_{\sfi}^{(l+1)}\|_1^{q-1}\right),\\
q\|\bfx_{\sfi}^{(l+1)}\|_1^{q-1} ,  &\mbox{ if }\|\bfx_{\sfi}^{(l)}\|_1<\|\bfx_{\sfi}^{(l+1)}\|_1 \mbox{ and }\vec{\zeta}_{\sfi,j}^{(l)}(\bfx^{(l+1)})\in\left(q\|\bfx_{\sfi}^{(l+1)}\|_1^{q-1},q\|\bfx_{\sfi}^{(l)}\|_1^{q-1}\right].\\
\end{cases}
\end{equation}
where $\vec{\zeta}_{\sfi,j}^{(l)}(\bfx^{(l+1)})$ is the part of $\bfu_{\sfi,j}^{(l)}(\bfx^{(l+1)})$.
Noting that $0<q<1$, we can check that $|\psi_{\sfi,j}|\leq q\|\bfx_{\sfi}^{(l+1)}\|_1^{q-1}$.

After constructing $\vec\zeta_{\sfi,j}(\bfx^{(l+1)})$, we can now measure the difference between $\bfv^{(l+1)}$ and $\widehat{\bfv}^{(l+1)}$. We divide this measurement into two cases: $p>1$ and $p=1$. For $p>1$, the $L^1$ norm of the difference can be bounded by
\begin{equation}\label{eq:subdiff-bound-eq2}
\begin{split}
\| \bfv^{(l+1)} - \widehat{\bfv}^{(l+1)} \|_{1} =& \sum_{\sfi \in \sfS^{(L)}}\sum_{j\in J_{\sfi}}\abs{\vec{\zeta}_{\sfi,j}(\bfx^{(l+1)})- \vec{\zeta}_{\sfi,j}^{(l)}(\bfx^{(l+1)})  }      \\
    =& \sum_{\sfi \in \sfS^{(L)}}\sum_{j\in J_{\sfi}}\abs{\phi^{\prime}(\norm{\bfx_{\sfi}^{(l+1)}}_p)-\phi^{\prime}(\norm{\bfx_{\sfi}^{(l)}}_p)   }\cdot\norm{\bfx_{\sfi}^{(l+1)}}_p^{1-p}\cdot \abs{\bfx_{\sfi,j}^{(l+1)}}^{p-1}\\
    =&\sum_{\sfi \in \sfS^{(L)}}\abs{\phi^{\prime}(\norm{\bfx_{\sfi}^{(l+1)}}_p)-\phi^{\prime}(\norm{\bfx_{\sfi}^{(l)}}_p)   }\cdot\norm{\bfx_{\sfi}^{(l+1)}}_p^{1-p}\cdot \norm{\bfx_{\sfi}^{(l+1)}}^{p-1}_{p-1}\\
[~\text{by~\eqref{eq:grad-Lip-cond-K} and Lemma ~\ref{lemma:norm-equivalence} }~]~	  \leq & L_{c}\cdot C_s \sum_{\sfi\in \sfS^{(L)}}  \norm{\bfx_{\sfi}^{(l+1)}-\bfx_{\sfi}^{(l)}}_p \cdot\norm{\bfx_{\sfi}^{(l+1)}}_p^{1-p}\cdot \norm{\bfx_{\sfi}^{(l+1)}}^{p-1}_{p} \\
    \leq& L_{c}\cdot C_s\cdot C_p \|\bfx^{(l+1)} - \bfx^{(l)} \|_{2},
    \end{split}
\end{equation}
where $C_p$ is also the coefficient of norm equivalence. For $p=1$, it follows,
\begin{equation}\label{eq:subdiff-bound-eq3}
\begin{split}
\| \bfv^{(l+1)} - \widehat{\bfv}^{(l+1)} \|_{1}=&\sum_{\sfi \in \sfS^{(L)}}\sum_{j\in \supp(\bfx_{\sfi}^{(l+1)})}\abs{\vec{\zeta}_{\sfi,j}(\bfx^{(l+1)})- \vec{\zeta}_{\sfi,j}^{(l)}(\bfx^{(l+1)})}\\
&+\sum_{\sfi \in \sfS^{(L)}}\sum_{j\notin \supp(\bfx_{\sfi}^{(l+1)})}\abs{\psi_{\sfi,j}- \vec{\zeta}_{\sfi,j}^{(l)}(\bfx^{(l+1)})}\\
\leq & \sum_{\sfi \in \sfS^{(L)}}\sum_{j\in \supp(\bfx_{\sfi}^{(l+1)})}\abs{\phi^{\prime}(\norm{\bfx_{\sfi}^{(l+1)}}_1)-\phi^{\prime}(\norm{\bfx_{\sfi}^{(l)}}_1)   }\cdot\abs{\sgn(\bfx_{\sfi,j}^{(l+1)})}\\
&+\sum_{\sfi \in \sfS^{(L)}}\sum_{j\notin \supp(\bfx_{\sfi}^{(l+1)})}\abs{q\|\bfx_{\sfi}^{(l+1)}\|_1^{q-1}-q\|\bfx_{\sfi}^{(l)}\|_1^{q-1}   }\\
=& \sum_{\sfi \in \sfS^{(L)}}\sum_{j\in J_{\sfi}}\abs{\phi^{\prime}(\norm{\bfx_{\sfi}^{(l+1)}}_1)-\phi^{\prime}(\norm{\bfx_{\sfi}^{(l)}}_1)   }\\
\leq & L_c \|\bfx^{(l+1)} - \bfx^{(l)} \|_{1}\\
\leq & L_c\cdot C_p \|\bfx^{(l+1)} - \bfx^{(l)} \|_{2}.
\end{split}
\end{equation}
where the first inequality comes from (\ref{zetaij}), (\ref{psiij}) and (\ref{eq:partial_calEL}).

Combining~\eqref{eq:subdiff-bound-eq1},~\eqref{eq:subdiff-bound-eq2} and \eqref{eq:subdiff-bound-eq3} yields:
\[
\begin{split}
	\|\bfv^{(l+1)}\|_2\leq \| \bfv^{(l+1)} \|_{1} & \leq \| \bfv^{(l+1)} - \widehat{\bfv}^{(l+1)} \|_{1} +\sqrt{N} \| \widehat{\bfv}^{(l+1)} \|_{2} \\
	& \leq \widetilde{C} \|\bfx^{(l+1)} - \bfx^{(l)} \|_{2},
\end{split}
\]
where $\widetilde{C}=\max\{L_c C_s C_p,L_c C_p,\sqrt{N}\beta(2+\varepsilon)/2\}$.

\end{proof}

{\em (H3)} is the continuity condition, and it holds naturally. From Appendix~\ref{sec:appendix}, we know that $\mathcal{E}(\bfx)$ satisfies KL property. Finally, we establish our main convergence result.  


\begin{theorem}\label{thm-global-convergence}
The iterative sequence $\set{\bfx^{(l)}}$  generated by {\rm InISSAPL} algorithm converges globally to the limit point $\bfx^{\ast}$, which is a stationary point of problem~\eqref{eq:lp-mini-prob}.
\end{theorem}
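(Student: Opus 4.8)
The plan is to obtain Theorem~\ref{thm-global-convergence} as a consequence of the abstract Kurdyka--\L ojasiewicz descent framework recalled in Theorem~\ref{thm-convergence}, once the three hypotheses \emph{(H1)--(H3)} and the KL property have been checked for the sequence $\set{\bfx^{(l)}}$ and the objective $\calE$. First I would pass to the tail $l\geq L$: by Lemma~\ref{lem-finite-num-iter} the group support is frozen, $\sfS^{(l)}\equiv\sfS^{(L)}$, so from step $L$ on the iterates lie in the fixed coordinate subspace indexed by $\sfS^{(L)}$ (all other group components being identically $\bf0$), and $\set{\bfx^{(l)}}_{l\geq L}$ is a genuine sequence in $\bbR^N$ to which Theorem~\ref{thm-convergence} can be applied; discarding the first $L$ terms does not affect convergence of the whole sequence.

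Next I would verify the three conditions. \emph{(H1)}, sufficient decrease, is precisely Lemma~\ref{lem-bound-sufficient-decrease}(i): $\calE(\bfx^{(l+1)})+\tfrac{\beta}{2}(1-\ve)\norm{\bfx^{(l+1)}-\bfx^{(l)}}_2^2\leq\calE(\bfx^{(l)})$ with coefficient $\tfrac{\beta}{2}(1-\ve)>0$. \emph{(H2)}, the relative error estimate, is Lemma~\ref{lem-relative-error-condition}: there exists $\bfv^{(l+1)}\in\partial\calE(\bfx^{(l+1)})$ with $\norm{\bfv^{(l+1)}}_2\leq\widetilde{C}\norm{\bfx^{(l+1)}-\bfx^{(l)}}_2$. \emph{(H3)}, the continuity condition, follows from boundedness and continuity of $\calE$: by Lemma~\ref{lem-bound-sufficient-decrease}(ii) the sequence is bounded, hence has a subsequence $\bfx^{(l_k)}\to\bfx^{\ast}$; since $\set{\calE(\bfx^{(l)})}$ is nonincreasing and bounded below it converges to some value, and continuity of $\calE$ forces that value to equal $\calE(\bfx^{\ast})$, so $\calE(\bfx^{(l_k)})\to\calE(\bfx^{\ast})$. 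Finally $\calE$ is a KL function, as established in Appendix~\ref{sec:appendix}.

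With \emph{(H1)--(H3)} and the KL property in hand, Theorem~\ref{thm-convergence} yields $\sum_{l\geq L}\norm{\bfx^{(l+1)}-\bfx^{(l)}}_2<\infty$; in particular $\set{\bfx^{(l)}}$ is Cauchy and converges to a single limit $\bfx^{\ast}$, which coincides with the subsequential limit found above. It then remains to show $\bf0\in\partial\calE(\bfx^{\ast})$, i.e. stationarity in the sense of~\eqref{eq:first-opt-cond}. From \emph{(H2)} we have $\norm{\bfv^{(l+1)}}_2\leq\widetilde{C}\norm{\bfx^{(l+1)}-\bfx^{(l)}}_2\to0$ by Lemma~\ref{lem-bound-sufficient-decrease}(ii), while $\bfx^{(l+1)}\to\bfx^{\ast}$ and $\calE(\bfx^{(l+1)})\to\calE(\bfx^{\ast})$; passing to the limit in $\bfv^{(l+1)}\in\partial\calE(\bfx^{(l+1)})$ and using that $\partial\calE$ has closed graph (outer semicontinuity of the limiting subdifferential) gives $\bf0\in\partial\calE(\bfx^{\ast})$. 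Thus $\bfx^{\ast}$ is a stationary point of~\eqref{eq:lp-mini-prob}.

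The substantive difficulties have already been absorbed into the preceding lemmas: the non-Lipschitz behaviour of $\calE$ near zero is tamed by the lower bound theory (Lemma~\ref{lem-bound-theory}), which turns $\phi'$ into a locally Lipschitz map on the relevant range and yields~\eqref{eq:grad-Lip-cond-K}, and the non-smoothness of the $\ell_{1,q}$ and $\ell_1/\ell_\infty$ terms is handled by the careful construction of $\bfv^{(l+1)}$ (notably the choice of $\psi_{\sfi,j}$) in the proof of \emph{(H2)}. Within the present theorem the only point needing care is the final limiting step: one should note that $\calE$ is \emph{continuous} (everywhere finite), so the outer semicontinuity of $\partial\calE$ applies with no lower-semicontinuity caveat. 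Hence the whole argument reduces to assembling Lemmas~\ref{lem-finite-num-iter}, \ref{lem-bound-sufficient-decrease}, \ref{lem-relative-error-condition} and the KL property into Theorem~\ref{thm-convergence}.
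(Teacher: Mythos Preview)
Your proposal is correct and follows essentially the same route as the paper: verify \emph{(H1)--(H3)} via Lemmas~\ref{lem-bound-sufficient-decrease} and~\ref{lem-relative-error-condition} together with boundedness and continuity of $\calE$, invoke the KL property from the appendix, and then apply Theorem~\ref{thm-convergence}. Your write-up is somewhat more explicit (passing to the tail $l\geq L$, spelling out the closed-graph argument for stationarity), but the paper absorbs stationarity into the conclusion of Theorem~\ref{thm-convergence} and otherwise proceeds identically.
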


\begin{proof}

Since $\set{\bfx^{(l)}}$ is bounded (Lemma~\ref{lem-bound-theory}), there exists a subsequence $(\bfx^{(k_{l})})$ and $\bfx^{\ast}$ such that
\begin{equation}\label{eq:continuity-condition}
  \bfx^{(k_{l})} \to \bfx^{\ast} \text{ and } \calE(\bfx^{(k_{l})}) \to \calE(\bfx^{\ast}),
\text{as } l \to \infty.
\end{equation}

By combing~\eqref{eq:sufficient-decrease-condition},~\eqref{eq:relative-error-condition} and~\eqref{eq:continuity-condition}, and by Theorem \ref{thm-convergence} in the appendix, the sequence $\set{\bfx^{(l)}}$ converges globally to the limit point $\bfx^{\ast}$, which is a stationary point of $\calE$.
\end{proof}

\section{Algorithm Implementation}
\label{sec:imlpem}

For each iteration step in InISSAPL algorithm, it is a weighted $\ell_{p,1}-\ell_{r} (~p\geq 1,~r\geq 1)$ minimization in essence. It is convex and the  inexact inner loop is allowed in implementation. Some standard methods like ADMM~\cite{Boyd2011Distributed}, split Bregman method~\cite{Goldstein2009Split,Yin2008Bregman} and primal-dual algorithm~\cite{Chambolle2011first,Esser2010General} can be used to efficiently solve it. Here we adopt scaled ADMM.

\subsection{Scaled ADMM}
a
At each $l$-th step in InISSAPL, it is equivalently to solving~\eqref{eq:prob-z} by
\begin{equation}\label{eq:alg-supproblem}
  \min_{\bfx_{\sfS^{(l)}}} \sum_{ \sfi \in \sfS^{(l)} } \phi^{\prime}(\|\bfx_{\sfi}^{(l)}\|_p) \|\bfx_{\sfi}\|_p +F_r^{(l)}(\bfx) + \frac{\beta}{2}\| \bfx - \bfx^{(l)} \|_{2}^{2}
\end{equation}
over group support set $\sfS^{(l)}$. For the brevity of notations, we still use the boldface $\bfx,\bfy,\bfz,\cdots$ to denote the vectors  on $\sfS^{(l)}$ in the following.

Equivalently, we can solve the following constrained optimization problem by
\begin{equation}\label{eq:alg-constrained}
\begin{split}
  &\min_{\bfz} \sum_{ \sfi \in \sfS^{(l)} } \phi^{\prime}(\|\bfx_{\sfi}^{(l)}\|_p) \norm{\bfz_{\sfi}}_p +f_r(\bfs) + \frac{\beta}{2}\| \bfx - \bfx^{(l)} \|_{2}^{2}\\
  &\mbox{s.t. } \bfz=\bfx,~\bfs=\bfA_{\sfS^{(l)}}\bfx-\bfy,
  \end{split}
\end{equation}
where
\begin{equation*}
f_r(\bfs)= \begin{cases}
\frac{1}{r\alpha}\|\bfs\|_r^r,& r\geq 1,\\
\frac{1}{\alpha}\|\bfs\|_{\infty},& r=\infty.
\end{cases}
\end{equation*}

We introduce the penalty parameters $\rho_1,\rho_2>0$ (denoted by $\rho=(\rho_1,\rho_2)$ ) and the Lagrangian multipliers $\vec{\lambda},\vec{\mu}$, then the scaled augmented Lagrangian functional for the weighted problem~\eqref{eq:alg-constrained} at $l$-th step is the following:
\[
\begin{split}
   \mathcal{L}_{\rho}^{(l)}(\bfx,\bfz,\bfs;\vec{\lambda},\vec{\mu}) & = \sum_{ \sfi \in \sfS^{(l)} } \phi^{\prime}(\|\bfx_{\sfi}^{(l)}\|_p) \norm{\bfz_{\sfi}}_p	+f_r(\bfs) + \frac{\rho_1}{2}\left(\norm{\bfA_{\sfS^{(l)}}\bfx-\bfy-\bfs+\vec{\lambda}}_2^2-\norm{\vec{\lambda}}_2^2\right)  \\
   & \phantom{=;} + \frac{\rho_2}{2}\left(\norm{\bfx-\bfz+\vec{\mu}}_2^2-\norm{\vec{\mu}}_2^2\right)+\frac{\beta}{2}\| \bfx - \bfx^{(l)} \|_{2}^{2}.
\end{split}
\]

The scaled ADMM for solving~\eqref{eq:alg-constrained} is described as follows. When there is no confusion with the notations, we use $\bar{\bfx}^{(i)},\bfs^{(i)},\bfz^{(i)}$ to denote the $i$-th iteration step in the inner loop of scaled ADDM.

\vspace{3mm}
\begin{mdframed}[frametitle = {Scaled ADMM: Scaled Alternating Direction Method of Multipliers for Solving~\eqref{eq:alg-constrained}}, frametitlerule = true]

\noindent{\bf Initialization:} Start with $\bar{\bfx}^{(0)} = \bfx^{(l)}_{\sfS^{(l)}}, \vec{\lambda}^{(0)} = \mathbf{0}, \vec{\mu}^{(0)}= \mathbf{0}$.

\noindent {\bf Iteration:} For $i= 0, 1, \ldots, \textmd{MAXit}$,
\begin{enumerate}
\renewcommand{\labelenumi}{ \arabic{enumi}.}
  \item Compute
\begin{equation}\label{eq:admm-s-t}
  (\bfz^{(i+1)}, \bfs^{(i+1)}) = \arg \min_{\bfz, \bfs} \mathcal{L}^{(l)}_{\rho}(\bar\bfx^{(i)},\bfz,\bfs;\vec{\lambda}^{(i)},\vec{\mu}^{(i)}).
\end{equation}
  \item  Compute
\begin{equation}\label{eq:admm-x}
  \bar\bfx^{(i+1)} = \arg \min_{\bar{\bfx}} \mathcal{L}^{(l)}_{\rho}(\bar\bfx,\bfz^{(i+1)},\bfs^{(i+1)};\vec{\lambda}^{(i)},\vec{\mu}^{(i)}).
\end{equation}
  \item Update
\begin{align}
  \boldsymbol\lambda^{(i+1)} & = \boldsymbol\lambda^{(i)} +  \bfA\bar{\bfx}^{(i+1)}-\bfy-\bfs^{(i+1)},\\
  \boldsymbol\mu^{(i+1)} & = \boldsymbol\mu^{(i)} +  \bar{\bfx}^{(i+1)}-\bfz^{(i+1)}.
\end{align}
\end{enumerate}
\end{mdframed}

\vspace{2mm}

\subsection{Solving ~\eqref{eq:admm-s-t} and~\eqref{eq:admm-x}}
The subproblems~\eqref{eq:admm-s-t} and~\eqref{eq:admm-x} can be  efficiently solved.
\begin{enumerate}
\item The minimization subproblem in~\eqref{eq:admm-s-t} is equivalently to solving
\begin{equation*}
\min_{\bfz, \bfs} \sum_{ \sfi \in \sfS^{(l)} } \phi^{\prime}(\norm{\bfx_{\sfi}^{(l)}}_p) \norm{\bfz_{\sfi}}_p	+ f_r(\bfs) + \frac{\rho_1}{2}\norm{\bfA_{\sfS^{(l)}}\bar{\bfx}^{(i)}-\bfy-\bfs+\vec{\lambda}^{(i)}}_2^2
    + \frac{\rho_2}{2}\norm{\bar{\bfx}^{(i)}-\bfz+\vec{\mu}^{(i)}}_2^2,
\end{equation*}
which can be separated into two independent subproblems.
\begin{enumerate}
\item $\bfz$-minimization problem:
\begin{equation*}
\min_{\bfz} \sum_{ \sfi \in \sfS^{(l)} } \phi^{\prime}(\norm{\bfx_{\sfi}^{(l)}}_p) \norm{\bfz_{\sfi}}_p
    + \frac{\rho_2}{2}\norm{\bar{\bfx}^{(i)}-\bfz+\vec{\mu}^{(i)}}_2^2.
\end{equation*}
For $p=1$, we have the explicit solution by~\cite{Yin2008Bregman},
\begin{equation*}
\bfz_{\sfi,j}^{(i+1)}=\sgn{\left(\bar{\bfx}^{(i)}_{\sfi,j}+\vec{\mu}^{(i)}_{\sfi,j}\right)}\cdot \max \left\{\abs{\bar{\bfx}^{(i)}_{\sfi,j}+\vec{\mu}^{(i)}_{\sfi,j}}-w_{\sfi}/\rho_2,0   \right\},~w_{\sfi}= \phi^{\prime}(\|\bfx_{\sfi}^{(l)}\|_1).
\end{equation*}
For $p=2$, this group problem is separable, the minimizer of it can be also explicitly given by the shrinkage lemma in~\cite{Yangj2011,wen2010fast,Wu2010Augmented}:
\begin{equation*}
\bfz_{\sfi}(\bfv_{\sfi})=\max\{ \|\bfv_{\sfi}\|_2-\phi^{\prime}(\|\bfx_{\sfi}^{(l)}\|_2)/\rho_2,0 \}\frac{\bfv_{\sfi}}{\|\bfv_{\sfi}\|_2},~~\bfv_{\sfi}=\bar{\bfx}_{\sfi}^{(i)}+\vec{\mu}_{\sfi}^{(i)}.
\end{equation*}
For the general $p>1$, it is strongly convex, we can use standard nonlinear numerical methods, such as Newton method to solve it.

\item $\bfs$-minimization problem:
\begin{equation*}
\min_{\bfs} f_r(\bfs) + \frac{\rho_1}{2}\norm{\bfA_{\sfS^{(l)}}\bar{\bfx}^{(i)}-\bfy-\bfs+\vec{\lambda}^{(i)}}_2^2.
\end{equation*}
For $r=1$, it is a same problem as $\bfz$-minimization one for $p=1$, we omit it here.

For $r=2$, the solution can be obtained easily,
\begin{equation*}
\bfs_{\sfi,j}=\alpha\rho_1\bfv_{\sfi,j}/(1+\alpha\rho_1), ~~\bfv=\bfA_{\sfS^{(l)}}\bar{\bfx}^{(i)}-\bfy+\vec{\lambda}^{(i)}.
\end{equation*}

For general $r>1$, we also can use the standard nonlinear numerical methods to solve it efficiently.

For $r=\infty$, the $\bfs$-minimization problem reads,
\begin{equation*}
\min_{\bfs} \frac{1}{\alpha}\| \bfs \|_{\infty} +\frac{\rho_1}{2}\norm{\bfs-\bfv}_2^2.
\end{equation*}
Let $\widetilde{\bfs},\widetilde{\bfv}$ are sorted from $\bfs,\bfv$ by the absolute values of elements of the known vector $\bfv$ in ascending order, it is equivalent to solving,
\begin{equation}\label{s-problem:infinity}
\min_{\widetilde{\bfs}} \| \widetilde{\bfs} \|_{\infty} + \beta\norm{\widetilde{\bfs}-\widetilde{\bfv}}_2^2,~~\beta=\alpha\rho_1/2.
\end{equation}
Its optimal solution can be obtained by Theorem \ref{thm:infty} in the next subsection,
\begin{equation*}\label{eq:optimal-solu-infty}
\widetilde{\bfs}^{\ast}=\begin{cases}
\widetilde{\bfv}_i  ,&i<i^{\ast}\\
\sgn(\widetilde{\bfv}_i)t_{i^{\ast}},&i\geq i^{\ast},
\end{cases}
\end{equation*}
where $i^{\ast}\in\{0,1,,2,\cdots,n-1\}$ and $t_{i^{\ast}}$ satisfies~\eqref{eq-ti-star}.
 \end{enumerate}
\item The minimization problem in~\eqref{eq:admm-x} is equivalent to solving
\begin{equation*}
\min_{\bar{\vec{x}}} \frac{\rho_1}{2}\norm{\bfA_{\sfS^{(l)}}\bar{\bfx}-\bfy-\bfs^{(i+1)}+\vec{\lambda}^{(i)}}_2^2 + \frac{\rho_2}{2}\norm{\bar{\bfx}-\bfz^{(i+1)}+\vec{\mu}^{(i)}}_2^2+\frac{\beta}{2}\| \bar{\bfx} - \bfx^{(l)} \|_{2}^{2}.
\end{equation*}
The optimality condition is a linear system like,
\begin{equation*}
(\rho_1\bfA_{\sfS^{(l)}}^T\bfA_{\sfS^{(l)}}+(\rho_2+\beta)\bfI)\bar{\bfx}=\rho_1\bfA_{\sfS^{(l)}}^T(\bfy+\bfs^{(i+1)}-\vec{\lambda}^{(i)})+\rho_2(\bfz^{(i+1)}-\vec{\mu}^{(i)})+\beta \bfx^{(l)}.
\end{equation*}
We can solve it by the inverse of a symmetric positive-definite matrix.
\end{enumerate}

\begin{remark}
In fact, when $r=2$, it is unnecessary to introduce the variable $\bfs$. The scaled ADMM can be simplified in this case.
\end{remark}

\subsection{The analytical solution for the $\bfs$-problem with infinity norm}

Now we consider the equivalent $\bfs$-minimization problem for $r=\infty$ in~\eqref{s-problem:infinity}. It is strongly convex, so it has a unique solution.

\begin{theorem}\label{thm:infty}
Suppose $\widetilde{\bfs},\widetilde{\bfv}\in \mathbb{R}^n$, and the elements of $\widetilde{\bfv}$ is in ascending order by $|\widetilde{\bfv}_1|\leq |\widetilde{\bfv}_2|\cdots\leq|\widetilde{\bfv}_n|$, then the minimization problem
\begin{equation}\label{s-problem:infinity-theorem}
\min_{\widetilde{\bfs}} \| \widetilde{\bfs} \|_{\infty} + \beta\norm{\widetilde{\bfs}-\widetilde{\bfv}}_2^2,
\end{equation}
has the explicit optimal solution,
\begin{equation}\label{eq:potimal-solution-infty}
\widetilde{\bfs}^{\ast}=\begin{cases}
\widetilde{\bfv}_i  ,&i<i^{\ast}\\
\sgn(\widetilde{\bfv}_i)t_{i^{\ast}},&i\geq i^{\ast}.
\end{cases}
\end{equation}
where $i^{\ast}$ is a specific element of $\{0,1,\cdots, n-1\}$ such that
\begin{equation}\label{eq-ti-star}
	t_{i^{\ast}}=\frac{1}{n-{i^{\ast}}}\left(\sum_{j={i^{\ast}}+1}^n |\widetilde{\bfv}_j|-\frac{1}{2\beta}\right)~\mbox{ and } ~t_{i^{\ast}}\in [|\widetilde{\bfv}_{i^{\ast}}|,|\widetilde{\bfv}_{{i^{\ast}}+1}|]
\end{equation}
holds simultaneously.

\end{theorem}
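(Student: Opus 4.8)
The plan is to collapse the $n$-dimensional strongly convex problem into a one–dimensional search over the value $t=\norm{\widetilde{\bfs}}_{\infty}$. First I would record the identity
\[
\min_{\widetilde{\bfs}}\;\norm{\widetilde{\bfs}}_{\infty}+\beta\norm{\widetilde{\bfs}-\widetilde{\bfv}}_2^2
\;=\;\min_{t\geq 0}\left(t+\beta\min_{\norm{\widetilde{\bfs}}_{\infty}\leq t}\norm{\widetilde{\bfs}-\widetilde{\bfv}}_2^2\right),
\]
which holds because replacing $\norm{\widetilde{\bfs}}_{\infty}$ by any upper bound $t$ only increases the objective, while $t=\norm{\widetilde{\bfs}}_{\infty}$ is always feasible in the inner problem; strict convexity then forces the optimal $\widetilde{\bfs}$ of the outer problem to coincide with the inner minimizer evaluated at the optimal $t$. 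The inner minimization is the Euclidean projection of $\widetilde{\bfv}$ onto the $\ell_{\infty}$-ball of radius $t$, which decouples coordinatewise into the clipping $\widetilde{\bfs}_i=\sgn(\widetilde{\bfv}_i)\min\{\abs{\widetilde{\bfv}_i},t\}$, with optimal value $\beta\sum_{i:\,\abs{\widetilde{\bfv}_i}>t}(\abs{\widetilde{\bfv}_i}-t)^2$. Hence it remains to minimize over $t\geq 0$ the scalar function
\[
g(t):=t+\beta\sum_{j=1}^n\bigl(\max\{\abs{\widetilde{\bfv}_j}-t,\,0\}\bigr)^2 .
\]

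Next I would exploit the ascending ordering $\abs{\widetilde{\bfv}_1}\leq\cdots\leq\abs{\widetilde{\bfv}_n}$, with the conventions $\abs{\widetilde{\bfv}_0}=0$ and $\abs{\widetilde{\bfv}_{n+1}}=+\infty$. On each interval $t\in[\abs{\widetilde{\bfv}_k},\abs{\widetilde{\bfv}_{k+1}}]$ the active set $\{j:\abs{\widetilde{\bfv}_j}>t\}$ is exactly $\{k+1,\dots,n\}$, so there $g$ is the quadratic $t+\beta\sum_{j=k+1}^n(\abs{\widetilde{\bfv}_j}-t)^2$; a one–line computation shows the one–sided derivatives agree at each breakpoint, so $g\in C^1[0,\infty)$. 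Since each summand $\max\{\abs{\widetilde{\bfv}_j}-t,0\}^2$ is convex in $t$, $g$ is convex, and $g(t)\to\infty$ as $t\to\infty$, so $g$ has a unique minimizer $t^{\ast}\geq 0$. If $g'(0^{+})=1-2\beta\norm{\widetilde{\bfv}}_1\geq 0$, then $t^{\ast}=0$ and $\widetilde{\bfs}^{\ast}=\mathbf 0$. Otherwise $t^{\ast}$ is the unique zero of $g'$; it lies in a single interval $[\abs{\widetilde{\bfv}_{i^{\ast}}},\abs{\widetilde{\bfv}_{i^{\ast}+1}}]$ with $i^{\ast}\in\{0,\dots,n-1\}$, and on that interval the stationarity equation $1-2\beta\sum_{j=i^{\ast}+1}^n(\abs{\widetilde{\bfv}_j}-t)=0$ yields precisely $t^{\ast}=t_{i^{\ast}}=\frac{1}{n-i^{\ast}}\bigl(\sum_{j=i^{\ast}+1}^n\abs{\widetilde{\bfv}_j}-\frac{1}{2\beta}\bigr)$, while membership $t_{i^{\ast}}\in[\abs{\widetilde{\bfv}_{i^{\ast}}},\abs{\widetilde{\bfv}_{i^{\ast}+1}}]$ is exactly condition~\eqref{eq-ti-star}. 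Substituting $t=t^{\ast}$ into the coordinatewise clipping formula then gives $\widetilde{\bfs}^{\ast}$ in~\eqref{eq:potimal-solution-infty}.

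I expect the main difficulty to be bookkeeping rather than conceptual: making the piecewise–quadratic description of $g$ precise when several $\abs{\widetilde{\bfv}_j}$ coincide (degenerate intervals shrinking to points), confirming $g\in C^1$ so that convexity plus the intermediate value theorem pin down a unique stationary $t^{\ast}$ in a well-defined interval, and handling the threshold index $i^{\ast}$ in the two-line form of $\widetilde{\bfs}^{\ast}$ at the borderline coordinate $i=i^{\ast}$ (where $\abs{\widetilde{\bfv}_{i^{\ast}}}\leq t_{i^{\ast}}$ so the clipped and unclipped values agree up to the $\ell_\infty$ cap). As a cross-check I would also verify the conclusion directly from first-order optimality $\mathbf 0\in\partial\norm{\widetilde{\bfs}}_{\infty}+2\beta(\widetilde{\bfs}-\widetilde{\bfv})$ via the characterization $\partial\norm{\widetilde{\bfs}}_{\infty}=\{\bfu:\norm{\bfu}_1\leq 1,\ \widetilde{\bfs}^{T}\bfu=\norm{\widetilde{\bfs}}_{\infty}\}$ from~\eqref{eq:infty-p2}: this forces $\widetilde{\bfs}_i=\widetilde{\bfv}_i$ off the maximizing set, matching signs on it, and the balance relation $2\beta\sum_{i\in A}(\abs{\widetilde{\bfv}_i}-t)=1$, which after sorting reduces again to~\eqref{eq-ti-star}.
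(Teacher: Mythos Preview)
Your proposal is correct and follows essentially the same route as the paper: both reduce the $n$-dimensional problem to the one-dimensional function $g(t)=t+\beta\sum_{j}(\max\{\abs{\widetilde{\bfv}_j}-t,0\})^2$, verify it is $C^1$ and convex by checking the one-sided derivatives at the breakpoints $\abs{\widetilde{\bfv}_i}$, and then read off $t_{i^\ast}$ from the stationarity equation on the appropriate piece. Your presentation is slightly more explicit in identifying the inner step as projection onto the $\ell_\infty$-ball and in flagging the boundary case $t^\ast=0$ (which the theorem statement tacitly excludes), and your subdifferential cross-check is a nice addition, but there is no substantive methodological difference.
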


\begin{proof}

	Suppose $s_{\infty}=\|\widetilde{\bfs}\|_{\infty}$. The minimization problem \eqref{s-problem:infinity-theorem} can be rewritten to be more simple,
\begin{equation}\label{vv}
  \min_{s_{\infty}}  f(s_{\infty}) =s_{\infty} + \beta \sum_{|\widetilde{\bfv}_{i} | > s_{\infty}}( |\widetilde{\bfv}_{i}|-s_{\infty}  )^2.
\end{equation}
	We remark here if $s_{\infty}>|\widetilde{\bfv}_n|$, the minimizer is $s_{\infty}=|\widetilde{\bfv}_n|$ when $\widetilde{\bfs}=\widetilde{\bfv}$. This is a contradiction. Hence we can replace $s_{\infty}$ by $0 \leq t \leq |\widetilde{\bfv}_n|$, and the minimization problem (\ref{vv}) can be modified to be
 \[
 \min_{0 \leq t \leq |\widetilde{\bfv}_n|}  f(t) =t + \beta \sum_{|\widetilde{\bfv}_i|  > t} ( |\widetilde{\bfv}_i|-t  )^2.
 \]
 In fact, the objective functional $f(t)$ is a piecewise continuous function. Letting $\widetilde{\bfv}_0=0$, we have
 \[
 f(t) = t + \beta \sum_{j=i+1}^n ( |\widetilde{\bfv}_j|-t  )^2, \quad t \in [|\widetilde{\bfv}_i|,|\widetilde{\bfv}_{i+1}|], \quad i=0,\cdots,n-1.
 \]
 and
\begin{equation}\label{eq:derivative-of-f}
 f'(t) = 1 + 2\beta \sum_{j=i+1}^n ( t-|\widetilde{\bfv}_j| ), \quad t \in (|\widetilde{\bfv}_i|,|\widetilde{\bfv}_{i+1}|),\quad i=0,\cdots,n-1.
 \end{equation}
For $i=1,2,\cdots, n-1$, the right limit of the derivative of $f(t)$ at $t=|\widetilde{\bfv}_i|$ is,
\[
 f'(|\widetilde{\bfv}_{i}|+0)=1 +2\beta(n-i)|\widetilde{\bfv}_{i}|  - 2 \beta \sum_{j=i+1}^n |\widetilde{\bfv}_j|;
 \]
similarly, the left limit of the derivative of $ f(t)$ at $t=|\widetilde{\bfv}_{i}|$ is
 \[
 f'(|\widetilde{\bfv}_{i}|-0)=1 +2\beta(n-i+1) |\widetilde{\bfv}_{i}|  - 2 \beta \sum_{j=i}^n  |\widetilde{\bfv}_j|.
 \]
Since $f(t)$ is continuous at $|\widetilde{\bfv}_{i}|$ and $f'(|\widetilde{\bfv}_{i}|+0)=f'(|\widetilde{\bfv}_{i}|-0)$, $f(t)$ is continuously differentiable.

Furthermore, from~\eqref{eq:derivative-of-f}, we know that the derivative of $f(t)$ is monotonically increasing. Hence $f(t)$ is convex. Thus $f'(t)=0$ can give us the optimal solution of the simplified problem~\eqref{vv}. Let
\[
 t_i= \frac{1}{n-i}(\sum_{j=i+1}^N |\widetilde{\bfv}_j |    -\frac{1}{2 \beta}), ~i=0,1,\cdots,n-1.
\]
If there exists $i^{\ast}$ such that $t_{i^{\ast}} \in [|\widetilde{\bfv}_{i^{\ast}}|,|\widetilde{\bfv}_{i^{\ast}+1}|]$, then $t_{i^{\ast}}$ is the minimizer. Evidently, the optimal solution of minimization~\eqref{s-problem:infinity-theorem} can be given by~\eqref{eq:potimal-solution-infty}.

\end{proof}

\section{Numerical Experiments}\label{sec:experiments}

Numerical experiments are reported in this section to show the efficiency of the InISSAPL algorithm. All of them are implemented on a Laptop (Intel(R) Core(TM) Duo i5-7200u @2.50GHz 2.70GHz, 4.00GB RAM) using Matlab(License ID:1108635).

We consider the numerical tests of application in group sparse signal recovery. Let $\bfx_{or}$ denote the group sparse original signal, which is generated by randomly splitting its components into $\sfg$ groups. For each nonzero group member, its entries are randomly generated as i.i.d. Gaussian. Suppose that $\bfB\in \mathbb{R}^{M\times N}$ is randomly generated by an i.i.d. Gaussian ensemble. We let $\bfA$ be the row orthogonalized matrix of $\bfB$ by $\bfA=(orth(\bfB'))'$ in Matlab code. Then the measurement $\bfy$ is get by
\begin{equation*}
\bfy=\bfA *\bfx_{or}+ \sigma*noise,
\end{equation*}
where $\sigma$ is the noise level and $noise$ represents the three popular ones, Laplace noise, Gaussian noise and uniform noise.

We denote by $s$ the number of nonzero groups of the original signal $\bfx_{or}$. Then the sparsity level $k_s$ is defined by $k_s=s/\sfg$. For simplicity, we consider the uniform group partitions that we have the same group size, denoted by $n$. Define the relative recovery error $\epsilon$ by
\begin{equation*}
\epsilon=\frac{\|\bfx-\bfx_{or}\|_2}{\|\bfx_{or}\|_2}.
\end{equation*}

In our numerical experiments, we set $M=256,N=1024$ for the size of problem, $\sigma=0.001$ for the noise level and $n=8$ for the uniform group size, unless otherwise mentioned. The recovery is recognized as success when the relative error $\epsilon$ is less than $1\%$. For the iteration stopping criteria in the InISSAPL algorithm, we use the same criterion as in~\cite{Boyd2011Distributed} by setting $\epsilon^{\mbox{abs}}=\epsilon^{\mbox{rel}}=10^{-3}$ in the inner scaled ADMM loop, where
\begin{eqnarray*}
&&\|\widehat{\vec{r}}^{(i+1)}\|_2\leq \sqrt{M}\epsilon^{\mbox{abs}}+\epsilon^{\mbox{rel}}\max\left\{ \|\widehat{\bfA}\widehat{\bfx}^{(i+1)}\|_2,\|\widehat{\bfy}\|_2,\|\widehat{\vec{s}}^{(i+1)}\|_2  \right\},\\
&&\|\widehat{\vec{\rho}}\widehat{\bfA}(\widehat{\bfx}^{(i+1)}-\widehat{\bfx}^{(i)})\|_2\leq\sqrt{N}\epsilon^{\mbox{abs}}+\epsilon^{\mbox{rel}}\|\widehat{\vec{\rho}}\widehat{\vec{\lambda}}^{(i+1)}\|_2,
\end{eqnarray*}
with
\[
\widehat{\bfA}=\left[\begin{array}{cc}
\bfA&\\
&\bfI
\end{array}\right],\widehat{\vec{\rho}}=\left[\begin{array}{cc}
\rho_1&\\
&\rho_2
\end{array}\right],
\]

\[
\widehat{\bfy}=\left[\begin{array}{c}
\bfy\\
\vec{0}\end{array}\right],
\widehat{\vec{s}}=\left[\begin{array}{c}
\vec{s}\\
\vec{z}\end{array}\right],
\widehat{\bfx}=\left[\begin{array}{c}
\bfx\\
\bfx\end{array}\right],
\widehat{\vec{\lambda}}=\left[\begin{array}{c}
\vec{\lambda}\\
\vec{\mu}\end{array}\right],
\widehat{\vec{r}}^{(i+1)}=\widehat{\bfA}\widehat{\bfx}^{(i+1)}-\widehat{\bfy}-\widehat{\vec{s}}^{(i+1)}.
\]
We adopt the stopping criterion $\|\bfx^{(l+1)}-\bfx^{(l)}\|_2 / \|\bfx^{(l)}\|_2\leq 10^{-3}$ for the outer iteration. The maximal iteration numbers are set to  MAXit=1000 in the ADMM and MAX=100 in the outer iteration.

\subsection{Experiments on the initialization of the InISSAPL}\label{discuss-init}
We report the results of experiments when the different starting points are chosen in InISSAPL algorithm. The first kind of starting points are $c\mathbb{1}$ with $c\neq 0$. We choose $c=1$ in the test. By setting $p=2, q=0.5, r=2$ for Gaussian noise, we compute the relative errors $\epsilon$. The second kind of starting points are randomly generated as i.i.d. Gaussian. We compute the average relative error $\bar{\epsilon}$ of 1000 different starting points for the same problem setting as in the first kind.

The experiments are performed for different signal recovery problems with three sensing matrices $\bfA_1,\bfA_2,\bfA_3$ and three sparsity cases $s=8,s=16,s=24$. The comparisons are displayed in Table \ref{tab:re-s}.
\begin{table}[tbp]
\centering
\caption{Relative Errors of the reconstruction by InISSAPL with two kinds of starting points.}
	\begin{tabular}{ccccc}
		\toprule
      &   &$\bfA_1$  & $\bfA_2$& $\bfA_3$  \\
      \midrule
$s=8$	&$\epsilon$  & \num{0.0042} &\num{0.0036}  & \num{0.0041} \\
		&$\bar{\epsilon}$ & \num{0.0042} &\num{0.0036}  & \num{0.0041} \\
		\midrule
$s=16	$  &$\epsilon$ & \num{0.0059} &\num{0.0063}  & \num{0.0058}    \\
		 &	$\bar{\epsilon}$ & \num{0.0059} &\num{0.0063}  & \num{0.0058}    \\
		\midrule
$s=24$ &$\epsilon$ & \num{0.4107} &\num{0.0093}  & \num{0.0084}\\
  &	$\bar{\epsilon}$ & \num{0.4013} &\num{0.1016}  & \num{0.0095}    \\
		
 \bottomrule
	\end{tabular}%
\label{tab:re-s}%
\end{table}
It shows that the InISSAPL algorithm is effective and not sensitive to the choice of suggested starting points, even for the less sparsity case $s=24$. Based on this fact, we will choose vector with ones in all elements as starting point in the following experiments.

\vspace{2mm}
The InISSAPL algorithm covers many cases for different choices of $p,q,r$. We discuss them separately in the following subsections.

\subsection{Accessible to diversity of noise}
Our algorithm is applicable to different types of noise. Here we fix $q=1/2,p=2$ and noise level $\sigma=0.01$ to show the performance for three kinds of noise, Laplace noise, Gaussian noise, and uniform distribution noise.

For a specific case of noise, we compare the relative error in Table~\ref{tab:re-r} when the fidelity term uses different $\ell_r~(r=1,2,\infty)$ norms. It is clearly illustrated that $r=1$ is best for Laplace noise, $r=2$ is best for Gaussian noise and $r=\infty$ is best for uniform noise.

\begin{table}[tbp]
\centering
\caption{Relative Error $\epsilon$ over $r$ for the Laplace noise (top), Gaussian noise (middle), uniform noise (bottom) with $p=2,q=0.5,\sigma=0.01$.}
	\begin{tabular}{cccccccc}
		\toprule
 Laplace noise    &  & $\epsilon(r=1)$ &  & $\epsilon(r=2)$ &  & $\epsilon(r=\infty)$\\ \midrule
		$s=4$ &  &  \textbf{0.0370} &  & \num{0.0854} &  &\num{0.0858}     \\
		$s=8$ &  &  \textbf{0.0270} &  & \num{0.0564} &  &\num{0.0588}     \\
		$s=12$ &  & \textbf{0.0362} &  & \num{0.0569} &  &\num{0.0586}     \\
		$s=16$ &  & \textbf{0.0491} &  & \num{0.0635} &  &\num{0.0654}     \\  \midrule
Gaussian noise   &  & $\epsilon(r=1)$ &  & $\epsilon(r=2)$ &  & $\epsilon(r=\infty)$\\ \midrule
		$s=4$ &  & \num{0.0507} &  & \textbf{0.0186} &  &\num{0.0504}     \\
		$s=8$ &  &  \num{0.0440} &  & \textbf{0.0203} &  &\num{0.0405}     \\
		$s=12$ &  & \num{0.0420} &  & \textbf{0.0247} &  &\num{0.0408}     \\
		$s=16$ &  & \num{0.0604} &  & \textbf{0.0297} &  &\num{0.0638}  \\ \midrule
uniform noise   &  & $\epsilon(r=1)$ &  & $\epsilon(r=2)$ &  & $\epsilon(r=\infty)$\\ \midrule
		$s=4$ &  & \num{0.0265} &  & \num{0.0234} &  &\textbf{0.0110}     \\
		$s=8$ &  &  \num{0.0254} &  & \num{0.0216} &  &\textbf{0.0127}     \\
		$s=12$ &  & \num{0.0220} &  & \num{0.0192} &  &\textbf{0.0159}     \\
		$s=16$ &  & \num{0.0178} &  & \num{0.0170} &  &\textbf{0.0147}     \\
      \bottomrule
	\end{tabular}%
\label{tab:re-r}%
\end{table}

\subsection{Choice of $p$ and $q$}

We discuss numerically the InISSAPL algorithm on the parameters $p,q$ in the $\ell_{p,q}$ regularization term. Firstly, letting $p=r=2$, we test the  algorithm when $q$ varies among $\{0.1,0.3,0.5,0.7,0.9\}$. The rate of success on sparsity level is demonstrated in Figure \ref{fig:q-comparison}. It shows that the algorithm performs best when $q=1/2$. This fact is consistent with the numerical results in \cite{HuYaohuaetal2017GSO,Xu2012L12}.

Secondly, we examine the algorithm on commonly used $p=1$ and $p=2$ for the three kinds of noise with $q=1/2$. As suggested in the former Subsection, we use $r=1$ for Laplace noise, $r=2$ for Gaussian noise and $r=\infty$ for uniform noise, respectively. We compare the rate of success on sparsity level in Figure \ref{fig:rate-of-success-p}. It can be observed that the rate of success with $p=1$ is better than it with $p=2$ for Laplace noise and conversely for Gaussian noise. For uniform noise, it has no essential numerical difference between $p=1$ and $p=2$. These results show that different $p$ values may apply to a specific model.

\begin{figure}[htp]
	\centering
	\includegraphics[scale=0.40]{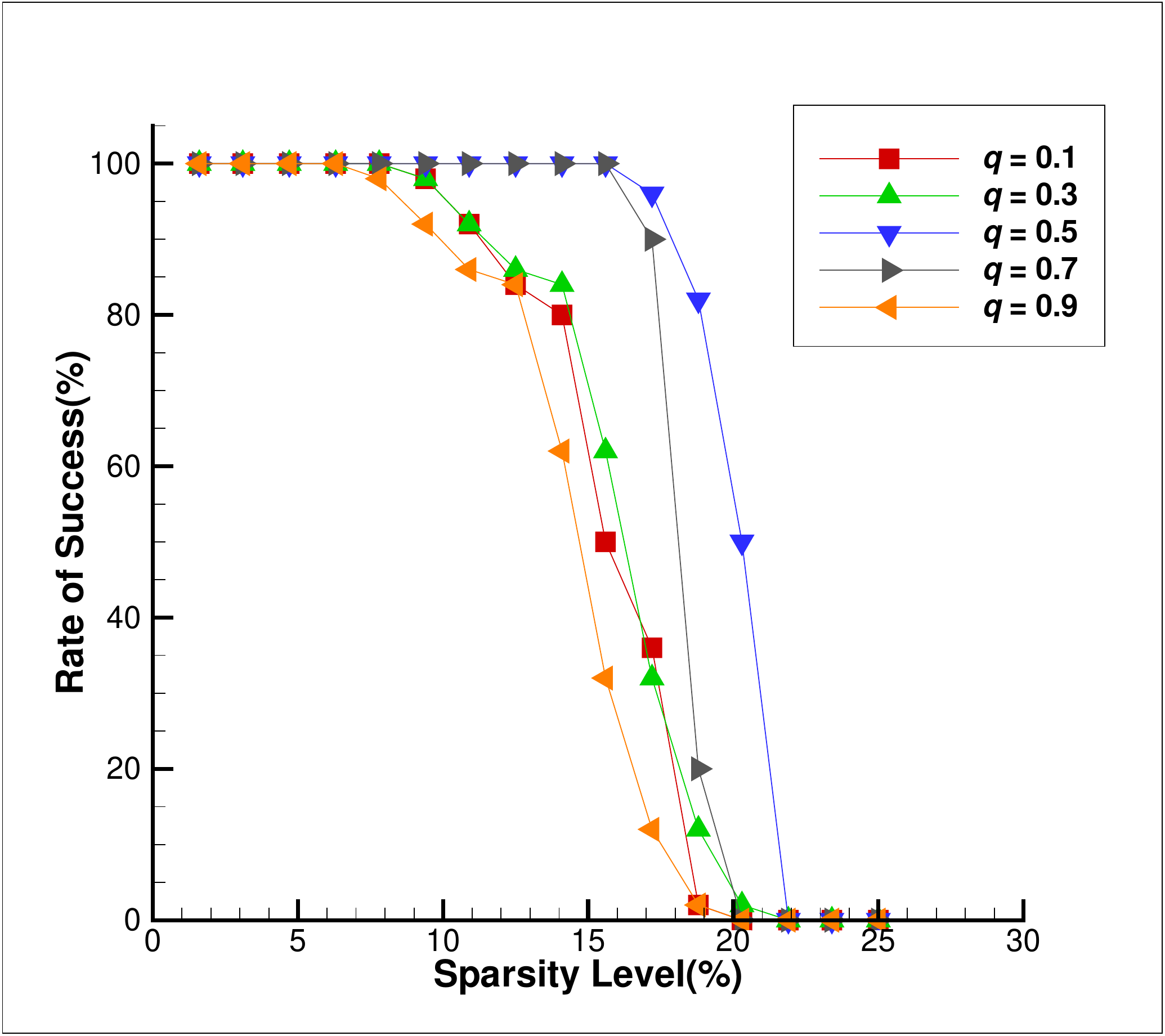}
	\caption{The comparisons on Rate of Success for different $q$ with $p=r=2$.}
	\label{fig:q-comparison}
\end{figure}

\begin{figure}[htp]
	\centering
	\subfloat[Laplace noise($r=1,q=0.5$)]{
   \includegraphics[scale=0.35]{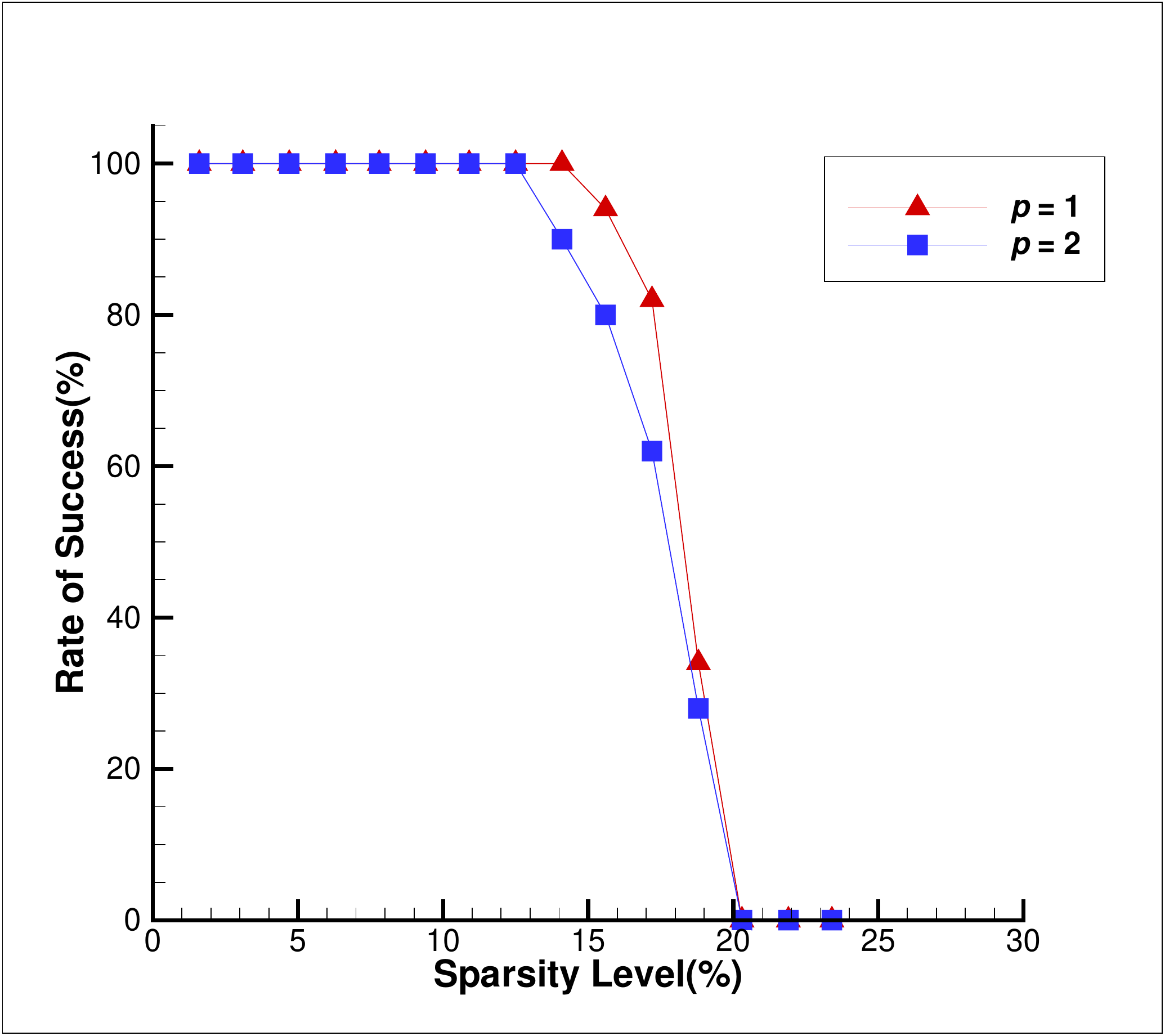}}
\quad
\subfloat[Gaussian noise($r=2,q=0.5$)]{
	\includegraphics[scale=0.35]{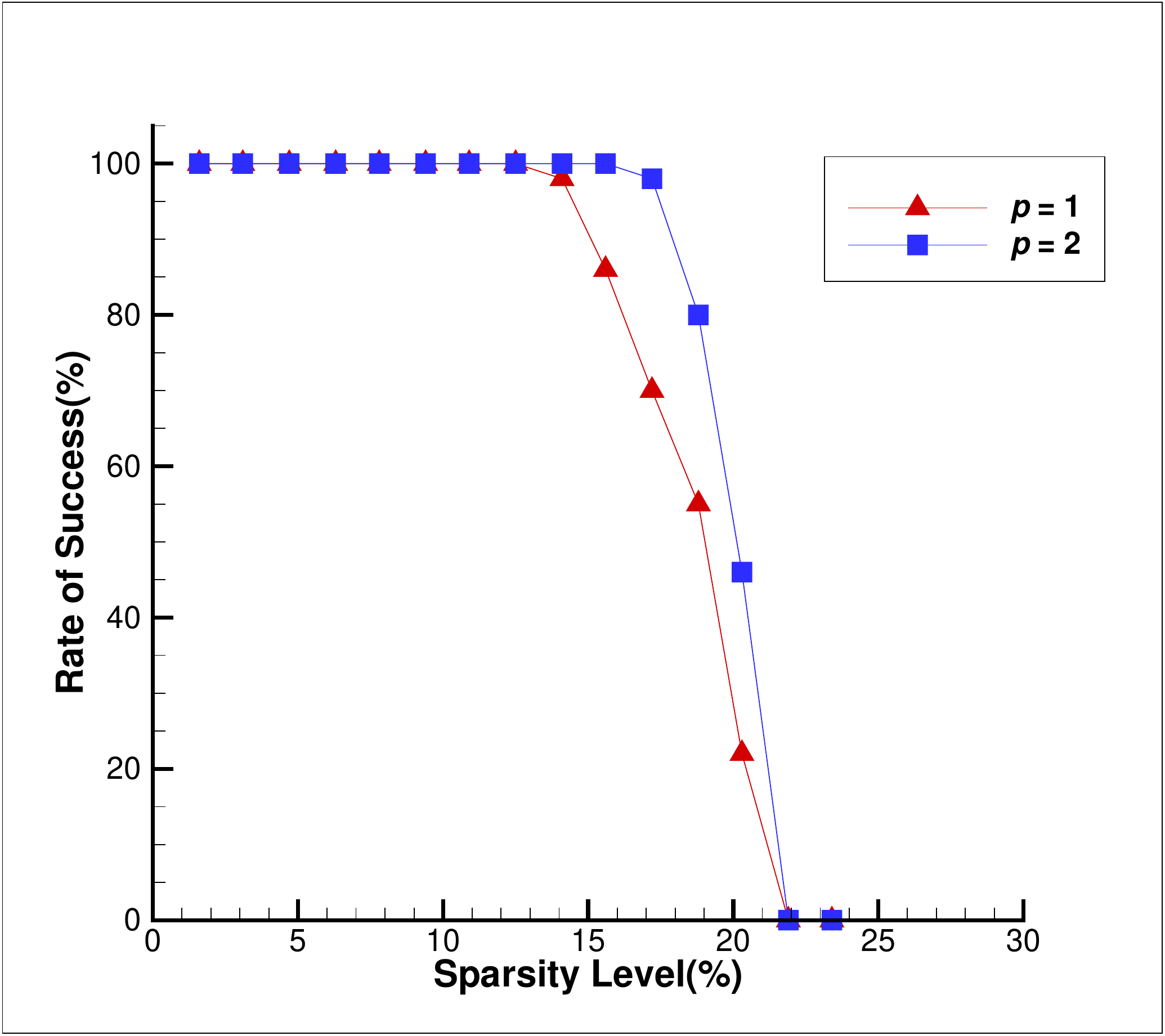}}
\quad
\subfloat[uniform noise($r=\infty,q=0.5$)]{
   \includegraphics[scale=0.35]{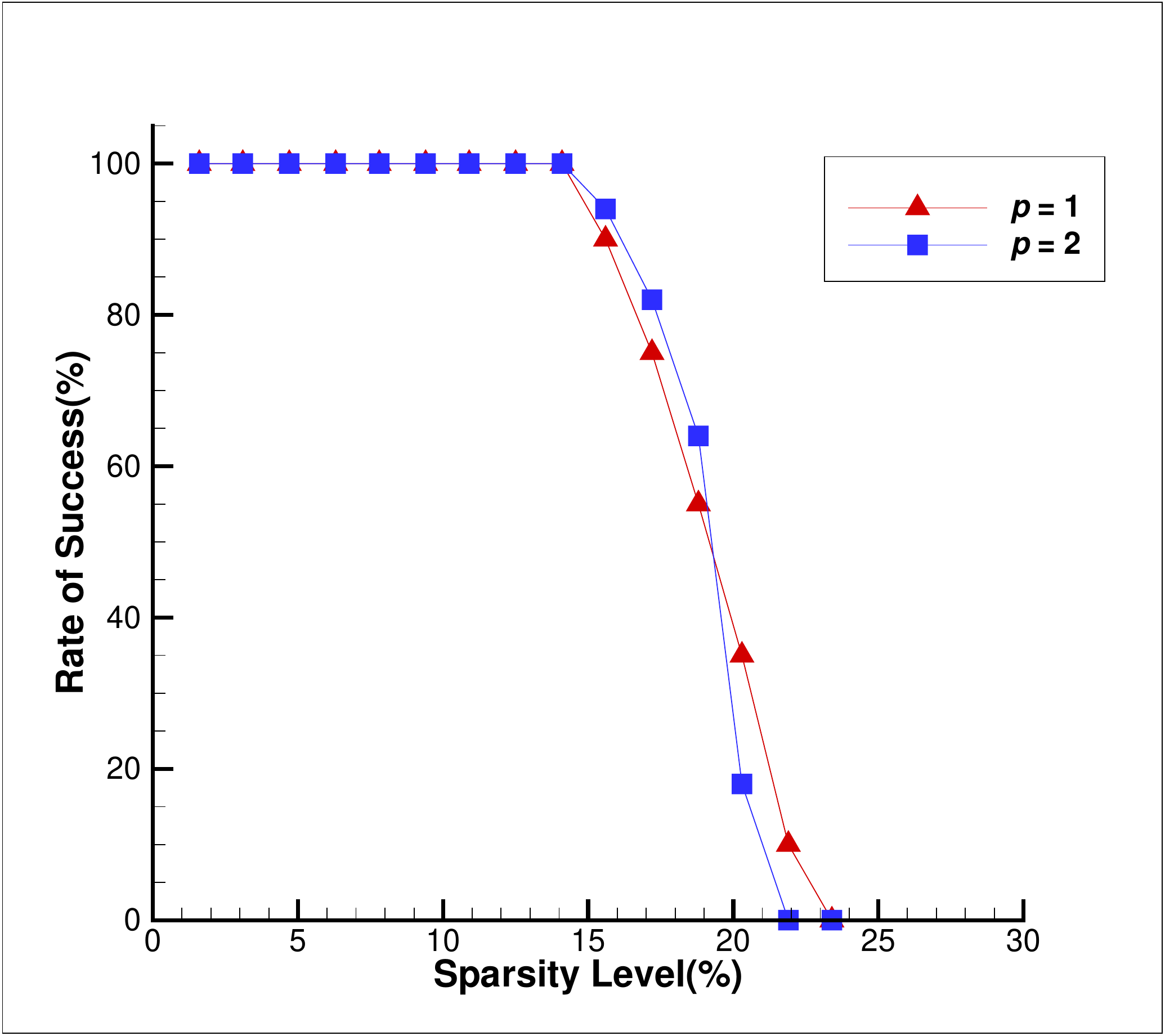}}
	\caption{Comparisons on Rate of Success for Laplace noise (a), Gaussian noise (b) and uniform noise (c) between $p=1$ and $p=2$.}
	\label{fig:rate-of-success-p}
\end{figure}

\subsection{Sensitivity analysis on group size}
In this subsection, we study the sensitivity of our algorithm on group size. We implement the experiments to show the rate of success over the different group sizes ($n=4,8,16,32$) for three types of noise. Similarly as before, we set $r=1,q=1/2$ for Laplace noise, $r=2,q=1/2$ for Gaussian noise and $r=\infty, q=1/2$ for uniform noise. The sensitivity results are given in Figure \ref{fig:sensitivity-analysis} with $p=1$ and $p=2$. It shows that the larger the group size, the higher the rate of success. This fact is true because more information is included for larger group size.

\begin{figure}[htp]
	\centering
	\subfloat[Laplace noise with $p=1$]{
\includegraphics[scale=0.38]{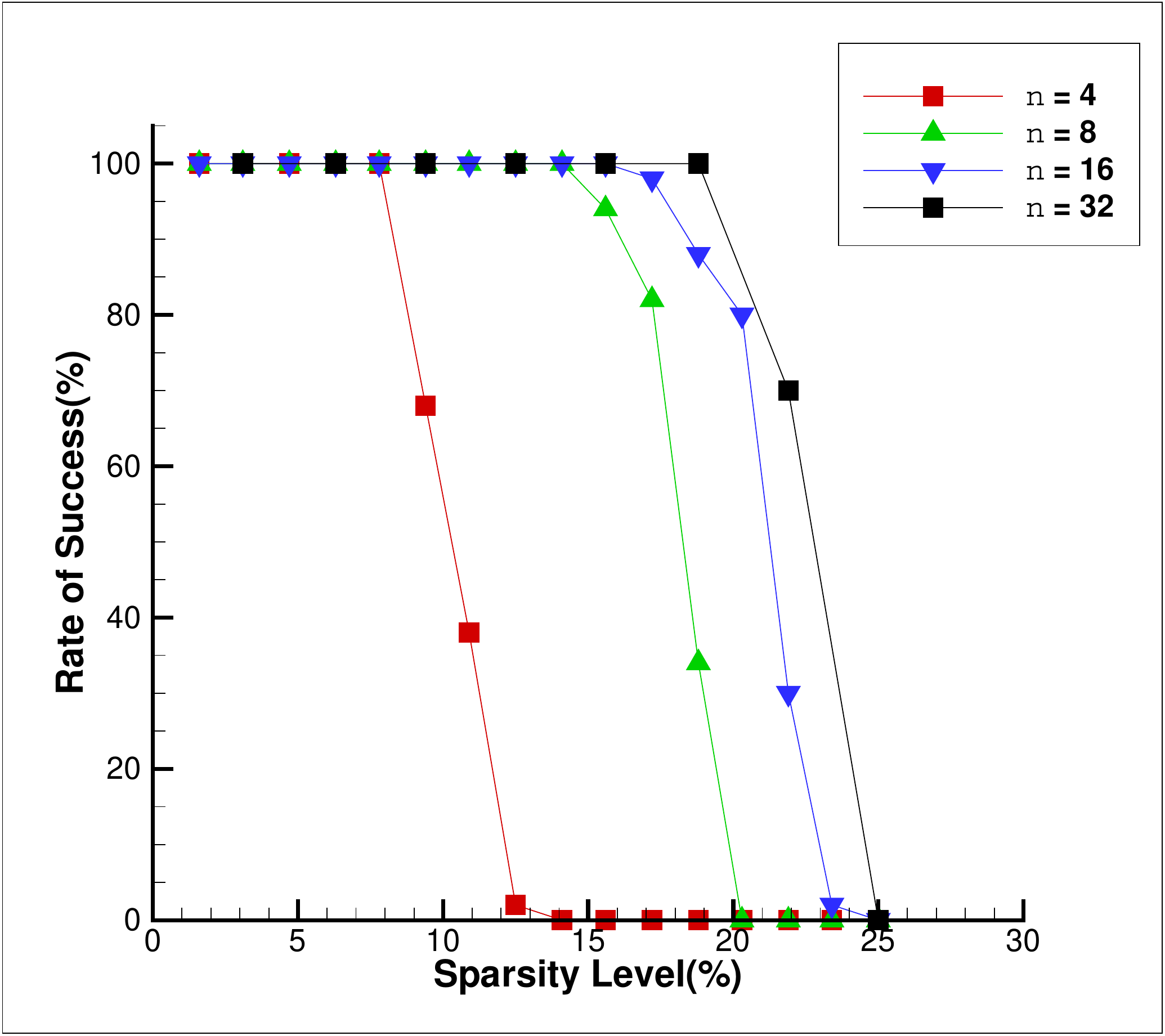}
\label{sens1}
}
\quad
\subfloat[Laplace noise with $p=2$]{
\includegraphics[scale=0.38]{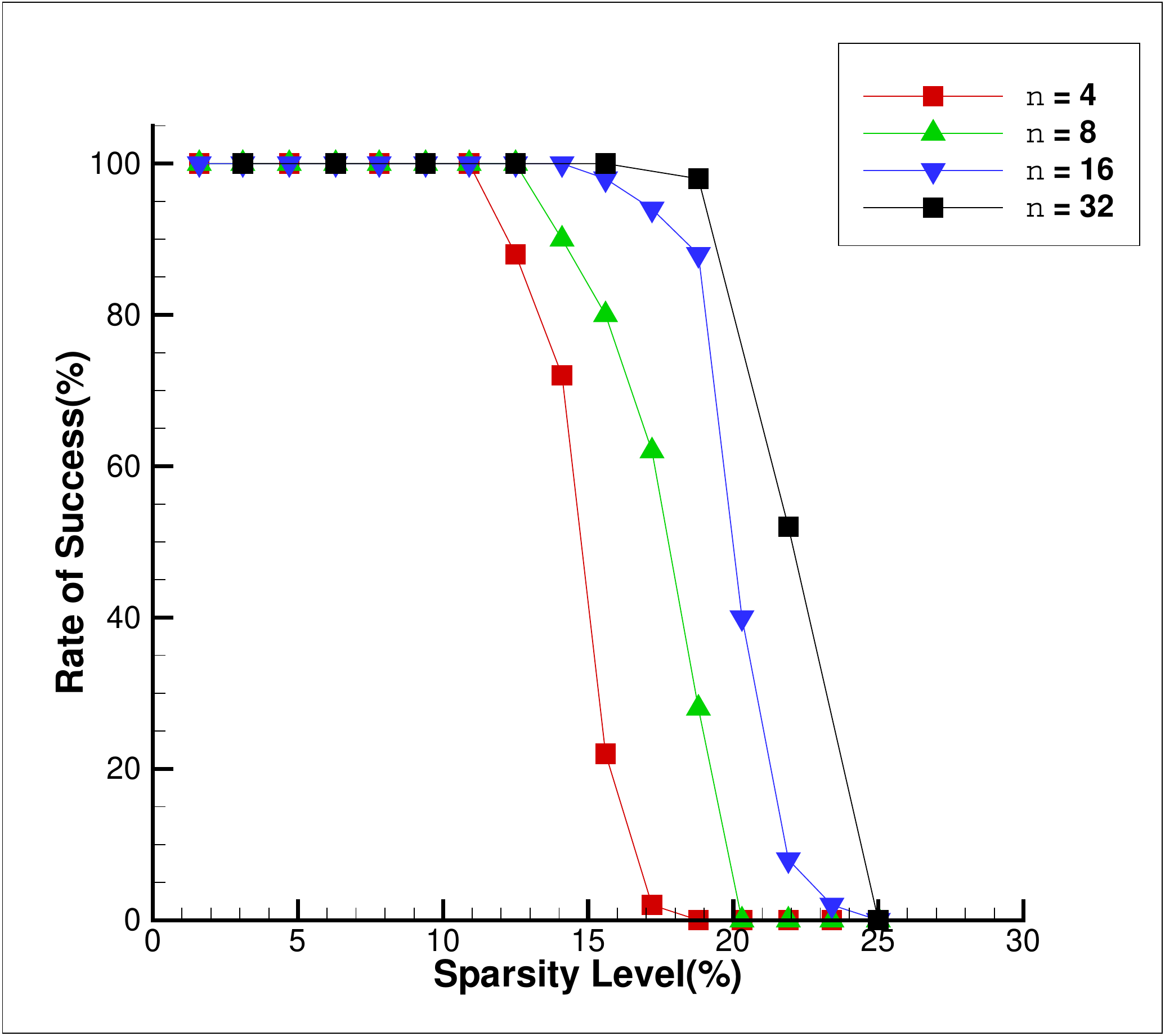}
\label{sens2}
}
\quad
\subfloat[Gaussian noise with $p=1$]{
\includegraphics[scale=0.38]{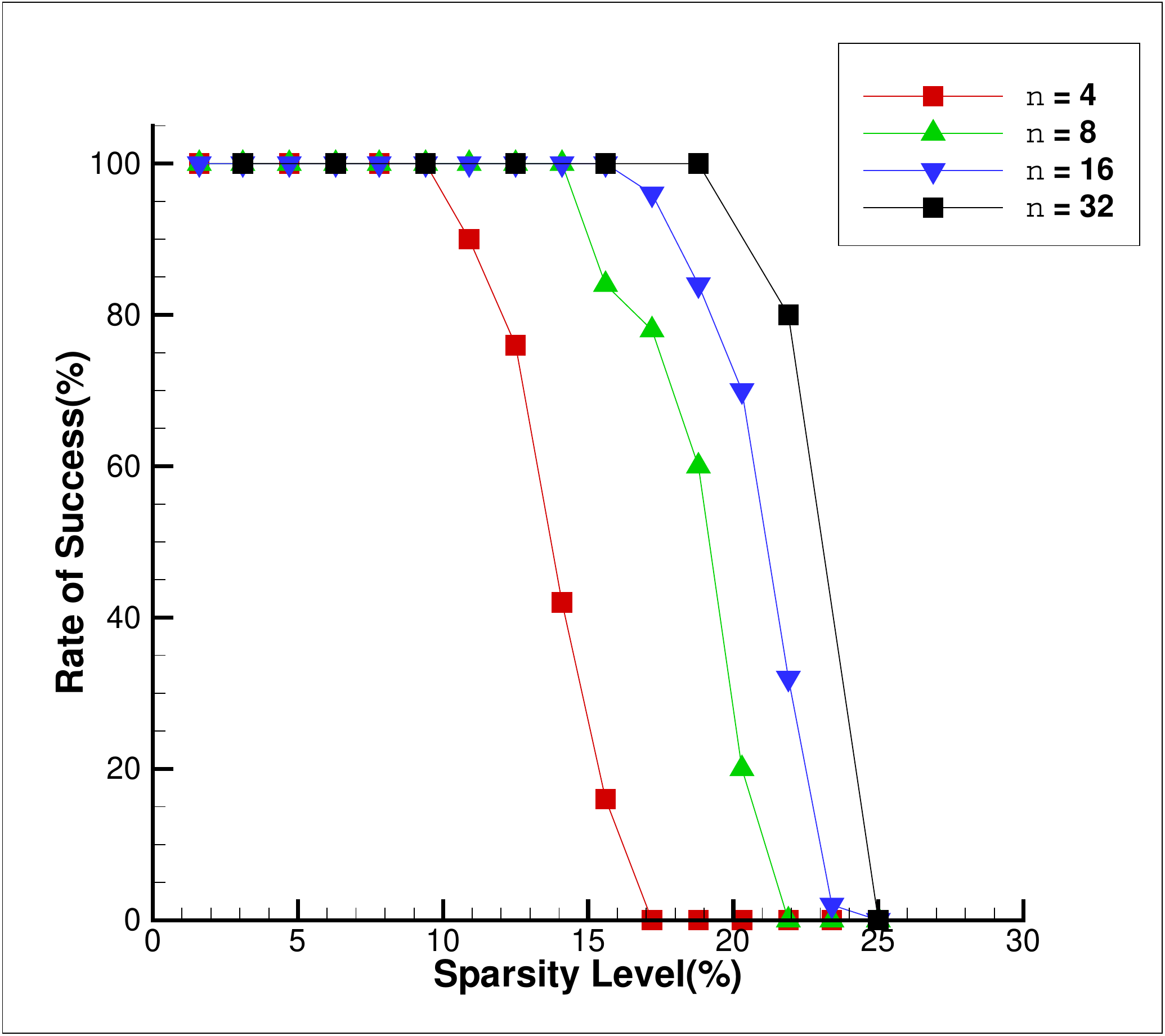}
}
\quad
\subfloat[Gaussian noise with $p=2$]{
\includegraphics[scale=0.38]{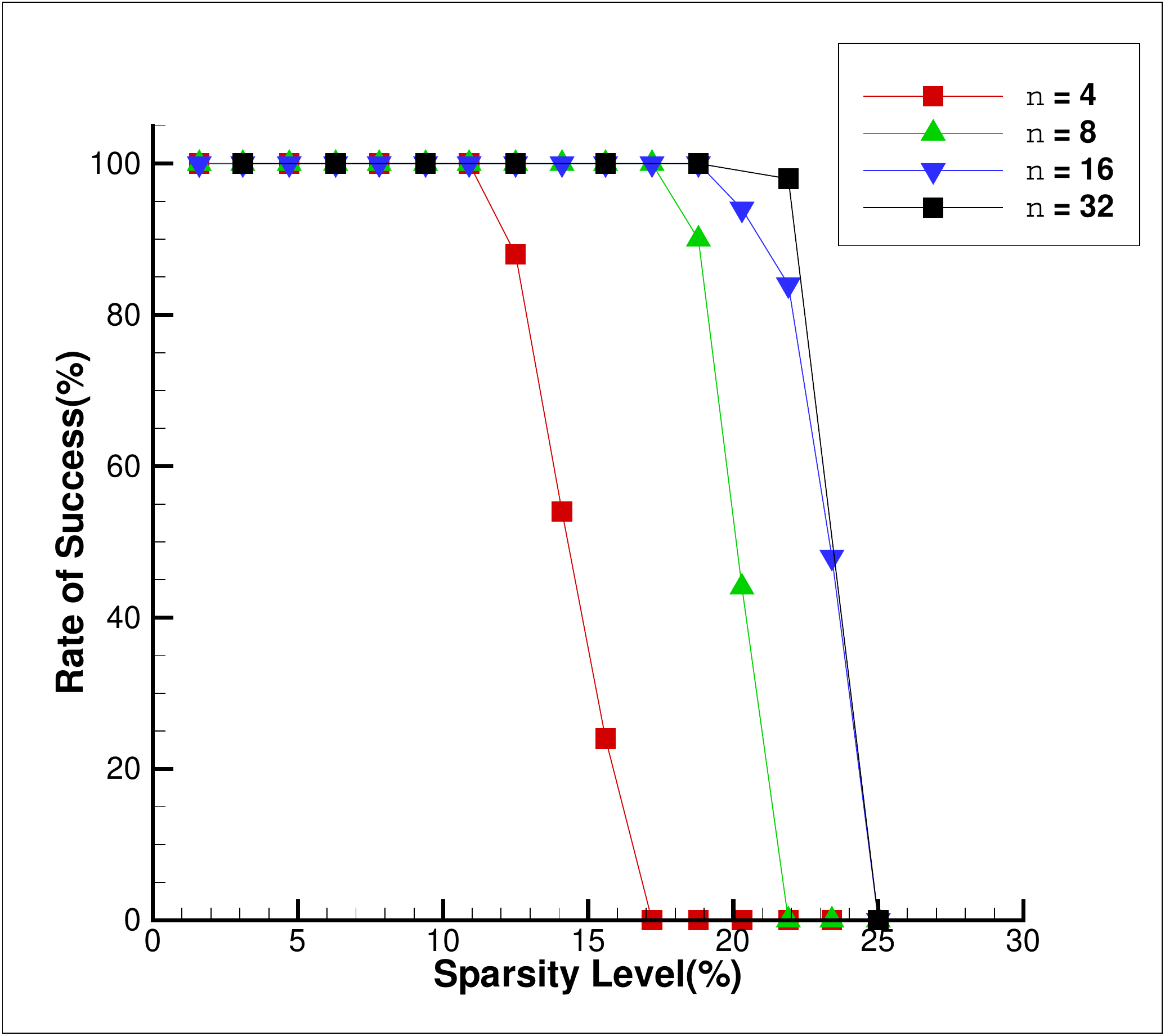}
}
\quad
\subfloat[uniform noise with $p=1$]{
\includegraphics[scale=0.38]{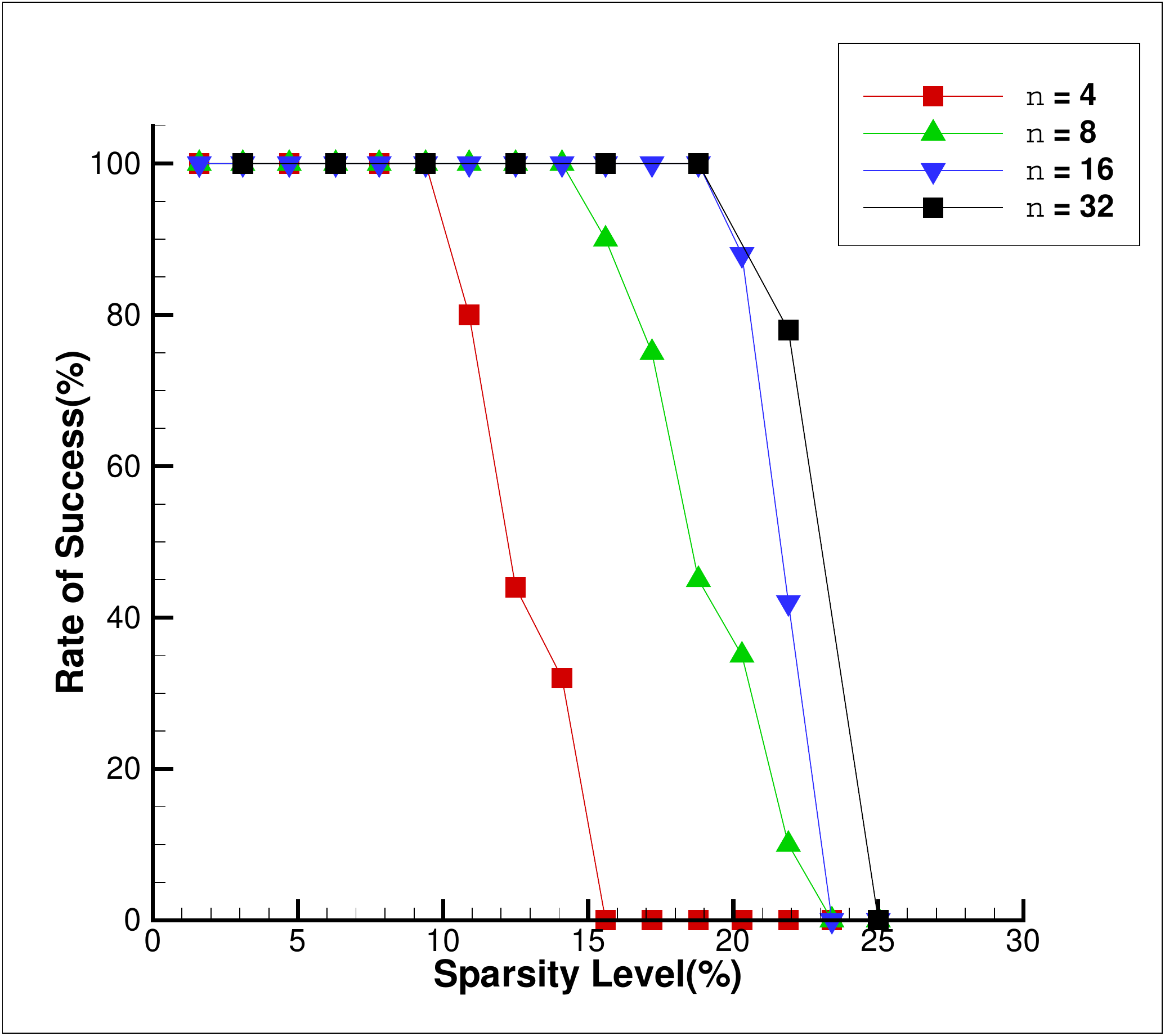}
}
\quad
\subfloat[uniform noise with $p=2$]{
\includegraphics[scale=0.38]{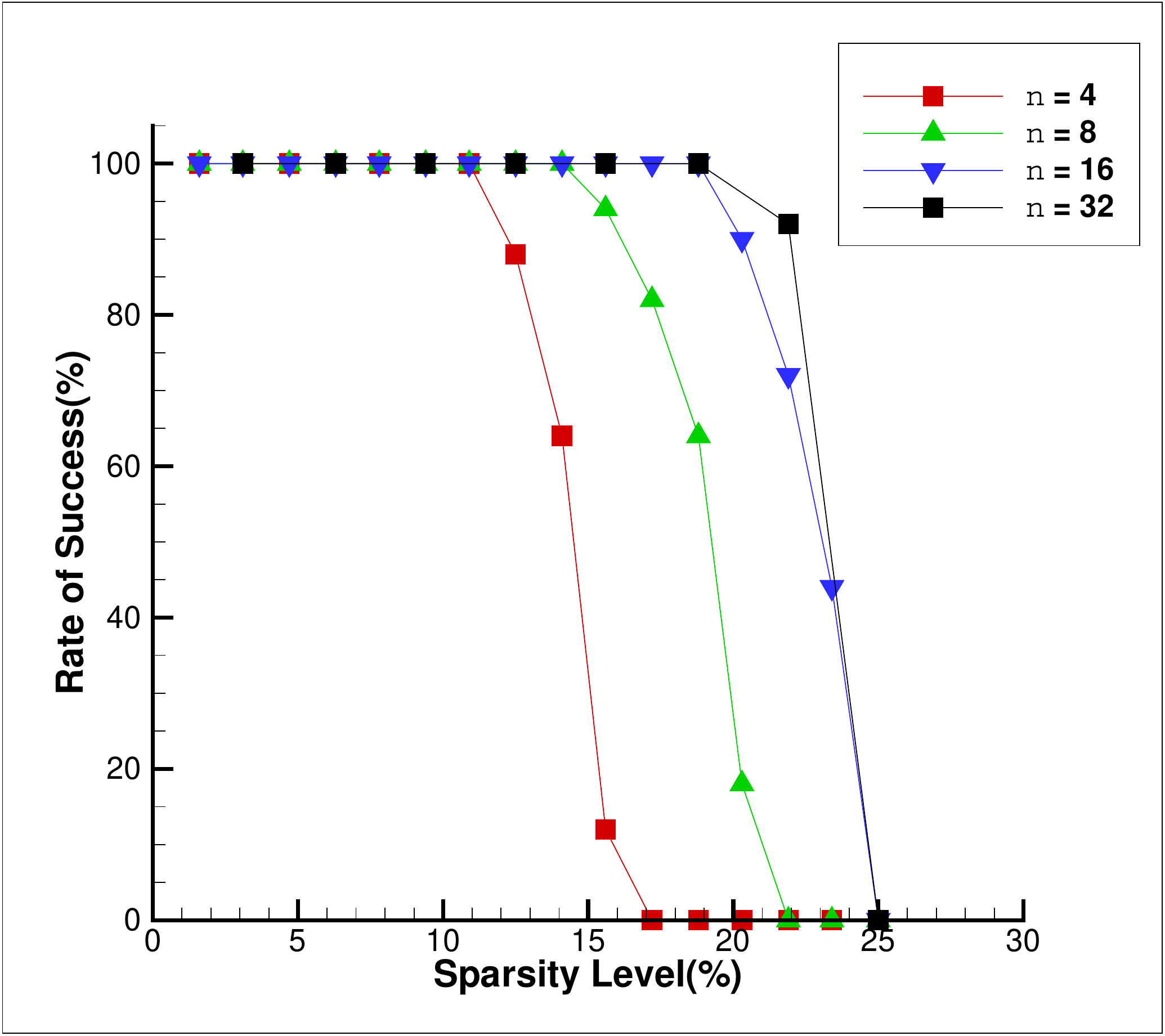}
}
\caption{Sensitivity analysis over group size $n=4,8,16,32$ for Laplace noise (a) and (b), Gaussian noise (c) and (d), uniform noise (e) and (f) with $p=1,p=2$.}
	\label{fig:sensitivity-analysis}
\end{figure}

\subsection{Comparison with some state-of-the-art algorithms }

We compare the InISSAPL algorithm with others in the existing works for the group sparse model. The algorithms are typically PGM-GSO \cite{HuYaohuaetal2017GSO} and the convex optimization Group Lasso \cite{Boyd2011Distributed}. In the code of PGM-GSO algorithm (available online https://CRAN.R-project.org/package=GSparO), there is an additional input: the number of nonzero groups $s$. In our experiments, PGM-GSO denotes their algorithm with EXACT $s$ of the ground truth. Since, in applications, it is hard to know $s$ of the ground truth exactly,  we also use an estimated value $s_e$ (close to the true value $s$) with $s_e=s+2$ in the experiments for more tests. The PGM-GSO with estimated $s_e$ is named e-PGM-GSO. The comparison on rate of success is demonstrated in Figure \ref{fig:comparison} by setting the parameters $p=2,q=1/2,r=2,n=8$ for Gaussian noise. We can see that the rates of success of PGM-GSO (with exact $s$ of the number of nonzero groups of the ground truth) and our InISSAPL are similar, which are considerably higher than e-PGM-GSO and Group Lasso. Note that our InISSAPL does NOT require to input the number of nonzero groups.

For the competitive algorithms, InISSAPL, PGM-GSO, and e-PGM-GSO, we compare the running time and relative error for different sized problems in Table \ref{tab:running-time}. It is illustrated that InISSAPL is more efficient than PGM-GSOers, especially for larger scale problems. The reason is that the computation  is implemented only on the shrinking group support set.

\begin{figure}[htp]
	\centering
	\includegraphics[scale=0.40]{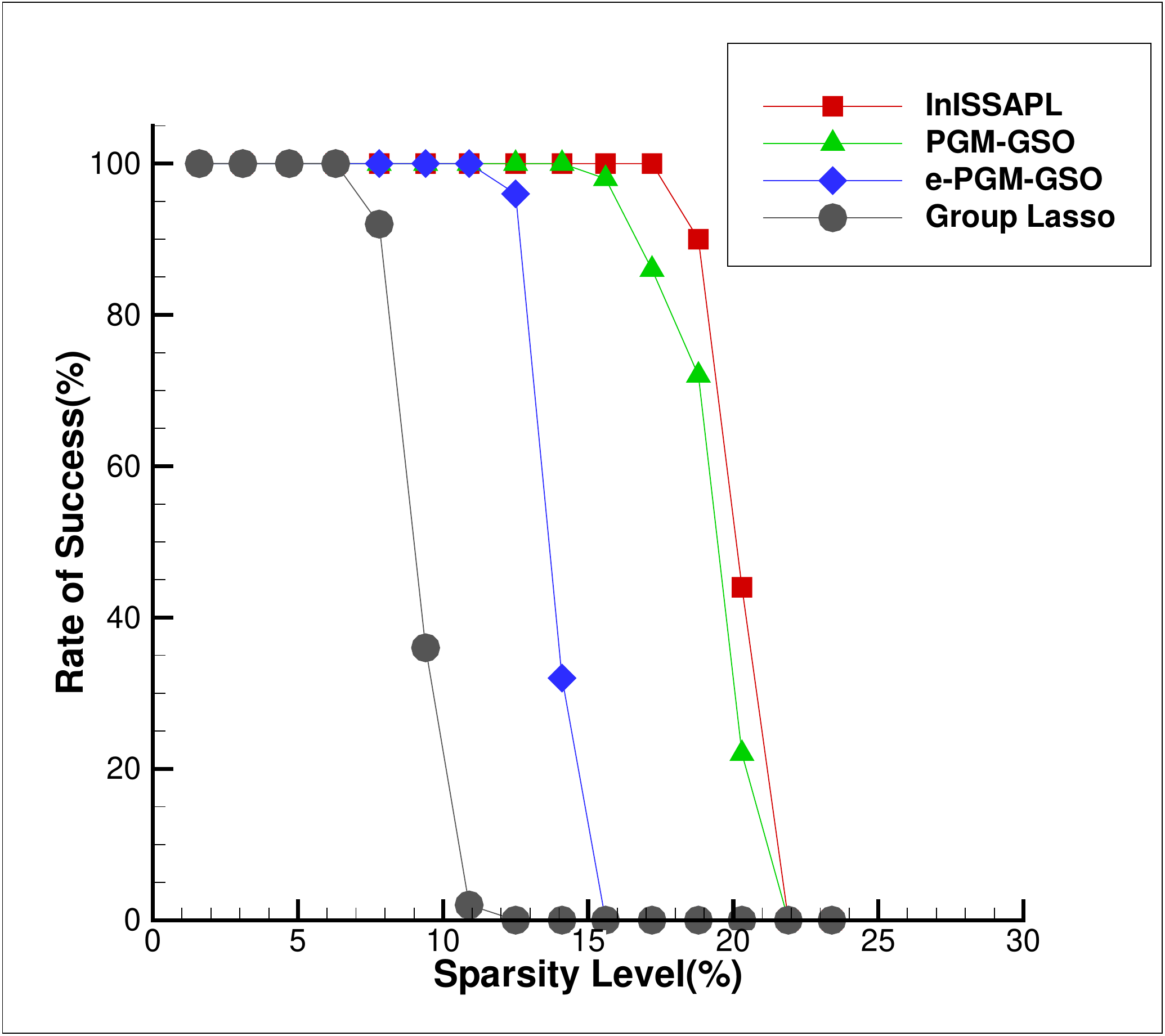}
	\caption{Comparisons on Rate of Success for InISSAPL, PGM-GSO (with true value of the number of nonzero groups $s$), e-PGM-GSO (with estimated value of the number of nonzero groups $s_e=s+2$) and Group Lasso algorithms.}
	\label{fig:comparison}
\end{figure}

\begin{table}[tbp]
	\centering
	\caption{Comparisons on Running time and Relative Error $\epsilon$ for PGM-GSO, e-PGM-GSO, InISSAPL algorithms in two problems with different size. It can be seen that the advantages of our algorithm become larger when the problem scale increases. }
	\begin{tabular}{clllllll}
		\toprule
$M=256$\\$N=1024$ &  & PGM-GSO&  & e-PGM-GSO &  &InISSAPL &   \\ \midrule
		 $s$  &  & Time(s)   &$\epsilon$& Time(s) & $\epsilon$  &Time(s) & $\epsilon$  \\ \midrule
		4 &  &  \num{0.56} &\num{0.0024}  & \num{0.59} &\num{0.0031}  & \textbf{0.46} & \textbf{0.0023}   \\
		8 &  &  \num{0.58} &\textbf{0.0025}  & \num{0.59} &\num{0.0033}  & \textbf{0.49} & \num{0.0027}   \\
		12 &  &  \num{0.58} &\textbf{0.0030}  & \num{0.60} &\num{0.0032} &\textbf{0.50} & \textbf{0.0030}    \\
		16 &  & \num{0.59} &\num{0.0033}  &  \num{0.81} &\num{0.0040}  &\textbf{0.52} & \textbf{0.0031}   \\ \midrule
$M=1024$\\$N=4096$&  & PGM-GSO&  & e-PGM-GSO &  &InISSAPL &   \\ \midrule
		 $s$  &  & Time(s)   &$\epsilon$& Time(s) & $\epsilon$  &Time(s) & $\epsilon$  \\ \midrule
		25 &  &  \num{18.04} &\num{0.0026}  & \num{18.99} &\num{0.0039}  & \textbf{3.98} & \textbf{0.0025}   \\
		50 &  &  \num{18.07} &\textbf{0.0027}  & \num{18.32} &\num{0.0037}  & \textbf{4.20} & \textbf{0.0027}   \\
		75 &  &  \num{18.18} &\num{0.0029}  & \num{18.21} &\num{0.0046}  & \textbf{6.58} & \textbf{0.0028}    \\
		100 &  & \num{18.25} &\num{0.6095}  &  \num{18.87} &\num{0.8928}  & \textbf{9.02} & \textbf{0.0866}   \\  \bottomrule
	\end{tabular}%
	\label{tab:running-time}%
\end{table}

\section{Conclusions}
\label{sec:conclusion}

The group sparse $\ell_{p,q}$-$\ell_r$ model is very useful in many applications. The InISSAPL algorithm provides a unified framework to deal with all the cases of parameters $p\geq 1,0<q<1, 1\leq r\leq \infty$. When proving the global convergence of algorithm with KL property, we develop a lower bound theory for the nonzero groups of the iterative sequence to avoid the non-Lipschitz feature and construct a sophisticated subdifferential formula. Along iterations, the unknowns become fewer and fewer and can be calculated by the scaled ADMM in the inner loop. Therefore it is specially efficient for large-scale problems. Numerical experiments and comparisons demonstrate the good performance of our algorithm.

In our future work, the model and algorithm can be extended to other applications with overlapping groups structure such as the gene expression data and the patch patterns in image processing.

\section{Acknowledgements}
We greatly appreciate helpful discussions with Xue Feng, and thank the authors of \cite{HuYaohuaetal2017GSO} for providing their code available online https://CRAN.R-project.org/package=GSparO. 

\section{Appendix}
\label{sec:appendix}

We firstly recall the basic definitions of subdifferential and horizon cone from the reference~\cite{Rockafellar2009Variational}.

\begin{definition}[Subdifferentials]\label{def-subdiff}
  Let $h: \mathbb{R}^{N} \to \mathbb{R}\cup\{+\infty\}$ be a proper, lower semicontinuous function.
  \begin{enumerate}
    \item The \emph{regular subdifferential} of $h$ at $\bar{\bfx} \in \dom h = \{\bfx \in \mathbb{R}^{N}: h(\bfx) < +\infty  \}$ is defined as
    \[
    \widehat{\partial}h(\bar{\bfx} ) := \set{\bfv \in\mathbb{R}^{N}:
    \liminf_{\substack{ \bfx \to \bar{\bfx} \\ \bfx \neq \bar{\bfx} }}\frac{h(\bfx)-h(\bar{\bfx})- \langle \bfv,\bfx -\bar{\bfx} \rangle}{\|\bfx-\bar{\bfx}\|}\geq 0
    };
    \]
    \item The (limiting) \emph{subdifferential} of $h$ at $\bar{\bfx}  \in \dom h $ is defined as
     \[
    \partial h(\bar{\bfx} ):=\set{\bfv \in\mathbb{R}^{N}: \exists \bfx^{(k)} \to \bar{\bfx} , h(\bfx^{(k)}) \to h(\bar{\bfx}), \bfv^{(k)}\in \widehat{\partial} h(\bfx^{(k)}), \bfv^{(k)} \to \bfv
};
    \]
    \item The \emph{horizon} \emph{subdifferential} of $h$ at $\bar{\bfx}  \in \dom h $ is defined as
     \[
    \partial^{\infty} h(\bar{\bfx} ):=\set{\bfv \in\mathbb{R}^{N}: \exists \bfx^{(k)} \to \bar{\bfx} , h(\bfx^{(k)}) \to h(\bar{\bfx}), \bfv^{(k)}\in \widehat{\partial} h(\bfx^{(k)}), \lambda^{(k)}\bfv^{(k)} \to \bfv \mbox{ for some sequence $\lambda^{(k)}\searrow 0$}
}.
    \]
  \end{enumerate}
\end{definition}

\begin{remark}
From Definition~\ref{def-subdiff}, the following properties hold:
\begin{enumerate}
  \item For any $\bar{\bfx}  \in \dom h $, $\widehat{\partial}h(\bar{\bfx} ) \subseteq \partial h(\bar{\bfx} )$. If $h$ is continuously differentiable at $\bar{\bfx} $, then $\widehat{\partial}h(\bar{\bfx} ) = \partial h(\bar{\bfx} )= \set{\nabla h(\bar{\bfx} )}$;
  \item For any $\bar{\bfx}  \in \dom h $, the subdifferential set $\partial h(\bar{\bfx} )$ is closed, i.e,
\[
\set{\bfv \in\mathbb{R}^{N}:\exists \bfx^{(k)} \to \bar{\bfx}, h(\bfx^{(k)}) \to h(\bar{\bfx}), \bfv^{(k)}\in\partial h(\bfx^{(k)}), \bfv^{(k)} \to \bfv }\subset \partial h(\bar{\bfx} ).
\]
\end{enumerate}
\end{remark}

\begin{definition}[Horizon cone]\label{def-horizon-cone}
For a set $C \subset \mathbb{R}^N $, the \emph{horizon cone} is the closed cone $C^{\infty}$ given by
\[
C^{\infty}=\left\{\begin{array}{ll}
\{\bfv ~|~\exists ~\bfv^{(k)}\in C, \lambda^{(k)}\searrow 0, \lambda^{(k)}\bfv^{(k)}\to \bfv\} &\mbox{when }C\neq \emptyset,\\
\{\bf0\} &\mbox{when } C=\emptyset.
\end{array}\right.
\]
\end{definition}

\begin{remark}
A set $C\subset \mathbb{R}^N$ is bounded if and only if its horizon cone is just the zero cone: $C^{\infty}=\{\bf0\}$.
\end{remark}

Secondly, the Kurdyka-\L ojasiewicz (KL) property~\cite{Lojasiewicz1963Une,Kurdyka1998gradients} is a useful tool for establishing the convergence of bounded sequence. It allows to cover a wide range of problems \cite{Attouch2013Convergence}.

\begin{definition}[Kurdyka-\L ojasiewicz Property]\cite{Attouch2010Proximal} A proper function $h$ is said to have the \emph{Kurdyka-\L ojasiewicz property} at $\bar{\bfx} \in \dom \partial h = \{\bfx \in \mathbb{R}^{\sfN}: \partial h(\bfx) \neq \emptyset\}$ if there exist $\zeta \in (0, +\infty]$, a neighborhood $U$ of $\bar{\bfx}$, and a continuous concave function $\varphi: [0, \zeta) \to \mathbb{R}_{+}$ such that
    \begin{enumerate}
      \item $\varphi(0) = 0$;
      \item $\varphi(0)$ is $C^{1}$ on $(0,\zeta)$;
      \item for all $s \in (0,\zeta)$, $\varphi^{\prime}(s) > 0$;
      \item for all $\bfx \in U$ satisfying $h(\bar{\bfx}) < h(\bfx) < h(\bar{\bfx}) + \zeta$, the Kurdyka-\L ojasiewicz inequality holds:
      \[
        \varphi^{\prime}(h(\bfx) - h(\bar{\bfx})) \dist (0,\partial h(\bfx)) \geq 1.
      \]
      where $\dist (0,\partial h(\bfx)) = \min\{\| \bfv \|: \bfv \in \partial h(\bfx) \}$,
    \end{enumerate}
\end{definition}

A proper, lower semicontinuous function $h$ satisfying the KL property at all points in $\dom \partial h$ is called a \emph{KL function}. One can refer to~\cite{Attouch2013Convergence,Bolte2014Proximal} for examples of KL functions and the application of KL property in optimization theory.

Recently, the KL property has been extended to the definable functions in an o-minimal structure for the nonsmooth version, see~\cite{Kurdyka1998gradients, Dries1996geometric, Attouch2010Proximal, Bolte2007clarke} and the reference therein. The following definitions and theorem are based on them.

\begin{definition}\cite{Attouch2010Proximal}
Let $\mathcal{O}=\{ {\mathcal{O}}_{n} \}_{n\in\mathbb{N}}$ be such that each ${\mathcal{O}}_n$ is a collection of subsets of $\mathbb{R}^n$. The family $\mathcal{O}$ is an {\em o-minimal structure} over $\mathbb{R}$, if it satisfies the following axioms:
\begin{enumerate}
\item Each ${\mathcal{O}}_n$ is a boolean algebra. Namely $\emptyset\in {\cal{O}}_n$ and for each $A,B\in {\mathcal{O}}_n$, $A\cup B, A\cap B$, and $\mathbb{R}^n\backslash A$ belong to ${\mathcal{O}}_n$.
\item For all $A\in {\mathcal{O}}_n$, $A\times \mathbb{R}$ and $\mathbb{R}\times A$ belong to ${\mathcal{O}}_{n+1}$.
\item For all  $A\in {\mathcal{O}}_{n+1}$, $\prod(A):=\{(x_1,\cdots,x_n)\in \mathbb{R}^n| (x_1,\cdots,x_n,x_{n+1})\in A\}$ belongs to $\mathcal{O}_n$.
\item For all $i\neq j$ in $\{1,2,\cdots,n\}$, $\{ (x_1,\cdots,x_n)\in \mathbb{R}^n|x_i= x_j   \}$ belong to $\mathcal{O}_n$.
\item The set $\{ (x_1,x_2)\in \mathbb{R}^2| x_1<x_2   \}$ belongs to $\mathcal{O}_2$.
\item The elements of $\mathcal{O}_1$ are exactly finite unions of intervals.
\end{enumerate}
\end{definition}

\begin{definition}\label{def-definable}\cite{Attouch2010Proximal}
Given an o-minimal structure $\mathcal{O}$ over $\mathbb{R}$. A set $C$ is said to be {\em definable} (in $\mathcal{O}$) if $C$ belongs to $\mathcal{O}$. A function $f:\mathbb{R}^n\to \mathbb{R}\cup \{+\infty\}$ is said to be {\em definable} in $\mathcal{O}$ if its graph belongs to ${\mathcal{O}}_{n+1}$.
\end{definition}

Then the definable function has the following property:
\begin{itemize}
\item finite sums of definable functions are definable;
\item compositions of definable functions are definable;
\item function of $f(y)=\sup_{x\in C} g(x,y)$ is definable if $g(x,y)$ and the set $C$ are definable.
\end{itemize}
As an example~\cite{Dries1996geometric,Attouch2010Proximal}, there exists an o-minimal structure containing the graph of $x^r: \mathbb{R}\to \mathbb{R}, r\in\mathbb{R}$, which is given by
\begin{equation}\label{eq:powerfunction}
a\mapsto \begin{cases}
a^r,&a>0\\
0,&a\leq 0.
\end{cases}
\end{equation}

\begin{theorem}\label{K-L-theorem}\cite{Attouch2010Proximal} Any proper lower semicontinuous function $f: \mathbb{R}^n\to \mathbb{R}\cup\{+\infty\}$ that is definable in an o-minimal structure $\mathcal{O}$ has the Kurdyka-\L ojasiewicz property at each point of $\dom\partial f$.
\end{theorem}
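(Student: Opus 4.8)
This is the nonsmooth Kurdyka-\L ojasiewicz theorem; the plan is to reproduce its classical proof via the \emph{talweg} (valley line) construction, using the closure properties of definable functions recorded above together with the standard structural results of o-minimal geometry (cell decomposition, the monotonicity theorem, definable choice, the curve-selection lemma and the $C^{p}$-stratification theorem for definable functions). Fix $\bar{\bfx}\in\dom\partial f$; after a translation in domain and range assume $\bar{\bfx}=\mathbf{0}$ and $f(\mathbf{0})=0$. Everything below is local, so verifying the Kurdyka-\L ojasiewicz inequality at this single point automatically yields it at every point of $\dom\partial f$. If $f$ is constant near $\mathbf{0}$ the inequality is vacuous, so I assume otherwise and fix a bounded neighbourhood $U$ of $\mathbf{0}$ on which $0\le f<r_{0}$ for some $r_{0}>0$ (to be shrunk finitely often below).

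\emph{Definability of the slope function.} I would first check that
\[
m(t):=\inf\set{\norm{\bfv}:\bfv\in\partial f(\bfx),\ \bfx\in U,\ f(\bfx)=t},\qquad 0<t<r_{0},
\]
is a definable function of $t$: the map $\bfx\mapsto\dist(0,\partial f(\bfx))$ is definable, since the graph of the subdifferential of a definable function (Definition~\ref{def-subdiff}) is cut out by first-order formulas over the graph of $f$ and hence is definable, and $m$ is the infimum of this definable family over the definable set $\set{\bfx\in U:f(\bfx)=t}$. By the monotonicity theorem, after shrinking $r_{0}$ the function $m$ is continuous and monotone on $(0,r_{0})$ and strictly positive there; positivity follows from the curve-selection lemma together with the finiteness of the set of critical values of a definable function near a given value, and is where $\bar{\bfx}\in\dom\partial f$ and the assumed local non-constancy are used. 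By construction $\dist(0,\partial f(\bfx))\ge m(f(\bfx))$ for all $\bfx\in U$.

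\emph{The talweg estimate.} The heart of the proof is the finiteness of $\int_{0}^{r_{0}}m(t)^{-1}\,dt$. By definable choice there is a definable curve $\theta:(0,r_{0})\to U$ with $f(\theta(t))=t$ and $\dist(0,\partial f(\theta(t)))\le 2\,m(t)$. Stratifying $f$ compatibly with the image of $\theta$, so that on each stratum $f$ is $C^{1}$ and $\theta$ runs inside one stratum, and using the identity linking $\partial f$ to the Riemannian gradient of $f$ restricted to that stratum, differentiation of $f(\theta(t))=t$ gives, at points where $\theta$ is differentiable, $1=\frac{d}{dt}f(\theta(t))\le\dist(0,\partial f(\theta(t)))\,\norm{\theta'(t)}\le 2\,m(t)\,\norm{\theta'(t)}$, hence $\norm{\theta'(t)}\ge\frac{1}{2m(t)}$. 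As $\theta$ is a bounded definable curve it is rectifiable, so $\int_{0}^{r_{0}}m(t)^{-1}\,dt\le 2\int_{0}^{r_{0}}\norm{\theta'(t)}\,dt<\infty$. Putting $\varphi(s):=\int_{0}^{s}m(t)^{-1}\,dt$, the function $\varphi$ is continuous on $[0,r_{0})$ with $\varphi(0)=0$, is $C^{1}$ on $(0,r_{0})$ with $\varphi'(s)=1/m(s)>0$, and — shrinking $r_{0}$ once more so that $m$ is nondecreasing — is concave. Then for every $\bfx\in U$ with $0<f(\bfx)<r_{0}$,
\[
\varphi'(f(\bfx))\,\dist(0,\partial f(\bfx))\ \ge\ \frac{1}{m(f(\bfx))}\cdot m(f(\bfx))\ =\ 1 ,
\]
which is exactly the Kurdyka-\L ojasiewicz inequality, with neighbourhood $U$ and $\zeta:=r_{0}$.

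\emph{Main obstacle.} In the smooth case ($f\in C^{1}$) the argument is clean: definability of $m$ via the monotonicity theorem, the one-line talweg inequality, and rectifiability of bounded definable curves do everything. The real work is the nonsmooth bookkeeping — giving meaning to ``the derivative of $f$ along the talweg'' when $f$ is merely lower semicontinuous — which forces the $C^{p}$-stratification theorem for definable functions and the projection/chain-rule identity relating the limiting subdifferential $\partial f$ to the gradient of $f$ along strata. I would also dispatch the degenerate configurations separately ($f$ locally constant near $\bar{\bfx}$, or $\bar{\bfx}$ interior to a level set of $f$) and be explicit that, although these o-minimal tools are standard, they are not developed in this paper and in a self-contained treatment would have to be quoted precisely from the o-minimal geometry literature.
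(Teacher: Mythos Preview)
The paper does not prove this theorem: it is quoted as a cited result from \cite{Attouch2010Proximal} (note the citation attached to the theorem label) and used as a black-box ingredient in the convergence analysis of Section~\ref{sec:con-ana}. There is no ``paper's own proof'' to compare against.

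Your proposal goes well beyond what the paper does, outlining the full \emph{talweg} argument that underlies the theorem in the cited source (and in the related Bolte--Daniilidis--Lewis--Shiota work). As a sketch of that standard proof it is broadly faithful: definability of the slope function $m$ via the first-order description of $\partial f$, the talweg curve via definable choice, rectifiability of bounded definable curves to bound $\int_{0}^{r_{0}} m(t)^{-1}\,dt$, and the stratification machinery needed to make sense of the chain rule in the nonsmooth setting. One small caveat worth flagging if you ever write this out in full: the monotonicity theorem only guarantees that $m$ is \emph{monotone} on a subinterval, not specifically nondecreasing, so ``shrinking $r_{0}$ once more so that $m$ is nondecreasing'' is not automatic; in the literature concavity of $\varphi$ is secured either by arguing that $m(t)\to 0$ as $t\to 0^{+}$ at a genuine critical value (forcing the right direction), or by passing to a concave definable majorant of $\varphi$. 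This is a fixable detail, not a structural gap. But for the present paper none of this is needed --- the theorem is simply invoked.
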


From this theorem and Definition \ref{def-definable}, the objective function $\mathcal{E}$ in this paper is the compositions of definable functions. So it satisfies the KL property.

The following theorem gives a general and important theoretical framework for the convergence of sequence. It has extensive applications recently~\cite{Attouch2013Convergence,Bolte2014Proximal}.

\begin{theorem}\label{thm-convergence} \cite{Attouch2013Convergence,Bolte2014Proximal} Let $f: \mathbb{R}^n\to \mathbb{R}\cup\{+\infty\}$ be a proper lower semicontinous function. Consider a sequence $\{ \bfx^{(l)} \}$ that satisfies

(H1). (Sufficient decrease condition). For each $l$,
\[
f(\bfx^{(l+1)})+a\|\bfx^{(l+1)}-\bfx^{(l)}\|^2\leq f(\bfx^{(l)});
\]

(H2). (Relative error condition). For each $l$, there exists $w^{(l+1)}\in\partial f(\bfx^{(l+1)})$ such that
\[
\|w^{(l+1)}\|\leq b\|\bfx^{(l+1)}-\bfx^{(l)}\|;
\]

(H3). (Continuity condition). There exists a subsequence $\{ \bfx^{(k_l)} \}$ and $\widetilde{\bfx}$ such that
\[
\bfx^{(k_l)} \to \widetilde{\bfx} \mbox{ and } f( \bfx^{(k_l)} )\to f(\widetilde{\bfx}),  \mbox{~ as }j\to \infty.
\]

If $f$ has the KL property at the cluster point $\widetilde{\bfx}$ specified in (H3), then the sequence $\{ \bfx^{(l)} \}$ converges to $\bar{\bfx}=\tilde{\bfx}$ as $l\to \infty$ and $\bar{\bfx}$ is a critical point.

\end{theorem}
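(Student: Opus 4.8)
The plan is to obtain Theorem~\ref{thm-global-convergence} as a direct consequence of the abstract framework in Theorem~\ref{thm-convergence}, after checking its three structural hypotheses for the tail of the iteration together with the KL property of $\calE$. Almost all of the genuine work has already been packaged into the preceding lemmas, so the proof reduces to assembling them in the right order.

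First I would extract a convergent subsequence: by Lemma~\ref{lem-bound-sufficient-decrease}(ii) the sequence $\set{\bfx^{(l)}}$ is bounded, so some subsequence $\bfx^{(k_l)}$ converges to a point $\bfx^{\ast}$; since $\calE$ is continuous (as recorded after \eqref{eq:object-func}), $\calE(\bfx^{(k_l)}) \to \calE(\bfx^{\ast})$, which is precisely the continuity condition \textit{(H3)}. Next, Lemma~\ref{lem-bound-sufficient-decrease}(i) gives the sufficient decrease \textit{(H1)} with constant $a = \tfrac{\beta}{2}(1-\ve) > 0$. For the relative error condition \textit{(H2)} I would invoke Lemma~\ref{lem-finite-num-iter} to fix an index $L$ beyond which $\sfS^{(l)} \equiv \sfS^{(L)}$, use Lemma~\ref{lem-bound-theory} to secure the uniform positive lower bound on the nonzero groups (hence the local Lipschitz estimate \eqref{eq:grad-Lip-cond-K}), and then Lemma~\ref{lem-relative-error-condition} to produce, for every $l \geq L$, an element $\bfv^{(l+1)} \in \partial \calE(\bfx^{(l+1)})$ with $\norm{\bfv^{(l+1)}}_2 \leq \widetilde{C}\norm{\bfx^{(l+1)} - \bfx^{(l)}}_2$. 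Because the conclusion of Theorem~\ref{thm-convergence} is insensitive to discarding finitely many initial terms, it is enough to apply it to the shifted sequence $\set{\bfx^{(l)}}_{l \geq L}$, for which \textit{(H1)} and \textit{(H2)} hold at every step.

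It then remains to supply the KL property at the cluster point. By Definition~\ref{def-definable} and Theorem~\ref{K-L-theorem} in the appendix, any function definable in an o-minimal structure has the KL property on $\dom\partial$; the objective $\calE$ in \eqref{eq:object-func} is built from the power map $t \mapsto t^q$ (definable by \eqref{eq:powerfunction}), the $\ell_p$ norms, the absolute value and linear maps, and finite sums and compositions of definable functions are definable, so $\calE$ is a KL function. The cluster point $\bfx^{\ast}$ lies in $\dom\partial\calE$ because $\partial\calE(\bfx)$ is nonempty at every $\bfx$ by \eqref{eq:obj-func-subdiff-p2}--\eqref{eq:subdiff-first-term} together with Lemma~\ref{lem-subdifferential}. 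Applying Theorem~\ref{thm-convergence} then yields that the whole sequence $\set{\bfx^{(l)}}$ converges to $\bfx^{\ast}$ and that $\bf0 \in \partial\calE(\bfx^{\ast})$, which by \eqref{eq:first-opt-cond} is exactly the assertion that $\bfx^{\ast}$ is a stationary point of \eqref{eq:lp-mini-prob}.

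The main obstacle is not in this assembly step: it has already been overcome inside Lemmas~\ref{lem-bound-theory} and~\ref{lem-relative-error-condition}, namely the non-Lipschitz behaviour of $\norm{\cdot}_{p,q}^q$ near zero, which forces the lower bound argument, and the delicate construction of a genuine subgradient of $\calE$ (rather than of the surrogate $\calE^{(l)}$), in particular the choice of the free entries $\psi_{\sfi,j}$ when $p=1$. At the level of the present theorem the only points requiring care are the legitimacy of applying the abstract result to the tail $l \geq L$ of the sequence, and the verification that $\bfx^{\ast} \in \dom\partial\calE$ so that the KL inequality is available there.
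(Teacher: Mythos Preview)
You have written a proof of the wrong statement. The theorem in question is Theorem~\ref{thm-convergence}, the abstract KL-convergence result quoted from \cite{Attouch2013Convergence,Bolte2014Proximal} in the appendix; the paper does not prove this theorem at all --- it is cited as an external tool and stated without proof. Your proposal is instead a proof of Theorem~\ref{thm-global-convergence}, the paper's own global convergence result for {\rm InISSAPL}, which \emph{applies} Theorem~\ref{thm-convergence}.

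If we set that confusion aside and compare your argument to the paper's proof of Theorem~\ref{thm-global-convergence}, they are essentially the same: both extract a convergent subsequence from boundedness and continuity to get \textit{(H3)}, read \textit{(H1)} off Lemma~\ref{lem-bound-sufficient-decrease}, obtain \textit{(H2)} from Lemma~\ref{lem-relative-error-condition} (for $l\ge L$), verify the KL property via definability, and then invoke Theorem~\ref{thm-convergence}. Your write-up is more explicit about the tail shift to $l\ge L$ and about $\bfx^\ast\in\dom\partial\calE$, but there is no methodological difference. (One small citation slip: boundedness of $\set{\bfx^{(l)}}$ is Lemma~\ref{lem-bound-sufficient-decrease}(ii); the paper's reference to Lemma~\ref{lem-bound-theory} in its proof is itself a typo.)
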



\bibliography{lpq_lr_group_ref}

\end{document}